\newtheorem{theorem}{Theorem}[section]
\newtheorem{lemma}[theorem]{Lemma}
\newtheorem{corollary}[theorem]{Corollary}
\newtheorem{proposition}[theorem]{Proposition}
\newtheorem{claim}[theorem]{Claim}
\theoremstyle{remark}
\theoremstyle{definition}
\newtheorem{problem}[theorem]{Problem}
\def\env@sqcases{%
  \let\@ifnextchar\new@ifnextchar
  \left\lbrack
  \def\arraystretch{1.2}%
  \array{@{}l@{\quad}l@{}}%
}
\DeclareMathOperator{\reg}{reg}
\DeclareMathOperator{\lcm}{lcm}
\DeclareMathOperator{\pd}{pd}
\DeclareMathOperator{\supp}{supp}
\DeclareMathOperator{\mingens}{Mingens}
\newcommand{\Kfoursymb}[1][1]{%
    \begin{tikzpicture}[scale=#1, %
    baseline=-0.3ex,
    thick,
    ]
    \def\x{2.5mm};
    \def\r{0.2mm};
    \draw (0,0) -- (\x, 0) -- (\x, \x) -- (0, \x) -- cycle;
    \draw (0, 0) -- (\x, \x);
    \draw (\x, 0) -- (0, \x);

    \draw[fill=black] (0,0) circle (\r);
    \draw[fill=black] (0,\x) circle (\r);
    \draw[fill=black] (\x,0) circle (\r);
    \draw[fill=black] (\x,\x) circle (\r);
    \end{tikzpicture}%
}
\newcommand{\diamondsymb}[1][1]{%
    \begin{tikzpicture}[scale=#1, %
    baseline=-0.3ex,
    thick,
    ]
    \def\x{2.5mm};
    \def\r{0.11mm};
    \draw (0,0) -- (\x, 0) -- (\x, \x) -- (0, \x) -- cycle;
    \draw (0, 0) -- (\x, \x);

    \draw[fill=black] (0,0) circle (\r);
    \draw[fill=black] (0,\x) circle (\r);
    \draw[fill=black] (\x,0) circle (\r);
    \draw[fill=black] (\x,\x) circle (\r);
    \end{tikzpicture}%
}
\newcommand{\pawsymb}[1][1]{%
    \begin{tikzpicture}[scale=#1, %
    thick,
    baseline=-1ex
    ]
    \def\x{2.5mm};
    \def\r{0.11mm};
    \draw (\x/2, \x/2) -- (\x, 0) -- (\x/2, -\x/2) -- cycle;
    \draw (\x,0) -- (3*\x/2,0);

    \draw[fill=black] (\x/2, \x/2) circle (\r);
    \draw[fill=black] (\x, 0) circle (\r);
    \draw[fill=black] (\x/2, -\x/2) circle (\r);
    \draw[fill=black] (3*\x/2,0) circle (\r);
    \end{tikzpicture}%
}
\newcommand{\clawsymb}[1][1]{%
    \begin{tikzpicture}[scale=#1, %
    thick,
    baseline=-1ex
    ]
    \def\x{2.5mm};
    \def\r{0.11mm};
    \draw (\x/2, \x/2) -- (\x, 0) -- (\x/2, -\x/2) ;
    \draw (\x,0) -- (3*\x/2,0);

    \draw[fill=black] (\x/2, \x/2) circle (\r);
    \draw[fill=black] (\x, 0) circle (\r);
    \draw[fill=black] (\x/2, -\x/2) circle (\r);
    \draw[fill=black] (3*\x/2,0) circle (\r);
    \end{tikzpicture}%
}
\def\x{{\bf x}}
\def\1{{\bf 1}}
\def\0{{\bf 0}}
\def\r{{\mathbf r}^{\mathbb L}}
\begin{document}
\title[Scarfness of some ideals associated to graphs]{\textbf{The Scarf complex of squarefree powers, symbolic powers of edge ideals, and cover ideals of graphs}}
\author{Trung Chau}
\address{Chennai Mathematical Institute, Siruseri, Tamil Nadu, India}
\email{chauchitrung1996@gmail.com}

\author{Nursel Erey}
\address{Gebze Technical University, Department of Mathematics, Gebze, 41400 Kocaeli,
Turkey}
\email{nurselerey@gtu.edu.tr}

\author{Aryaman Maithani}
\address{Department of Mathematics, University of Utah, 155 South 1400 East, Salt Lake City, UT~84112, USA}
\email{maithani@math.utah.edu}

\keywords{cellular resolution, cover ideal, edge ideal, free resolution, monomial ideal, Scarf complex,  simplicial resolution,  squarefree power, symbolic power}

\subjclass[2020]{13D02; 13F55; 05C65; 05C75; 05E40}

\begin{abstract}
     Every monomial ideal $I$ has a Scarf complex, which is a subcomplex of its minimal free resolution. We say that $I$ is Scarf if its Scarf complex is also its minimal free resolution. In this paper, we fully characterize all pairs $(G,n)$ of a graph $G$ and an integer $n$ such that the squarefree power $I(G)^{[n]}$ or the symbolic power $I(G)^{(n)}$ of the edge ideal $I(G)$ is Scarf. We also determine all graphs $G$ such that its cover ideal $J(G)$ is Scarf, with an explicit description when $G$ is either chordal or bipartite.
\end{abstract}

\maketitle

\section{Introduction}

The study of homological invariants, e.g., Betti numbers and (Castelnuovo-Mumford) regularity,  of monomial ideals have played a central role in Commutative Algebra over the past few decades due to its natural connection to combinatorial objects, particularly graphs. Let $G=(V(G),E(G))$ be a finite simple graph. The \emph{edge ideal} $I(G)$ of $G$ is an ideal of the polynomial ring $S=\Bbbk[V(G)]$ and is generated by the monomials $xy$ where $\{x,y\}\in E(G)$. The \emph{cover ideal} of $G$, denoted by $J(G)$, is defined as the ideal generated by the monomials $\prod_{x\in W} x$ where $W$ is a vertex cover of $G$.
Edge ideals and cover ideals have been studied intensively due to their simple constructions and direct connection to Graph Theory and Combinatorics. We refer to \cite{MV-survey} and \cite{VanTuyl} for some surveys on the subject.

Homological invariants of monomial ideals can be read off their minimal free resolutions. Unfortunately, the study of minimal free resolutions, or one can say the study of syzygies, is notoriously difficult in general. In the case of monomial ideals, there are many general constructions for free resolutions (cf. \cite{AFG2020, BW02, CK24, CT2016, Ly88, Nov00, OY2015, Tay66}), albeit they are not always minimal. Bayer, Peeva, and Sturmfels \cite{BPS98} constructed \emph{Scarf complex} of a given monomial ideal, inspired by the work of Herbert Scarf in mathematical economics \cite{Sca73}. It is known that for a monomial ideal $I$, its Scarf complex is a subcomplex of its minimal free resolution and thus serves as a lower bound for it. The ideal $I$ is called \emph{Scarf} if this lower bound is achieved. Bayer, Peeva, and Sturmfels in \cite{BPS98} proved that generic monomial ideals are Scarf, and most monomial ideals are generic. However, some of the most interesting monomial ideals, e.g., squarefree monomial ideals, or their powers, are rarely generic. Thus, a classification of when these ideals are Scarf remains a mystery. Scarf ideals have been studied from different perspectives since \cite{BS98,Yu99} and made a recent comeback with some articles on extremal ideals \cite{extremal3, ELKHOURY2024107577} which are conjectured to be Scarf.

Regarding edge ideals and their powers, a full characterization of Scarfness of $I(G)^n$ was recently given in \cite{FHHM24} for any connected graph $G$ and any positive integer $n$. The goal of this paper is to expand the investigation of Scarfness to symbolic powers and squarefree powers of edge ideals as well as cover ideals of graphs. We remark that in this article, we will only consider graphs with no isolated vertices, as isolated vertices do not contribute to edge ideals or cover ideals. Before stating our main results, we refer to Section~\ref{sec:preliminaries} for unexplained terminology.   

In Section~\ref{sec:squarefree powers}, we study the Scarf property of squarefree powers of edge ideals and give a complete characterization of such a property:

\begin{theorem}[{Theorem \ref{thm:Scarf-squarefree-powers}}]
Let $G$ be a graph, and let $2\leq n\leq m(G)$ where $m(G)$ denotes the matching number of $G$. Then $I(G)^{[n]}$ is Scarf if and only if one of the following holds:
    \begin{enumerate}
        \item $G$ has a perfect matching of size $n$.
        \item The vertex set of $G$ can be labelled as $V(G)=\{x_1,\dots, x_{2n},y_1,\dots y_\ell\}$ for some $\ell\geq 1$ such that \[\{x_1x_2, x_3x_4, \dots, x_{2n-3}x_{2n-2}, x_{2n-1}x_{2n}\}\] is a matching of $G$, and $x_{2n},y_1,\dots ,y_\ell$ are leaves of $G$ with the common joint $x_{2n-1}$.
        \end{enumerate}
\end{theorem}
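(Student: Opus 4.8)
The plan is to translate the statement into a purely combinatorial condition on a set system and then treat the two implications separately. First I would record the basic reduction. Since every minimal generator of $I(G)^{[n]}$ is a squarefree monomial of degree exactly $2n$, the minimal generators are in bijection with the $2n$-subsets $W\subseteq V(G)$ for which the induced subgraph $G[W]$ has a perfect matching (call such $W$ an \emph{$n$-matchable} set), the associated generator being $x_W:=\prod_{v\in W}v$. Crucially, for any family $W_1,\dots,W_k$ of matchable sets one has $\lcm(x_{W_1},\dots,x_{W_k})=x_{W_1\cup\cdots\cup W_k}$, so the entire lcm-lattice, and hence the Scarf complex, is governed by unions of matchable sets. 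All subsequent arguments take place in this model.

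For the backward direction, case (1) is immediate: if $G$ has a perfect matching of size $n$ then $|V(G)|=2n$, the only $n$-matchable set is $V(G)$ itself, and $I(G)^{[n]}$ is the principal ideal $(\prod_v v)$, which is trivially Scarf. For case (2) I would first compute the generators. Because the leaves $x_{2n},y_1,\dots,y_\ell$ share the single neighbour $x_{2n-1}$, any matchable $W$ can contain at most one of them, and must then contain $x_{2n-1}$; counting vertices forces $W=\{x_1,\dots,x_{2n-1},z\}$ for a unique leaf $z$. Hence $I(G)^{[n]}=(x_1\cdots x_{2n-1})\cdot(x_{2n},y_1,\dots,y_\ell)$, namely a single squarefree monomial times an ideal generated by distinct variables. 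For such an ideal every subfamily of generators has a distinct lcm, so its Scarf complex is the full simplex and coincides with the (Koszul, i.e.\ Taylor) minimal free resolution; thus $I(G)^{[n]}$ is Scarf.

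The forward direction is where the real work lies, and I would argue by contraposition using a local non-Scarfness criterion at degree $2n+1$. For a $(2n+1)$-subset $U\subseteq V(G)$, set $t(U):=\#\{v\in U: U\setminus v \text{ is }n\text{-matchable}\}$. If $t(U)\geq 3$, choose three such vertices $v_1,v_2,v_3$ and put $W_i=U\setminus v_i$; then $W_i\cup W_j=U$ for all $i\neq j$, so any two distinct matchable sets inside $U$ already have union $U$, and the open interval below $x_U$ in the lcm-lattice is precisely the antichain of atoms $x_{W_1},\dots,x_{W_{t(U)}}$. By the Gasharov--Peeva--Welker formula this gives $\beta_{2,x_U}(S/I(G)^{[n]})=t(U)-1\geq 2$, whereas the Scarf complex has no face of multidegree $x_U$ at all (every subfamily with union $U$ has size at least $2$, and there are several such, so none is unique). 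Hence $I(G)^{[n]}$ is not Scarf, and Scarfness forces $t(U)\leq 2$ for every $(2n+1)$-set $U$.

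It remains to show that this constraint, together with the analogous higher-degree obstructions, forces $G$ into case (1) or (2), and this combinatorial deduction is the step I expect to be the main obstacle. I would first rule out $n<m(G)$: a matching of size $n+1$ already produces several matchable sets with a common union, and the same antichain computation (now at multidegree of size $2n+2$, where no intermediate union interposes) detects a surviving Betti number, so Scarfness gives $m(G)=n$. Assuming then $m(G)=n$ and, if we are not in case (1), that $|V(G)|\geq 2n+1$, I would fix a maximum matching and analyse how the near-perfect matchings of the various $(2n+1)$-sets can vary; the bound $t(U)\leq 2$ sharply limits this flexibility and should force all the ``excess'' vertices to be leaves attached to one common vertex, producing exactly the star configuration of case (2). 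The delicate points are controlling alternating and augmenting paths so that the flexibility cannot be split between two distinct vertices, and checking that no higher-degree lcm-lattice homology survives in the remaining configurations; these constitute the crux of the argument.
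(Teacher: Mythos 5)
Your backward direction is correct and matches the paper's argument (case (1) yields a principal ideal; case (2) yields a monomial times an ideal generated by variables, hence a Taylor ideal, hence Scarf), and your criterion that Scarfness forces $t(U)\le 2$ for every $(2n+1)$-subset $U$ is also sound---it is precisely the paper's Lemma~\ref{lem:Scarf-2-gens} combined with the restriction lemma (Lemma~\ref{lem:HHZ-Scarf-squarefreepower}). But the forward direction, which you yourself defer as ``the crux,'' is where the theorem actually lives, and both concrete claims you make toward it fail. For the matching-number step, your antichain computation at a multidegree $x_U$ with $|U|=2n+2$ rests on the parenthetical ``no intermediate union interposes,'' which is false in general: the induced subgraph on the $2n+2$ vertices of an $(n+1)$-matching may carry additional edges, so generators other than the complements $U\setminus e_i$ divide $x_U$, and their pairwise lcms produce elements of degree $2n+1$ strictly inside the open interval. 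For the path on six vertices with $n=2$, the open interval below $x_1\cdots x_6$ is connected, so $\widetilde{H}_0$ vanishes and your claimed Betti number disappears. The paper's proof of this step (Lemma~\ref{lem:matching-number-Erey}) is not a local lcm-lattice computation but a global one: it combines the Scarf edge lemma, restriction to $(2n+1)$-vertex induced subgraphs, and a parity-plus-connectivity argument (every generator meets a certain $4$-set of vertices in either $2$ or $4$ vertices, and this count cannot change along a path of Scarf edges, contradicting the acyclicity that Scarfness would force).

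Second, and more fundamentally, the constraint $t(U)\le 2$ for all $(2n+1)$-sets cannot ``force the star configuration,'' even granting $m(G)=n$. The paper's forbidden structure (Lemma~\ref{lem:forbidden-squarefree})---a matching $x_1x_2,\dots,x_{2n-5}x_{2n-4}$ together with two disjoint two-leaf stars centered at $x_{2n-3}$ and $x_{2n-2}$---is a counterexample: its four generators all contain every $x$-vertex, so every $(2n+1)$-set $U$ satisfies $t(U)\in\{0,2\}$, yet $I(G)^{[n]}$ equals a monomial times the edge ideal of a $4$-cycle and is not Scarf by \cite{FHHM24} and Lemma~\ref{lem:multiply-by-a-monomial}. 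So the ``flexibility split between two distinct vertices'' that you flag as delicate genuinely occurs within your constraints, and ruling it out requires an obstruction supported on $2n+2$ vertices that your framework never produces; this is exactly what the paper's Lemma~\ref{lem:forbidden-squarefree} supplies, and only with it does the final induction pinning all excess vertices to a single common joint go through.
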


Section~\ref{sec:symbolic powers} is devoted to symbolic powers of edge ideals. In Theorem~\ref{thm:HHZ-Scarf-symbolicpowers} we show that when $H$ is an induced subgraph of $G$, the symbolic powers of $I(H)$ can be obtained from those of $I(G)$ by restriction. With this new technique, we characterize symbolic powers of edge ideals that are Scarf:

\begin{theorem}[{Theorem \ref{thm:Scarf-symbolic-powers}}]
    Let $n\geq 2$ be an integer and let $G$ be a graph. Then $I(G)^{(n)}$ is Scarf if and only if one of the following holds:
    \begin{enumerate}
        \item $G$ is a path of length 1 or 2.
        \item $G$ is a triangle and $n$ is even.
        \item $G$ is a disjoint union of two edges.
    \end{enumerate}
\end{theorem}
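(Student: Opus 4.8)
The plan is to prove both directions by reducing to a finite list of small graphs, using the restriction principle of Theorem~\ref{thm:HHZ-Scarf-symbolicpowers} to control induced subgraphs. The first step is to record the heredity statement that I expect to extract from that theorem: if $H$ is an induced subgraph of $G$ and $I(G)^{(n)}$ is Scarf, then $I(H)^{(n)}$ is Scarf. Since $I(H)^{(n)}$ is the restriction of $I(G)^{(n)}$, the lcm-lattice of the former should embed as a sublattice of the latter compatibly with Scarf multidegrees, so that the Scarf complex and the Betti numbers are inherited. Granting this, any graph containing an induced subgraph whose symbolic power fails to be Scarf is itself non-Scarf; this is the engine of the necessity direction.

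For sufficiency I would treat the four allowed cases directly. For $G=K_2$ the ideal $I(G)^{(n)}=(x^ny^n)$ is principal and trivially Scarf. For $G=P_3$ with path $x-y-z$ one computes $I(G)^{(n)}=y^n\,(x,z)^n$, and for $G=2K_2$ with edges $u=x_1y_1$, $v=x_2y_2$ one gets generators $\{u^iv^{n-i}:0\le i\le n\}$; in both cases the lcm-lattice is isomorphic to that of $(s,t)^n$ in two variables, whose Scarf complex is the path on its $n+1$ generators and already equals its minimal free resolution (multiplying all generators by the fixed monomial $y^n$ does not change the lcm-lattice). The substantive case is $G=K_3$ with $n=2k$, where $I(G)^{(n)}$ is spanned by the monomials $x^ay^bz^c$ with all pairwise sums at least $n$; here I would write the minimal resolution explicitly and match it with the Scarf complex. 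The case $n=2$, with generators $xyz,x^2y^2,x^2z^2,y^2z^2$, illustrates the mechanism: all Taylor syzygies of top multidegree $x^2y^2z^2$ are consequences of the three syzygies involving $xyz$, so $\beta_1=3=\beta_1^{\mathrm{Scarf}}$ and $\beta_2=0$. The central generator $(k,k,k)$, which exists precisely because $n$ is even, is what forces these higher syzygies to be redundant, and I expect the general even case to follow this pattern.

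For necessity I would assemble the forbidden configurations. On the connected side, every connected graph on at least four vertices has a connected induced subgraph on exactly four vertices (a spanning tree contains a subtree on four vertices, and the induced subgraph of $G$ on those vertices is connected), hence contains one of the six connected four-vertex graphs $P_4$, $C_4$, the claw \clawsymb, the paw \pawsymb, the diamond \diamondsymb, and $K_4$ \Kfoursymb. I would show that each of these, and $K_3$ when $n$ is odd, has a non-Scarf symbolic power, typically by exhibiting a multidegree at which the Betti number strictly exceeds the number of Scarf faces; for $K_3$ with $n$ odd the obstruction is exactly the absence of the central generator $(n/2,n/2,n/2)$, which is what made the reductions above possible. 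Combined with the heredity statement, this forces a Scarf connected $G$ to have at most three vertices, i.e.\ to be $K_2$, $P_3$, or $K_3$ with $n$ even.

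The disconnected case needs a separate argument, since Scarfness is not inherited by disjoint unions. Writing $G=H_1\sqcup\cdots\sqcup H_k$ with each $H_i$ containing an edge, I would use that one edge from each of three components induces a copy of $3K_2$; as the lcm-lattice of $I(3K_2)^{(n)}$ is isomorphic to that of $(s,t,w)^n$ in three variables, which is not Scarf for $n\ge2$, any $G$ with at least three components is excluded. For $k=2$, a component on at least four vertices returns us to the connected forbidden list, while a component equal to $P_3$ or $K_3$ yields an induced $P_3\sqcup K_2$ or $K_3\sqcup K_2$, and I would verify directly that these two graphs are non-Scarf for all $n\ge2$; the only remaining possibility is $G=2K_2$. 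The main obstacle throughout is the explicit homological analysis of symbolic powers, which, unlike ordinary powers, lack a transparent generating set: the most delicate points are the uniform treatment of $K_3$ across all even $n$ and the sharp parity dichotomy, both of which rest on identifying exactly which Taylor syzygies survive in the minimal resolution.
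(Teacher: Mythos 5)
Your skeleton --- heredity via restriction (Theorem~\ref{thm:HHZ-Scarf-symbolicpowers}), a forbidden list of small induced subgraphs for necessity, and direct verification of the allowed cases --- is the paper's strategy, and your reduction of the connected case to the six connected four-vertex graphs is a clean variant of the paper's argument that excluding the claw, paw, diamond and $K_4$ forces maximum degree at most two. But at the two genuinely hard points the proposal stops at a plan. For the triangle with $n$ even you verify only $n=2$ and write that you ``expect the general even case to follow this pattern''; that expectation is the theorem. The paper's route avoids computing any resolution: it first determines the generators of $I(C_3)^{(n)}$ to be $x^{n-i}y^{n-i}z^i$ and its permutations with $0\le i\le n/2$ (Claim~\ref{clm:generators-C3}), then verifies the genericity criterion of \cite{MSY00} (Lemma~\ref{lem:generic-ideals}): any two generators sharing a positive degree in some variable admit the central generator $(xyz)^{n/2}$ as the required witness $m''$, so the ideal is generic and hence Scarf. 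You correctly spotted that the central generator is the mechanism, but genericity --- available to you as a black box --- is what converts that observation into a proof; ``identifying exactly which Taylor syzygies survive'' for all even $n$ is much harder and is never carried out. Similarly, for $n$ odd you name the obstruction (absence of the central generator) but give no argument; the paper restricts to $m=(xyz)^{(n+1)/2}$, computes $\big(I(C_3)^{(n)}\big)^{\le m}=(xyz)^{(n-1)/2}I(C_3)$, and invokes Lemma~\ref{lem:multiply-by-a-monomial}.

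Second, your forbidden list is asserted, not established. For the claw, $P_4$ and $C_4$ one can quote $I^{(n)}=I^n$ for bipartite graphs (Lemma~\ref{lem:bipartite-symbolic}) together with the classification of Scarf ordinary powers of connected graphs in \cite{FHHM24} --- though you never say this --- but for the paw, the diamond, $K_4$, and $K_3\sqcup K_2$ there is no such shortcut: these are non-bipartite, and ``exhibiting a multidegree at which the Betti number strictly exceeds the number of Scarf faces'' is exactly the computation that must be done. The paper does it (Proposition~\ref{prop:forbidden-symbolic}) by choosing for each graph a monomial $m$, pushing the restriction through the primary decomposition (Lemma~\ref{lem:HHZ-intersection}), and landing on a monomial multiple of a visibly non-Scarf ideal such as $(yz,wxy,wxz)$ or $(wxy,wxz,wyz,xyz)$; nothing in your proposal substitutes for these computations. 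Two smaller issues: $P_3\sqcup K_2$ is disconnected, so the connected-graph result of \cite{FHHM24} does not apply to it even after the bipartite reduction, and you leave it as ``verify directly''; and your lcm-lattice transfers (for $2K_2$, $3K_2$, and $(s,t,w)^n$) implicitly use that Scarfness depends only on the lcm lattice, a true fact but not one available in the paper --- the in-paper alternatives are genericity for $2K_2$, and for $3K_2$ a restriction of $(s,t,w)^n$ to $s^{n-1}tw$, which yields $s^{n-2}(st,sw,tw)$ and is visibly non-Scarf.
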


In Section~\ref{sec:cover ideals}, we solve the Scarfness problem for cover ideals of graphs. Surprisingly, in contrast to edge ideals, we find an infinite family of graphs whose cover ideals are Scarf. Our main theorem of the last section is stated as follows:

\begin{theorem}[{Theorem~\ref{thm:Scarf-cover-ideals}}]
    Let $G$ be a graph, and let $n$ denote its number of minimal vertex covers. Then the following statements are equivalent.
    \begin{enumerate}
        \item The cover ideal $J(G)$ is Scarf.
        \item $G$ is co-chordal and there are at least $n-1$ pairs of minimal vertex covers $(V_1,V_2)$ such that  the only minimal vertex covers of $G$ within $V_1\cup V_2$ are $V_1$ and $V_2$ themselves.
    \end{enumerate}
\end{theorem}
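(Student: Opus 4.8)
The plan is to translate Scarfness into the combinatorics of unions of minimal vertex covers, to pin down the shape of the minimal free resolution using co-chordality, and then to match Betti numbers. Write $V_1,\dots,V_n$ for the minimal vertex covers of $G$, so the minimal generators of $J(G)$ are the squarefree monomials $x^{V_i}=\prod_{v\in V_i}v$ and, for any $\sigma\subseteq[n]$, $\lcm(x^{V_i}:i\in\sigma)=x^{W}$ with $W=\bigcup_{i\in\sigma}V_i$. Thus the entire lcm-structure is governed by unions of covers, and $\sigma$ is a face of the Scarf complex exactly when $\bigcup_{i\in\sigma}V_i$ is the union of no other subset. The first thing I would record is the dictionary at the level of edges: a pair $\{i,j\}$ is a Scarf edge if and only if the only minimal vertex covers contained in $V_i\cup V_j$ are $V_i$ and $V_j$. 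Indeed, if some third cover $V_k\subseteq V_i\cup V_j$ existed then $\{i,j\}$ and $\{i,j,k\}$ would share the union $V_i\cup V_j$; conversely, if no such $V_k$ exists, then any $\tau$ with $\bigcup_{\ell\in\tau}V_\ell=V_i\cup V_j$ has each $V_\ell\subseteq V_i\cup V_j$, hence $\tau\subseteq\{i,j\}$, and the antichain property forces $\tau=\{i,j\}$. So the ``good pairs'' in the statement are precisely the edges of the Scarf complex, and their number is the Scarf Betti number in homological degree $1$.

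The structural input is Terai's duality together with Fröberg's theorem. Since $J(G)=I(G)^{\vee}$, we have $\pd(S/J(G))=\reg(I(G))$, and $\reg(I(G))=2$ exactly when $G$ is co-chordal. Hence $G$ is co-chordal if and only if $\pd J(G)=1$, in which case the minimal free resolution of $J(G)$ has the form $0\to F_1\to F_0\to J(G)\to 0$; comparing ranks and using that $J(G)$ has rank $1$ gives $\beta_1(J(G))=\beta_0(J(G))-1=n-1$ and $\beta_i(J(G))=0$ for $i\ge 2$. This is the engine behind the number $n-1$ in the statement.

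For the implication $(2)\Rightarrow(1)$ I would argue as follows. Assume $G$ is co-chordal with at least $n-1$ good pairs. Because the Scarf complex is a subcomplex of the minimal free resolution, each of its Betti numbers is at most the true one; in particular $\beta_{\ge 2}(J(G))=0$ forces the Scarf complex to be a graph, so its only contributions are the $n$ vertices and the good-pair edges. The edge count is then squeezed between $n-1$ (the hypothesis) and $\beta_1(J(G))=n-1$ (co-chordality), so all Betti numbers of the Scarf complex agree with those of the resolution. Since in each homological degree the free module of the Scarf complex is spanned by a subset of the basis of the corresponding module in the minimal resolution, equality of all ranks forces the two complexes to coincide, and $J(G)$ is Scarf.

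The difficult direction is $(1)\Rightarrow(2)$, and within it the genuinely hard point is that a Scarf cover ideal must be co-chordal; the count then comes for free, since Scarfness forces the number of good pairs to equal $\beta_1^{\mathrm{Scarf}}=\beta_1(J(G))=n-1$ once co-chordality is known. For ``Scarf $\Rightarrow$ co-chordal'' I would argue contrapositively. If $G$ is not co-chordal, then $G^{c}$ contains an induced cycle $C_k$ with $k\ge 4$, i.e.\ $G$ has an induced $\overline{C_k}$ on a vertex set $W$, and the independence complex of $\overline{C_k}$ is the $1$-dimensional cycle $C_k$, a space with $\widetilde{H}_1\neq 0$. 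The aim is to promote this nonvanishing homology into an unavoidable cycle in the Scarf complex of $J(G)$: one constructs $k$ minimal vertex covers that agree away from $W$ and whose traces on $W$ are the complements of the edges of $C_k$, so that the associated good-pair edges form a closed circuit bounding no Scarf $2$-face. Concretely, a suitable multidegree $x^{U}$ is then a Betti multidegree of $J(G)$ (the relevant restricted independence complex being homotopy equivalent to a sphere, so that $\pd J(G)\ge 2$ and this degree genuinely carries homology) while being the union of several distinct subsets of covers, hence not a Scarf multidegree; the Scarf complex therefore misses a Betti number and $J(G)$ is not Scarf. The main obstacle is precisely this construction: one must produce genuine \emph{minimal} covers realising the cyclic pattern on $W$ and verify that the induced subcomplex $\{\sigma:\bigcup_{i\in\sigma}V_i\subseteq U\}$ of the Scarf complex fails to be acyclic. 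I expect this to require the induced-subgraph locality of the relevant reduced homology, so that the computation reduces essentially to $\overline{C_k}$, together with a careful minimality check, after which the standard acyclicity criterion for when the Scarf complex supports a resolution completes the argument.
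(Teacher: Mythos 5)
Your dictionary between ``good pairs'' and Scarf edges is correct, and your argument for $(2)\Rightarrow(1)$ (Terai plus Fr\"oberg to get $\pd J(G)=1$ and $\beta_1(J(G))=n-1$, then squeezing the Scarf complex, which sits inside the minimal resolution, against these ranks) is essentially the paper's proof of that implication. The genuine gap is in $(1)\Rightarrow(2)$, exactly at the point you flag yourself: ``Scarf $\Rightarrow$ co-chordal.'' What you offer there is a plan, not a proof: the construction of $k$ minimal vertex covers of $G$ realising the cyclic pattern of an induced $\overline{C_k}$, the verification that the chosen multidegree is a Betti multidegree of $J(G)$, and the check that no further minimal covers of $G$ lie under that multidegree and kill the homology are all left undone. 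Moreover, the ``induced-subgraph locality'' you hope to invoke is genuinely problematic for cover ideals: unlike edge ideals and their squarefree, ordinary, and symbolic powers, the cover ideal $J(H)$ of an induced subgraph $H$ is \emph{not} in general an HHZ-restriction of $J(G)$. For instance, if $G$ is the path on $\{a,b,c\}$ and $H$ is the induced subgraph on $\{a,c\}$, then $J(G)^{\leq ac}=(ac)$ while $J(H)=S$; minimal covers of $G$ contained in $V(H)$ need not be minimal covers of $H$. So reducing the homology computation ``essentially to $\overline{C_k}$'' is not routine, and the central implication of the theorem remains unproven as written.

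The paper closes this direction by a completely different and much shorter route, with no topology and no construction of cycles. It first proves a Scarf edge lemma valid for \emph{every} graph: if $\{m_1,m_2\}$ is a Scarf edge of $J(G)$, then every $x\in\supp(m_1)\setminus\supp(m_2)$ and $y\in\supp(m_2)\setminus\supp(m_1)$ satisfy $xy\in E(G)$ (otherwise $Z\cup N(x)\cup N(y)$, with $Z$ the part of $\supp(m_1)$ outside $N[x]\cup N[y]$, is a vertex cover avoiding both $x$ and $y$; since $N(x)\subseteq \supp(m_2)$ and $N(y)\subseteq\supp(m_1)$, a minimal cover inside it yields a third generator dividing $\lcm(m_1,m_2)$). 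From this it deduces that the Scarf complex of \emph{any} cover ideal has dimension at most $1$: a two-dimensional Scarf face $\{m_1,m_2,m_3\}$ would produce private vertices $x_1,x_2$ forming an edge of $G$ that the vertex cover $\supp(m_3)$ misses entirely, a contradiction. Hence if $J(G)$ is Scarf, its minimal resolution has length at most one, so $\reg(S/I(G))=\pd(J(G))\leq 1$ and $G$ is co-chordal by Terai and Fr\"oberg; the count of good pairs then follows, as you note, for free. To salvage your approach you would need to carry out the cover construction and the minimality check in full; alternatively, adopting the paper's dimension bound replaces all of that by a two-step combinatorial argument.
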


Moreover, limiting to important classes of graphs such as chordal and bipartite graphs, we can explicitly describe those with Scarf cover ideals.

\begin{theorem}[{Theorem \ref{thm:Scarf-cover-ideal-chordal}}]
    Let $G$ be a chordal graph. Then $J(G)$ is Scarf if and only if $V(G)$ can be partitioned into a clique $A$ and an independent set $B$ such that
    \begin{enumerate}
        \item any vertex in $A$ is adjacent to a vertex in $B$;
        \item no two sets in the collection $\{ N_G(x) \cap B \colon x\in A \}$ contains one another.
    \end{enumerate}
\end{theorem}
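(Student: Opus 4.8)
The plan is to use the general characterization (Theorem~\ref{thm:Scarf-cover-ideals}) as the engine and specialize it to chordal graphs. The key structural fact is that for a chordal graph $G$, the cover ideal $J(G)$ being Scarf forces $G$ to be co-chordal (since co-chordality is already necessary in the general theorem), so I should first understand graphs that are simultaneously chordal and co-chordal. These are exactly the graphs whose complement is also chordal; a classical result characterizes them, and in particular the combinatorial data of their minimal vertex covers should be amenable to explicit description. My first step is therefore to translate condition (2) of Theorem~\ref{thm:Scarf-cover-ideals}---the counting condition on pairs $(V_1,V_2)$ of minimal vertex covers---into the language of the proposed partition $V(G)=A\sqcup B$ with $A$ a clique and $B$ an independent set.

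For the forward direction, I would start from $J(G)$ Scarf and chordal, deduce co-chordality, and then exhibit the clique/independent-set partition. The natural candidate for $A$ is a maximal clique and $B$ its complement; the condition that $B$ is independent should follow from co-chordality (an independent set in $G$ is a clique in $\overline{G}$, and chordal graphs have bounded structure), while I expect condition~(1)---every vertex of $A$ has a neighbor in $B$---to encode that no vertex is "redundant" in forming vertex covers. Condition~(2), the incomparability of the neighborhoods $N_G(x)\cap B$ for $x\in A$, is the crucial combinatorial shadow of the Scarf counting condition: minimal vertex covers of $G$ correspond to complements of maximal independent sets, and when $A$ is a clique, choosing which single vertex of $A$ to exclude (together with excluding its private neighbors in $B$) parametrizes a large family of minimal covers, so the pairwise-incomparability of these neighborhood sets is what guarantees enough "Scarf pairs." I would make this correspondence precise by writing down explicitly the minimal vertex covers in terms of the partition.

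For the converse, given the partition with properties (1) and (2), I would construct the requisite $n-1$ pairs of minimal vertex covers directly and verify condition~(2) of Theorem~\ref{thm:Scarf-cover-ideals}, and separately check co-chordality of $G$ from the partition (a clique plus an independent set with a "helly-type" neighborhood condition should have chordal complement). The main obstacle I anticipate is the bookkeeping in the forward direction: extracting the partition canonically and showing that chordality plus the Scarf counting condition forces precisely properties (1) and (2), rather than some weaker combinatorial condition. In particular, ruling out configurations where $G$ is chordal and co-chordal but the neighborhood sets $N_G(x)\cap B$ accidentally nest---and showing such configurations fail the Scarf count---will require a careful case analysis, likely leaning on the explicit enumeration of minimal vertex covers and the structure theory of chordal graphs (e.g.\ perfect elimination orderings or the simplicial-vertex structure). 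I would treat that enumeration as the technical heart of the argument and reduce everything else to it.
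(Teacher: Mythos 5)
Your overall route is the same as the paper's: specialize Theorem~\ref{thm:Scarf-cover-ideals}, use the fact that graphs which are both chordal and co-chordal are exactly the split graphs, write down the minimal vertex covers explicitly in terms of the partition, and unwind the counting condition into incomparability of the sets $N(x)\cap B$. However, two steps of your plan are genuinely broken as stated. First, your construction of the partition fails: it is \emph{not} true that a maximal (or even maximum) clique $A$ of a chordal and co-chordal graph has independent complement. Take $P_4$ with vertices $b_1-a_1-a_2-b_2$ (a split graph): the maximal clique $\{b_1,a_1\}$ has complement $\{a_2,b_2\}$, which spans an edge. The paper instead invokes the Foldes--Hammer theorem to get \emph{some} split partition $V(G)=A\sqcup B$, and then enforces condition (1) by a normalization you do not mention: any $x\in A$ with no neighbor in $B$ is moved from $A$ into $B$ (this keeps $A$ a clique and $B$ independent, precisely because $x$ has no $B$-neighbors). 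Without condition (1) the explicit description of $J(G)$ below is false, so this normalization is not optional.

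Second, the ``careful case analysis\dots leaning on perfect elimination orderings'' that you expect to be the technical heart is not what is needed; the actual crux is a single counting observation absent from your plan. With the normalized partition and $A=\{x_1,\dots,x_n\}$, the minimal vertex covers are exactly $A$ and $m_i\coloneqq (A\setminus\{x_i\})\cup(N(x_i)\cap B)$ for $1\le i\le n$, so $\mu(J(G))=n+1$. For $i\neq j$ the monomial $x_1\cdots x_n$ divides $\lcm(m_i,m_j)$, so no pair $\{m_i,m_j\}$ can ever be a Scarf edge; hence the only candidate Scarf edges are the $n=\mu(J(G))-1$ pairs $\{A,m_i\}$. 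By the counting characterization in Theorem~\ref{thm:Scarf-cover-ideals}, $J(G)$ is Scarf if and only if \emph{every} one of these candidates is Scarf, i.e., $m_i\nmid\lcm(x_1\cdots x_n,m_j)$ for all $i\neq j$, and this divisibility unwinds exactly to $N(x_i)\cap B\not\subseteq N(x_j)\cap B$, which is condition (2). That one observation disposes of both directions simultaneously (the converse also needs the remark that a graph admitting a split partition is automatically co-chordal, so Theorem~\ref{thm:Scarf-cover-ideals} applies); no structure theory of chordal graphs beyond Foldes--Hammer enters.
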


\begin{theorem}[{Theorem \ref{thm:Scarf-cover-ideals-bipartite}}]
    Let $G$ be a bipartite graph. Then $J(G)$ is Scarf if and only if $G$ is a Ferrers graph.
\end{theorem}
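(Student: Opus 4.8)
The plan is to derive the statement from the general characterization in Theorem~\ref{thm:Scarf-cover-ideals}, specialized to the bipartite setting. Writing $\overline{G}$ for the complement of $G$, recall that being co-chordal means $\overline{G}$ is chordal, and that condition~(2) there asks for at least $n-1$ pairs of minimal vertex covers $(V_1,V_2)$ whose union contains no minimal vertex cover other than $V_1$ and $V_2$; call such a pair \emph{good}. Thus it suffices to establish two facts for a bipartite graph $G$: first, that $G$ is co-chordal if and only if $G$ is a Ferrers graph; and second, that every Ferrers graph automatically satisfies condition~(2). The first fact reduces the ``only if'' direction to an obstruction, while the second carries the ``if'' direction.

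For the co-chordality step I would show that for bipartite $G$ the properties
\[
\overline{G}\text{ is chordal}\quad\Longleftrightarrow\quad \overline{G}\text{ is }C_4\text{-free}\quad\Longleftrightarrow\quad G\text{ is }2K_2\text{-free}
\]
all coincide, and that the last is equivalent to $G$ being a Ferrers graph (equivalently, the neighborhoods along one side are totally ordered by inclusion). The middle equivalence is pure complementation, since $\overline{2K_2}=C_4$, so an induced $2K_2$ in $G$ is exactly an induced $C_4$ in $\overline{G}$. The first equivalence is where bipartiteness enters: an induced cycle of length $k\geq 5$ in $\overline{G}$ would exhibit $\overline{C_k}$ as an induced subgraph of $G$, but $\overline{C_k}$ is non-bipartite (it is $C_5$ itself when $k=5$ and contains a triangle on $\{1,3,5\}$ when $k\geq 6$), contradicting that $G$ is bipartite; hence $\overline{G}$ has no long induced cycles and $C_4$-freeness already forces chordality. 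In particular, if $G$ is bipartite but not Ferrers then it is not co-chordal, condition~(1) fails, and $J(G)$ is not Scarf, which settles the ``only if'' direction.

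For the ``if'' direction I would pin down the minimal vertex covers of a Ferrers graph explicitly. Encoding $G$ by a partition $\lambda=(\lambda_1\geq\cdots\geq\lambda_p)$ via its Young diagram, a minimal vertex cover is the same as an irredundant cover of the cells of $\lambda$ by full rows and full columns. I expect to prove that each such cover is governed by a single threshold $a$, consisting of all columns $\{1,\dots,a\}$ together with all rows of length exceeding $a$, and that the admissible thresholds are precisely $a=0$ and the distinct part sizes of $\lambda$. This yields a linearly ordered family $V_0,V_1,\dots,V_s$ of minimal vertex covers with $n=s+1$. The decisive computation is that, for $t\leq t'$, one has $V_u\subseteq V_t\cup V_{t'}$ if and only if $t\leq u\leq t'$; consequently each consecutive pair $(V_t,V_{t+1})$ has $V_t\cup V_{t+1}$ containing no other minimal vertex cover, producing $n-1$ good pairs and verifying condition~(2).

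The main obstacle is the structural analysis of minimal vertex covers in the ``if'' direction: proving that the threshold description is complete (every minimal line cover arises this way, and exactly for the claimed values of $a$) and then establishing the containment criterion $V_u\subseteq V_t\cup V_{t'}\iff t\leq u\leq t'$, which is what forces the good pairs to include all consecutive ones. Once this is in hand, the count $n=1+\#\{\text{distinct degrees on one side of }G\}$ and the exhibition of the $n-1$ consecutive good pairs are immediate, and combining this with the co-chordality step completes the equivalence.
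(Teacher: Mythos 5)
Your proposal is correct and follows essentially the same route as the paper: reduce to Theorem~\ref{thm:Scarf-cover-ideals}, describe the minimal vertex covers of a Ferrers graph as a linearly ordered ``threshold'' family (this is the paper's Lemma~\ref{lem:Ferrers-MVC-2} restated in Young-diagram language), and note that consecutive pairs in this chain supply the required $\mu(J(G))-1$ good pairs. The only differences are in level of detail: the paper cites \cite[Theorem~4.2]{CN08} for the equivalence ``bipartite co-chordal $\iff$ Ferrers,'' which you instead prove directly via $\overline{2K_2}=C_4$ and the non-bipartiteness of $\overline{C_k}$ for $k\geq 5$, and your containment criterion $V_u\subseteq V_t\cup V_{t'}\iff t\leq u\leq t'$ makes explicit the good-pair verification that the paper compresses into ``follows directly.''
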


\section*{Acknowledgements} 

The first author acknowledges support from the Infosys Foundation. The third author was supported by NSF grants DMS~2101671 and DMS~2349623. We would like to thank Professors Huy T\`ai H\`a and Takayuki Hibi for the many discussions without which this work would not be possible.

\section{Preliminaries}\label{sec:preliminaries}

Throughout the paper, $S$ denotes the polynomial ring $\Bbbk[x_1,\dots, x_N]$ over a field $\Bbbk$, and $\mathfrak{m}$ its homogeneous maximal ideal.

\subsection{Simplicial resolutions and complexes}

For any homogeneous ideal $I$ of $S$, a \emph{free resolution} of $S/I$ over $S$ is a complex of $S$-modules 
\[
\mathcal{F}\colon 0\to F_p\to \cdots \to F_1\to F_0\to 0
\]
where $H_0(\mathcal{F})\cong S/I$ and $H_i(\mathcal{F})=0$ for any $i>0$. Moreover, $\mathcal{F}$ is called \emph{$\mathbb{N}^d$-graded} for some integer $d$ if each differential is $\mathbb{N}^d$-homogeneous, and \emph{minimal}  if $\partial(F_i) \subseteq \mathfrak{m}F_{i-1}$ for each $i> 0$. 

Let $I=(m_1,\dots, m_q)$ be a monomial ideal in a polynomial ring $S$. We denote by $\mingens(I)$ the minimal set of monomial generators of $I$. Set $a_m$ to be the exponent vector of a monomial $m$. It is well-known (cf. \cite{Tay66}) that the full $q$-simplex with vertices labeled with monomial generators of $I$ induces a free resolution of $S/I$, called the \emph{Taylor's resolution}:
\[
\mathcal{T}_I\colon 0\to T_q \xrightarrow{\partial} \cdots \xrightarrow{\partial} T_1\xrightarrow{\partial} T_0\to 0
\]
where
\[
T_i\coloneqq \bigoplus_{\substack{\sigma \subseteq \mingens(I)\\
|\sigma|=i}} Se_{\sigma}
\]
for each $i\geq 0$, and
\[
\partial(e_{\sigma}) = \sum_{j=1}^t (-1)^{j+1} \frac{\lcm(\sigma)}{\lcm(\sigma \setminus \{m_{i_j}\})} e_{\sigma \setminus \{m_{i_j}\}}
\]
where $\sigma= \{m_{i_1},\dots, m_{i_t}\}$. We note that $Se_{\sigma}= S(-a_{\lcm(\sigma)})$. The shifts here are needed to make $\mathcal{T}_I$ $\mathbb{N}^N$-graded. We  label each face of this full $q$-simplex, which corresponds to a subset $\sigma\subseteq \mingens(I)$, with the monomial $\lcm(\sigma)$. 

A \emph{simplicial complex} $\Delta$ with $q$ vertices labeled with monomial generators of $I$ naturally induces a complex of $S$-modules, denoted by $\mathcal{T}_\Delta$, that is canonically a subcomplex of the Taylor resolution $\mathcal{T}_I$ of $S/I$. Recall that a simplicial complex is called \emph{acyclic} if it is connected and all its positive-order homology groups are zero. Bayer, Peeva, and Sturmfels gave a characterization on when this complex is a resolution of $S/I$. 

\begin{lemma}[{\cite[Lemma~2.2]{BPS98}}]\label{lem:BPS}
    Let $I$ be a monomial ideal and let $\Delta$ be a simplicial complex with vertices labeled with elements in $\mingens(I)$. Then the complex $\mathcal{T}_\Delta$ is a resolution of $S/I$ if and only if for any monomial $m$, the simplicial complex $\Delta_{\leq m} = \{ \sigma  \in \Delta \colon \lcm(\sigma) \mid m \} $ is either acyclic or empty over $\Bbbk$.
\end{lemma}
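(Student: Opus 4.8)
The plan is to exploit the $\mathbb{N}^N$-grading of $\mathcal{T}_\Delta$ and reduce the exactness question to a topological computation on the subcomplexes $\Delta_{\leq m}$, one multidegree at a time. Since $\mathcal{T}_\Delta$ is a subcomplex of the $\mathbb{N}^N$-graded Taylor resolution $\mathcal{T}_I$, every differential of $\mathcal{T}_\Delta$ is $\mathbb{N}^N$-homogeneous, so $\mathcal{T}_\Delta$ decomposes as the direct sum of its multigraded strands. Indexing these by monomials $m$ through their exponent vectors $a_m$, each strand $(\mathcal{T}_\Delta)_{a_m}$ is a finite complex of $\Bbbk$-vector spaces, and $H_i(\mathcal{T}_\Delta) = \bigoplus_m H_i\big((\mathcal{T}_\Delta)_{a_m}\big)$. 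Hence $\mathcal{T}_\Delta$ is exact in homological degree $i$ precisely when every strand $(\mathcal{T}_\Delta)_{a_m}$ is.

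The key step is to identify each strand $(\mathcal{T}_\Delta)_{a_m}$ with the augmented (reduced) chain complex of $\Delta_{\leq m}$ over $\Bbbk$. Recall $(\mathcal{T}_\Delta)_i = \bigoplus_{\sigma \in \Delta,\, |\sigma| = i} S e_\sigma$ with $S e_\sigma = S(-a_{\lcm(\sigma)})$. The multidegree-$a_m$ component of $S e_\sigma$ is one-dimensional over $\Bbbk$ when $a_{\lcm(\sigma)} \leq a_m$ — equivalently when $\lcm(\sigma) \mid m$, i.e. $\sigma \in \Delta_{\leq m}$ — and is zero otherwise. Thus in homological degree $i$ the strand $(\mathcal{T}_\Delta)_{a_m}$ is the $\Bbbk$-span of the $i$-element faces of $\Delta_{\leq m}$, namely its $(i-1)$-dimensional faces; this is exactly $\widetilde{C}_{i-1}(\Delta_{\leq m}; \Bbbk)$, the empty face $\emptyset \in \Delta_{\leq m}$ accounting for the degree-$0$ term. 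I would then check that the Taylor differential restricts correctly: in a fixed multidegree $a_m$ the monomial coefficient $\lcm(\sigma)/\lcm(\sigma \setminus \{m_{i_j}\})$ specializes to the scalar $1$, so the strand differential equals $\sum_j (-1)^{j+1}$ times the face-omission map, which is the simplicial boundary of $\Delta_{\leq m}$ under the usual sign convention. This yields
\[
H_i\big((\mathcal{T}_\Delta)_{a_m}\big) \cong \widetilde{H}_{i-1}(\Delta_{\leq m}; \Bbbk) \qquad \text{for all } i \geq 0.
\]

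It remains to assemble these isomorphisms into the stated criterion. Because every element of $\mingens(I)$ labels a vertex of $\Delta$, the degree-$1$ part of $\mathcal{T}_\Delta$ maps onto $I \subseteq S = (\mathcal{T}_\Delta)_0$, giving $H_0(\mathcal{T}_\Delta) \cong S/I$ independently of the higher structure of $\Delta$. Therefore $\mathcal{T}_\Delta$ resolves $S/I$ if and only if $H_i(\mathcal{T}_\Delta) = 0$ for all $i > 0$, which by the strand decomposition and the displayed isomorphism amounts to $\widetilde{H}_j(\Delta_{\leq m}; \Bbbk) = 0$ for every monomial $m$ and every $j \geq 0$. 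Finally I would convert this into the dichotomy of the statement: since $\lcm(\emptyset) = 1$ divides every $m$, the complex $\Delta_{\leq m}$ always contains the empty face, and the vanishing of all $\widetilde{H}_j$ with $j \geq 0$ holds in exactly two situations — either $\Delta_{\leq m}$ has no vertices (so $\Delta_{\leq m} = \{\emptyset\}$ is topologically empty, with $\widetilde{H}_{-1} = \Bbbk$ but $\widetilde{H}_j = 0$ for $j \geq 0$), or $\Delta_{\leq m}$ is nonempty, connected (i.e. $\widetilde{H}_0 = 0$), and has $\widetilde{H}_j = 0$ for $j \geq 1$, that is, acyclic in the sense defined above. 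The main delicacy I anticipate lies in the bookkeeping around the empty face and reduced homology in degree $-1$: one must track precisely that the homological-degree-$0$ term of $\mathcal{T}_\Delta$ is $\widetilde{C}_{-1}$, so that the ``empty'' alternative of the lemma is matched with $\widetilde{H}_{-1} \neq 0$ while all positive-degree homology still vanishes, and one must confirm the sign and orientation match between the Taylor differential and the simplicial boundary map.
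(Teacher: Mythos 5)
Your proof is correct, including the delicate point you flag: identifying the homological-degree-$0$ strand term with $\widetilde{C}_{-1}(\Delta_{\leq m};\Bbbk)$ so that the ``empty'' alternative of the lemma corresponds to $\widetilde{H}_{-1}(\Delta_{\leq m};\Bbbk)\neq 0$ with all $\widetilde{H}_j$, $j\geq 0$, vanishing, while the ``acyclic'' alternative handles the case with vertices; the specialization of the Taylor coefficients $\lcm(\sigma)/\lcm(\sigma\setminus\{m_{i_j}\})$ to $\pm 1$ in a fixed multidegree is also handled correctly. The paper itself gives no proof --- it cites the result from \cite[Lemma~2.2]{BPS98} --- and your multigraded strand decomposition is precisely the standard argument of that source, so the approaches coincide.
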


A particular simplicial complex we are interested in is the \emph{Scarf complex}, formed by the faces of the $q$-simplex with unique labels. It is known that this is a simplicial complex (see, e.g., \cite[Remark~5.2]{Mer09}). We will refer to a face of the Scarf complex as a Scarf face. The corresponding complex of $S$-modules is called the \emph{Scarf complex} of $S/I$. In general, the Scarf complex is not a resolution, but is always a subcomplex of the minimal resolution of $S/I$ (cf. \cite[Theorem~59.2]{Peeva10}). Thus, if it is a resolution, it is the minimal resolution.

We call the monomial ideal $I$ Taylor (resp. Scarf) if the Taylor resolution (resp. Scarf complex) of $S/I$ is its minimal free resolution. The following result is straightforward, and we include a proof for completeness.

\begin{lemma}\label{lem:Taylor-implies-Scarf}
    Taylor ideals are Scarf.
\end{lemma}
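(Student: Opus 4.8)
The plan is to show that if the Taylor resolution of $S/I$ is minimal, then the Taylor complex and the Scarf complex coincide, so that the Scarf complex is also the minimal resolution. Recall that the Taylor resolution corresponds to the full $q$-simplex on the generators $\mingens(I) = \{m_1,\dots,m_q\}$, while the Scarf complex consists precisely of those faces $\sigma$ whose label $\lcm(\sigma)$ is unique among all faces. So it suffices to prove that, under the hypothesis that $\mathcal{T}_I$ is minimal, \emph{every} face of the $q$-simplex has a unique label, i.e.\ the labelling map $\sigma \mapsto \lcm(\sigma)$ is injective on the power set of $\mingens(I)$.

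First I would translate minimality into a statement about the labels. By definition, $\mathcal{T}_I$ is minimal exactly when $\partial(T_i) \subseteq \mathfrak{m}\, T_{i-1}$ for all $i > 0$, which means that no coefficient appearing in the differential is a nonzero scalar. Looking at the formula for $\partial(e_\sigma)$, the coefficient of $e_{\sigma \setminus \{m\}}$ is the monomial $\lcm(\sigma)/\lcm(\sigma \setminus \{m\})$. Minimality therefore says that for every face $\sigma$ and every generator $m \in \sigma$, this quotient lies in $\mathfrak{m}$, equivalently $\lcm(\sigma) \neq \lcm(\sigma \setminus \{m\})$; that is, dropping any single vertex from any face \emph{strictly} decreases the lcm.

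Next I would argue that this ``strict drop'' condition forces all labels to be distinct. Suppose for contradiction that two distinct faces $\sigma \neq \tau$ satisfied $\lcm(\sigma) = \lcm(\tau)$. The natural move is to consider $\sigma \cup \tau$: since $\lcm$ is monotone, $\lcm(\sigma \cup \tau)$ divides (and is divided by) the common value $\lcm(\sigma) = \lcm(\tau)$, so all three lcm's agree. Picking any element $m$ in the symmetric difference of $\sigma$ and $\tau$ (which is nonempty since the faces differ), say $m \in \tau \setminus \sigma$, I would note that removing $m$ from $\sigma \cup \tau$ leaves a face still containing $\sigma$, hence with the same lcm; this produces a vertex whose removal does \emph{not} decrease the lcm, contradicting minimality. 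I expect the main obstacle to be handling this combinatorial bookkeeping cleanly — in particular choosing the right face on which to exhibit the non-strict drop — but the underlying idea is exactly that minimality of Taylor is equivalent to all lcm labels being distinct.

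Finally, once all labels are distinct, every face of the simplex is a Scarf face by definition, so the Scarf complex equals the full Taylor complex, which by hypothesis is the minimal free resolution of $S/I$. Hence $I$ is Scarf, completing the proof. An alternative, slicker route I would keep in mind is to invoke Lemma~\ref{lem:BPS}: since the Scarf complex is always a subcomplex of the minimal resolution, and the Taylor complex (the full simplex) is assumed to be that same minimal resolution, a dimension count on graded Betti numbers shows the two subcomplexes of the minimal resolution have the same ranks in each homological degree, forcing equality. Either way, the content is the equivalence between minimality of Taylor and uniqueness of lcm labels.
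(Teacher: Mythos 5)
Your proof is correct and is essentially the same argument as the paper's: both reduce Scarfness to showing that minimality of the Taylor resolution forces all lcm labels to be distinct, and both establish this by taking two faces $\sigma\neq\tau$ with $\lcm(\sigma)=\lcm(\tau)$, passing to the union $\sigma\cup\tau$, and removing an element of the symmetric difference to exhibit a unit coefficient in the differential. The only cosmetic difference is that you argue by contradiction where the paper phrases the same step as a contrapositive.
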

\begin{proof}
    The result is equivalent to saying that if the Taylor resolution of a monomial ideal $I$ is minimal, then each label $\lcm(\sigma)$ where $\sigma\subseteq \mingens(I)$ is unique. We will show this with contraposition. Let $\sigma$ be a subset of $\mingens(I)$ such that its label $\lcm(\sigma)$ is not unique. It suffices to show that the Taylor resolution of $S/I$ is not minimal.

    Indeed, since $\lcm(\sigma)$ is not unique, there exists $\sigma\neq \sigma'\subseteq \mingens(I)$ such that $\lcm(\sigma') = \lcm(\sigma)$. Set $\tau\coloneqq \sigma \cup \sigma'$, and consider any $m\in (\sigma \setminus \sigma') \cup (\sigma'\setminus \sigma)$. Then $m\in \tau$, and $\lcm(\tau) = \lcm(\tau \setminus \{m\})$. Then inspecting the differential of the Taylor resolution of $S/I$, we observe that
    \[
    \partial(e_{\tau}) = \pm e_{\tau\setminus \{m\}} + \cdots.
    \]
    By definition, the Taylor resolution is not minimal, as desired.
\end{proof}

Lastly, we note the well-known fact that a pure $1$-dimensional simplicial complex is connected if and only if it is shellable. Therefore, it follows from \cite[Corollary~3.1.4]{Wachs} that a graph, as a $1$-dimensional simplicial complex, is acyclic if and only if it is a tree.

\subsection{Graphs, edge ideals, and cover ideals}

Let $G=(V(G),E(G))$ be a finite simple graph with $V(G)=\{x_1,\dots ,x_N\}$. The \emph{edge ideal} $I(G)$ of $G$ is an ideal of $S=\Bbbk[x_1,\dots ,x_N]$ defined~by
\[
I(G) = (xy\colon \{x,y\} \in E(G)).
\]
To simplify the notation, we will use an edge $\{x,y\}$  interchangeably with the corresponding monomial generator $xy$. A \emph{vertex cover} of $G$ is a subset of vertices of $G$ such that each edge of $G$ contains at least one of these vertices. 
The \emph{cover ideal} $J(G)$ of $G$ is defined to be 
\[
J(G) = \left(\prod_{x\in W} x \colon W\subseteq V(G) \text{ is a vertex cover of } G  \right).
\]
We remark that the minimal generators of the cover ideal $J(G)$ correspond to minimal vertex covers of $G$. It is clear that the edge ideal and cover ideal are unchanged after adding isolated vertices to the graph. Thus we will assume that all graphs in this paper have no isolated vertices.

A \emph{matching} of $G$ is a set of edges of $G$, among which no two edges share a common vertex. The \emph{matching number} of $G$, denoted by $m(G)$, is the maximum cardinality of a matching of $G$.

An \emph{induced subgraph} $H$ of $G$ is a subgraph such that $\{x,y\}\in E(H)$ whenever $\{x,y\}\in E(G)$ and $x,y\in V(H)$.

We denote by $C_n$ the $n$-cycle graph for any $n\geq 3$. In particular, $C_3$ is a triangle. A \emph{tree} is a connected graph that does not contain any $n$-cycle as a subgraph. A \emph{bipartite} graph is a graph that has no odd-cycle subgraph. A \emph{chordal} graph is a graph that does not contain any $n$-cycles as an induced subgraph for any $n\geq 4$. A \emph{co-chordal} graph is a graph whose complement is chordal.

A set $W$ of vertices of $G$ is called an \emph{independent} set if no two vertices in $W$ are adjacent. A \emph{clique} of $G$ is a subgraph of $G$ which is a complete graph. A \emph{leaf} of $G$ is a vertex which has only one neighbour. The unique neighbour of a leaf is called its \emph{joint}.

\subsection{Restriction Lemma and HHZ-subideals}

There is a bijective correspondence between $\mathbb{N}^{N}$ and the set of all monomials in $S$. We will thus use monomials to sometimes denote vectors in $\mathbb{N}^{N}$.

Fix a monomial ideal $I$ and a monomial $m$. Let $I^{\leq m}$ be the ideal generated by all elements of $\mingens(I)$ that divide $m$. It is clear that $I^{\leq m}$ is always a subideal of $I$. This notation was introduced in \cite{HHZ04}, although it is worth mentioning that the idea appeared briefly in \cite{BPS98}. We will call $I^{\leq m}$ a \emph{Herzog-Hibi-Zheng-subideal} (with respect to $m$) of $I$, or \emph{HHZ-subideal} for short. 

Let 
\[
\mathcal{F}: 0\to F_n \to F_{n-1}\to \dots \to F_1\to F_0 \to 0
\]
be a (minimal) $\mathbb{N}^{N}$-graded free resolutions of $S/I$, with $F_i= \bigoplus_j S(-q_{ij})$. Let $\mathcal{G}$ be the subcomplex of $\mathcal{F}$ where
\[
G_i\coloneqq \bigoplus_{q_{ij}\leq m} S(-q_{ij}).
\]
Here by $a\leq b$ for two monomials $a$ and $b$, we mean $a\mid b$. Then by \cite[Lemma~4.4]{HHZ04}, $\mathcal{G}$ is a (minimal) $\mathbb{N}^d$-graded free resolution of $S/I^{\leq m}$. This result is commonly known as the \emph{Restriction Lemma}. It is straightforward that the restriction of the Taylor resolution of $S/I$ with respect to a monomial $m$ is the Taylor resolution of $S/I^{\leq m}$, and that the same holds for Scarf complexes.

\begin{lemma}[{Restriction Lemma for Taylor resolutions and Scarf complexes}]\label{lem:HHZ-Taylor-Scarf}
     Let $I$ be a monomial ideal and let $m$ be a monomial. If $I$ is Taylor (or Scarf), then so is $I^{\leq m}$.
\end{lemma}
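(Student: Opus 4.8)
The plan is to exploit the Restriction Lemma stated just above (\cite[Lemma~4.4]{HHZ04}), together with the remark that restricting the Taylor resolution (resp. Scarf complex) of $S/I$ to a monomial $m$ yields precisely the Taylor resolution (resp. Scarf complex) of $S/I^{\leq m}$. So the entire argument reduces to transporting \emph{minimality} across the restriction.

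First I would make the combinatorial identification explicit. If $I=(m_1,\dots,m_q)$, then $\mingens(I^{\leq m})$ consists exactly of those $m_i$ dividing $m$; call this set $G_m\subseteq\mingens(I)$. The Taylor resolution $\mathcal{T}_{I^{\leq m}}$ is the full simplex on the vertex set $G_m$, and this is visibly the subcomplex of $\mathcal{T}_I$ consisting of all faces $\sigma$ with $\sigma\subseteq G_m$, i.e.\ all $\sigma$ with $\lcm(\sigma)\mid m$. The same restriction picks out, from the Scarf complex of $S/I$, exactly the faces $\sigma$ with $\lcm(\sigma)\mid m$, which are the Scarf faces of $S/I^{\leq m}$ (uniqueness of the label $\lcm(\sigma)$ among subsets of $G_m$ follows from uniqueness among subsets of $\mingens(I)$, since $G_m\subseteq\mingens(I)$). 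This is the content of the sentence preceding the lemma, so I would simply cite it rather than reprove it.

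Next comes the transport of minimality, which is where the Restriction Lemma does the real work. Suppose $I$ is Taylor, so $\mathcal{T}_I$ is the minimal free resolution of $S/I$. Applying the Restriction Lemma to the minimal resolution $\mathcal{F}=\mathcal{T}_I$ with respect to $m$, the subcomplex $\mathcal{G}$ obtained by keeping all summands with shift $\leq m$ is a \emph{minimal} free resolution of $S/I^{\leq m}$. But by the preceding paragraph this subcomplex $\mathcal{G}$ is exactly $\mathcal{T}_{I^{\leq m}}$. Hence the Taylor resolution of $S/I^{\leq m}$ is minimal, i.e.\ $I^{\leq m}$ is Taylor. For the Scarf case I would argue the same way with $\mathcal{F}$ taken to be the Scarf complex of $S/I$, which under the hypothesis that $I$ is Scarf is the minimal free resolution of $S/I$; its restriction to $m$ is both minimal (Restriction Lemma) and equal to the Scarf complex of $S/I^{\leq m}$, so $I^{\leq m}$ is Scarf. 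Alternatively, the Scarf statement follows from the Taylor one together with Lemma~\ref{lem:Taylor-implies-Scarf}, but the direct route via the Restriction Lemma is cleaner and symmetric.

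I do not expect any serious obstacle here, since the two facts being combined are both already available in the excerpt. The only point requiring a little care is the verification that the \emph{set of summands surviving the restriction} coincides on the nose with the Taylor resolution (resp.\ Scarf complex) of the subideal, rather than merely containing it or being contained in it. This amounts to checking that a face $\sigma\subseteq\mingens(I)$ satisfies $\sigma\subseteq G_m$ if and only if its Taylor shift $\lcm(\sigma)$ divides $m$, which is immediate from $\lcm(\sigma)\mid m \iff m_i\mid m$ for every $m_i\in\sigma$; and similarly that the label-uniqueness property defining Scarf faces is inherited by subsets of $G_m$. With that identification in hand, the lemma is a formal consequence, so the write-up can be kept to a few lines.
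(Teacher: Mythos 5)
Your proposal is correct and takes essentially the same approach the paper intends: the paper states this lemma without a separate proof, treating it as an immediate consequence of the Restriction Lemma \cite[Lemma~4.4]{HHZ04} combined with the observation, made in the sentence preceding the lemma, that restricting the Taylor resolution (resp.\ Scarf complex) of $S/I$ to $m$ yields exactly the Taylor resolution (resp.\ Scarf complex) of $S/I^{\leq m}$. Your write-up merely makes explicit the face-by-face identification ($\sigma\subseteq G_m$ iff $\lcm(\sigma)\mid m$, and inheritance of label-uniqueness) that the paper dismisses as straightforward.
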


We record here an unsurprising result which will be used throughout the paper.

\begin{lemma}\label{lem:multiply-by-a-monomial}
    Let $I$ be a monomial ideal, and $m$ a monomial. Then, $I$ is Scarf if and only if $mI$ is.
\end{lemma}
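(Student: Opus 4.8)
The plan is to prove both directions by relating the combinatorial structure of the minimal generators of $I$ and $mI$ and the labels $\lcm(\sigma)$ on faces of their Taylor simplices. The crucial observation is that multiplication by a fixed monomial $m$ induces a bijection $\mingens(I) \to \mingens(mI)$ sending each generator $g$ to $mg$; this is a bijection precisely because $g \mid g'$ if and only if $mg \mid mg'$, so the divisibility-minimal generators correspond exactly. Identifying the vertex sets of the two Taylor simplices via this bijection, I would first verify that for any subset $\sigma \subseteq \mingens(I)$, the corresponding subset $m\sigma \subseteq \mingens(mI)$ satisfies $\lcm(m\sigma) = m \cdot \lcm(\sigma)$. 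This is the key lemma, and it follows coordinatewise: for each variable $x_i$, the exponent of $x_i$ in $\lcm(m\sigma)$ is the maximum over generators in $m\sigma$ of their $x_i$-exponents, which equals the $x_i$-exponent of $m$ plus the maximum of the $x_i$-exponents over $\sigma$.

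With the label-compatibility $\lcm(m\sigma) = m \cdot \lcm(\sigma)$ in hand, I would then argue that the Scarf faces of $I$ and $mI$ coincide under the identification. A face $\sigma$ has a non-unique label for $I$ exactly when there is some $\sigma' \neq \sigma$ with $\lcm(\sigma') = \lcm(\sigma)$; multiplying by $m$, this happens if and only if $\lcm(m\sigma') = \lcm(m\sigma)$, i.e. exactly when the corresponding face $m\sigma$ has a non-unique label for $mI$. Hence a face lies in the Scarf complex of $S/I$ if and only if the corresponding face lies in the Scarf complex of $S/(mI)$, so the two Scarf complexes are isomorphic as simplicial complexes.

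The final step is to upgrade this isomorphism of abstract simplicial complexes to the statement about minimality of the Scarf resolution. Since the differentials of the Taylor (and hence Scarf) complex are determined by the face labels via the quotients $\lcm(\sigma)/\lcm(\sigma \setminus \{g\})$, and since $\lcm(m\sigma)/\lcm(m(\sigma \setminus \{g\})) = \lcm(\sigma)/\lcm(\sigma \setminus \{g\})$ because the factor $m$ cancels, the Scarf complex of $S/(mI)$ is obtained from that of $S/I$ by the graded shift sending $S(-a_{\lcm(\sigma)})$ to $S(-a_{m \cdot \lcm(\sigma)})$, with identical differential matrices. In particular one is acyclic in positive degrees if and only if the other is, so the Scarf complex resolves $S/(mI)$ if and only if it resolves $S/I$. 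By the characterization that $I$ is Scarf exactly when its Scarf complex is a resolution, we conclude $I$ is Scarf if and only if $mI$ is.

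I expect the only real subtlety to be the bookkeeping in the step $\lcm(m\sigma) = m\cdot\lcm(\sigma)$ when $m$ shares variables with the generators, but this is genuinely routine by the coordinatewise maximum computation above; no interesting obstruction arises. One could alternatively phrase the whole argument more slickly using the Restriction Lemma (Lemma~\ref{lem:HHZ-Taylor-Scarf}) in one direction, since $I = (mI)^{\leq \mu}$ for a suitable monomial $\mu$ divisible by every generator of $mI$, giving that $mI$ Scarf implies $I$ Scarf immediately; the reverse direction still requires the label-shift argument above.
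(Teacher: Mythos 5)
Your core argument---the bijection $g\mapsto mg$ on minimal generators, the coordinatewise identity $\lcm(m\sigma)=m\cdot\lcm(\sigma)$, and the resulting correspondence of label uniqueness, which identifies the two Scarf complexes as the same abstract simplicial complex---is exactly the paper's proof, and that part is correct. The trouble is in the two places where you go beyond it. The claim that the Scarf chain complex of $S/(mI)$ has \emph{identical differential matrices} to that of $S/I$ is false, and taken literally it would contradict your own conclusion: two complexes with identical differentials have the same $H_0$, whereas here one must have $H_0\cong S/I$ and the other $H_0\cong S/(mI)$. The cancellation $\lcm(m\sigma)/\lcm(m(\sigma\setminus\{g\}))=\lcm(\sigma)/\lcm(\sigma\setminus\{g\})$ holds only when $\sigma\setminus\{g\}\neq\varnothing$; the empty face carries the label $1$ in \emph{both} complexes, so the first differential for $mI$ is $m$ times the first differential for $I$ (entries $mg$ versus $g$). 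The repair is routine: since $S$ is a domain, $\ker(m\,\partial_1)=\ker(\partial_1)$, so the homology in positive homological degrees is unchanged, while the cokernel of the first differential becomes $S/mI$, as needed. Alternatively, and most cleanly, finish with Lemma~\ref{lem:BPS}: writing $\Delta_I$ and $\Delta_{mI}$ for the two Scarf simplicial complexes, for any monomial $u$ the subcomplex $(\Delta_{mI})_{\leq u}$ is empty if $m\nmid u$ and equals $(\Delta_I)_{\leq u/m}$ if $m\mid u$, so the acyclicity criterion holds for one iff it holds for the other.

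The closing aside is also wrong: you cannot get the direction ``$mI$ Scarf $\implies$ $I$ Scarf'' from the Restriction Lemma, because $I\neq (mI)^{\leq\mu}$ for every monomial $\mu$. Indeed, any HHZ-subideal of $mI$ is generated by a subset of $\mingens(mI)=\{mg\colon g\in\mingens(I)\}$ and is therefore contained in $mI$, which is a \emph{proper} subideal of $I$ whenever $m\neq 1$. So no restriction of $mI$ recovers $I$, and the symmetric label argument is genuinely needed for both directions.
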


\begin{proof}
    It is straightforward that a label $\lcm(\sigma)$, where $\sigma\subseteq \mingens(I)$, for the ideal $I$ is unique if and only if the label $m \lcm(\sigma)$ is unique among the possible labels for the ideal $mI$. In particular, this implies that the two Scarf complexes of $S/I$ and $S/mI$ are induced by the same simplicial complex (where vertices labelled with corresponding generators). The result the follows.
\end{proof}

\section{Squarefree powers of edge ideals}\label{sec:squarefree powers}

The \emph{$n$-th squarefree power} of an edge ideal $I(G)$, denoted by $I(G)^{[n]}$, is the ideal generated by the squarefree monomials of $I(G)^n$. In other words, for $m=\prod_{x\in V(G)} x$, we can define \[I(G)^{[n]}= \big( I(G)^n \big)^{\leq m}\]
and therefore $I(G)^{[n]}$ is an HHZ-subideal of $I(G)^n$.
Squarefree powers of edge ideals were introduced in \cite{BHZ} and quickly gained traction \cite{EHHS24, EREY2022105585, EH21, FHH2023, SF2024}.
It is clear that the minimal generators of $I(G)^{[n]}$ correspond to the matchings of size $n$ of $G$. Thus, $I(G)^{[n]} \neq  0$ if and only if $n\leq m(G)$. To omit trivial cases, we assume that $n\leq m(G)$ for the rest of the section.

We first prove some basic rules that will be useful later when we apply restriction to powers of ideals. 

\begin{proposition}\label{prop:restriction and power switch}
Let $I$ be a monomial ideal.

\begin{enumerate}
 \item The equality $I=I^{\leq w}$ holds for any monomial $w$ which is divisible by the least common multiple of the minimal monomial generators of $I$.
    \item If $m$ is a monomial and $n$ is a positive integer, then 
    \[(I^n)^{\leq m}=[(I^{\leq m})^n]^{\leq m}.\]
\end{enumerate}
\end{proposition}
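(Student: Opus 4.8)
The plan is to handle the two parts separately, with part (1) being essentially immediate and part (2) reducing to a statement about individual monomials. For part (1), I would observe that if $w$ is divisible by $\lcm(\mingens(I))$, then every minimal generator $g$ of $I$ divides $\lcm(\mingens(I))$ and hence divides $w$. Therefore every element of $\mingens(I)$ appears among the generators that define $I^{\leq w}$, so $I^{\leq w} = I$ directly from the definition of the HHZ-subideal.

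For part (2), the core observation I would isolate first is the following divisibility principle: for any monomial ideal $J$ and any monomial $u$ with $u \mid m$, one has $u \in J^{\leq m}$ if and only if $u \in J$. The reverse implication is clear since $J^{\leq m} \subseteq J$. For the forward direction, if $u \in J$ and $u \mid m$, then $u$ is divisible by some $g \in \mingens(J)$; since $g \mid u \mid m$, the generator $g$ itself divides $m$ and hence belongs to the generating set of $J^{\leq m}$, so $u \in J^{\leq m}$.

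Both sides of the claimed equality are HHZ-subideals with respect to $m$, hence generated by monomials dividing $m$; so it suffices to check that they contain exactly the same monomials $u$ dividing $m$. Applying the divisibility principle with $J = I^n$ and with $J = (I^{\leq m})^n$, the equality reduces to showing that for every monomial $u \mid m$,
\[
u \in I^n \iff u \in (I^{\leq m})^n.
\]
The implication $\Leftarrow$ follows at once from $I^{\leq m} \subseteq I$. For $\Rightarrow$, I would write the condition $u \in I^n$ as saying that $u$ is divisible by a product $g_{i_1}\cdots g_{i_n}$ of $n$ (not necessarily distinct) elements of $\mingens(I)$. Since this product divides $u$ and $u \mid m$, each factor $g_{i_j}$ divides $m$ and thus lies in $I^{\leq m}$; hence $g_{i_1}\cdots g_{i_n} \in (I^{\leq m})^n$, and therefore $u \in (I^{\leq m})^n$.

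I do not anticipate a genuine obstacle here. The only point requiring care is to phrase the whole argument in terms of membership of monomials dividing $m$ rather than in terms of minimal generators, since the minimal generators of $I^n$ and of $(I^{\leq m})^n$ need not be related in any transparent way. Routing the argument through the divisibility principle sidesteps having to track the minimal generators of these powers explicitly.
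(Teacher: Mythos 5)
Your proof is correct and follows essentially the same route as the paper's: both arguments come down to the observation that if a product of $n$ minimal generators of $I$ divides a monomial that divides $m$, then each factor divides $m$ and hence lies in $I^{\leq m}$, while the reverse inclusion uses $I^{\leq m}\subseteq I$. Your "divisibility principle" is just a clean repackaging of the bookkeeping the paper does inline, so no substantive difference.
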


\begin{proof}
(1) is immediate from the definition. To prove (2), let $\mingens({I^{\leq m}})=\{u_1,\dots ,u_r\}=U$ and $\mingens(I)=\{v_1,\dots ,v_s\}=V$. First, suppose $w$ is a monomial in $(I^n)^{\leq m}$. Then $v_{i_1}\dots v_{i_n}$ divides $w$ for some $i_1,\dots ,i_n$ with $v_{i_1}\dots v_{i_n}\mid~m$. Then $\{v_{i_1},\dots ,v_{i_n}\}\subseteq U$ and $w$ is a monomial in $[(I^{\leq m})^n]^{\leq m}$.

Now, suppose $w$ is a monomial in $[(I^{\leq m})^n]^{\leq m}$. Then $u_{j_1}\dots u_{j_n}$ divides $w$ for some $j_1,\dots ,j_n$ with $u_{j_1}\dots u_{j_n}\mid~m$. Since $U\subseteq V$, it is clear that $w$ is a monomial in $(I^n)^{\leq m}$.
\end{proof}

The application of the The Restriction Lemma to squarefree powers of edge ideals of induced subgraphs was given in \cite[Corollary~1.3]{EREY2022105585}. We include the proof of the restriction process below for completeness.

\begin{lemma}\label{lem:HHZ-Scarf-squarefreepower}
    (Restriction Lemma for the Scarf complexes of squarefree powers) Let $H$ be an induced subgraph of $G$, and let $n$ be a positive integer. Let $m$ be the product of all vertices of $H$. Then,
    \[I(H)^{[n]}=( I(G)^{[n]})^{\leq m}.\]
    In particular, if $I(G)^{[n]}$ is Scarf, then so is $I(H)^{[n]}$.
\end{lemma}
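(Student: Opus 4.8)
The plan is to reduce the asserted equality to a short chain of restriction identities and then invoke the Restriction Lemma for Scarf complexes (Lemma~\ref{lem:HHZ-Taylor-Scarf}) for the final sentence. The ``in particular'' clause is immediate once $I(H)^{[n]}=(I(G)^{[n]})^{\leq m}$ is established: if $I(G)^{[n]}$ is Scarf, then Lemma~\ref{lem:HHZ-Taylor-Scarf} shows its restriction $(I(G)^{[n]})^{\leq m}$ is Scarf, and this restriction is precisely $I(H)^{[n]}$.

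For the equality itself, I would first record the elementary monotonicity of restriction: whenever $m\mid w$ and $J$ is any monomial ideal, one has $(J^{\leq w})^{\leq m}=J^{\leq m}$, since $\mingens(J^{\leq w})$ consists exactly of the generators of $J$ dividing $w$, and among these the ones dividing $m$ are exactly the generators of $J$ dividing $m$ (using $m\mid w$). Applying this with $w=\prod_{x\in V(G)}x$, and using both $I(G)^{[n]}=(I(G)^n)^{\leq w}$ and $m\mid w$, gives $(I(G)^{[n]})^{\leq m}=(I(G)^n)^{\leq m}$.

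Next I would invoke Proposition~\ref{prop:restriction and power switch}(2) to rewrite $(I(G)^n)^{\leq m}=\big[(I(G)^{\leq m})^n\big]^{\leq m}$. The crucial step is then the identification $I(G)^{\leq m}=I(H)$: the generators of $I(G)$ dividing $m=\prod_{x\in V(H)}x$ are exactly the edges $xy$ of $G$ with $x,y\in V(H)$, and because $H$ is an \emph{induced} subgraph these are precisely the edges of $H$. This is the one place where the induced hypothesis is essential---for a non-induced subgraph the restriction would pick up edges of $G$ absent from $H$. Substituting yields $(I(G)^n)^{\leq m}=[I(H)^n]^{\leq m}$, and since $m$ is the product of \emph{all} vertices of $H$, the right-hand side is by definition $I(H)^{[n]}$, completing the chain.

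The argument is almost entirely formal, so the main thing to get right is the identity $I(G)^{\leq m}=I(H)$: one must verify carefully that ``induced'' forbids any spurious edges. As a sanity check and guiding intuition, one can also argue combinatorially without Proposition~\ref{prop:restriction and power switch}: a minimal generator of $I(G)^{[n]}$ is the product of the $2n$ vertices of a size-$n$ matching of $G$, and it divides $m$ if and only if all those vertices lie in $V(H)$; since $H$ is induced, such matchings are exactly the size-$n$ matchings of $H$, whose associated monomials form $\mingens(I(H)^{[n]})$. I would present the formal chain as the main proof and keep the combinatorial description as motivation.
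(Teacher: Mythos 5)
Your proof is correct and follows essentially the same route as the paper: both reduce the equality to the chain combining $I(G)^{\leq m}=I(H)$, Proposition~\ref{prop:restriction and power switch}(2), and the definition of squarefree powers as restrictions, then invoke Lemma~\ref{lem:HHZ-Taylor-Scarf} for the Scarf conclusion. Your explicit verification of the transitivity identity $(J^{\leq w})^{\leq m}=J^{\leq m}$ for $m\mid w$ is a detail the paper leaves implicit (``it follows from the definition''), but the argument is the same.
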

\begin{proof} Since $I(G)^{\leq m}=I(H)$ it follows from the definition and Proposition~\ref{prop:restriction and power switch}(2) that
    \[
    I(H)^{[n]} = ( I(H)^{n})^{\leq m} = [ (I(G)^{\leq m})^{n}]^{\leq m}= ( I(G)^{n} )^{\leq m} = ( I(G)^{[n]})^{\leq m}.
    \]
    The last statement of the lemma then follows from Lemma~\ref{lem:HHZ-Taylor-Scarf}.
\end{proof}

The goal of this section is the full characterization of pairs $(G,n)$ such that the squarefree power $I(G)^{[n]}$ is Scarf. Since $I(G)^{[1]}=I(G)$, we already have a full characterization in this case \cite[Theorem~8.3]{FHHM24}. For the rest of the section, we assume that $n\geq 2$. 

Next, we characterize when two monomial generators of $I(G)^{[n]}$ form an edge in its Scarf complex. For any monomial $u$, we denote by $\supp(u)$, either the set of variables dividing $u$ or the product of such variables, depending on the context. 

\begin{lemma}[Scarf edge lemma]\label{lem:Scarf-edge}
    Let $G$ be a graph and let $n\geq 2$. Let $m_1$ and $m_2$ be distinct generators in $\mingens( I(G)^{[n]})$. If $\{m_1,m_2\}$ is Scarf, then $\deg (\lcm(m_1,m_2))=2n+1$.
\end{lemma}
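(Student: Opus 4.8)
The plan is to reduce the statement to a combinatorial claim about matchings and then produce an explicit ``third matching''. First I would note that $\deg(\lcm(m_1,m_2)) = |\supp(m_1)\cup\supp(m_2)| \geq 2n+1$ holds automatically, since $m_1\neq m_2$ are distinct squarefree monomials of degree $2n$ (each corresponding to an $n$-matching $M_i$ whose covered vertices are exactly $\supp(m_i)$). So it suffices to rule out $\deg(\lcm(m_1,m_2)) \geq 2n+2$ under the Scarf hypothesis. For this I would use that a two-element face $\{m_1,m_2\}$ is Scarf if and only if no other element of $\mingens(I(G)^{[n]})$ divides $\lcm(m_1,m_2)$: such an $m_3$ would make $\{m_1,m_2,m_3\}$ a face carrying the same label $\lcm(m_1,m_2,m_3)=\lcm(m_1,m_2)$, destroying uniqueness; conversely any face competing for the label must contain such an $m_3$, as proper subfaces of $\{m_1,m_2\}$ carry strictly smaller labels. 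Thus the lemma reduces to the following claim: \emph{if $M_1,M_2$ are distinct $n$-matchings of $G$ with $|\supp(m_1)\cup\supp(m_2)|\geq 2n+2$, then there is a third $n$-matching $M_3$ using only vertices of $V:=\supp(m_1)\cup\supp(m_2)$ and distinct from both.}

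To build $M_3$ I would analyze the union graph $H=M_1\cup M_2$, whose maximum degree is $2$, so its components are paths and (necessarily even, alternating) cycles, and whose common edges $M_1\cap M_2$ occur as isolated single-edge paths. Writing $|V|=2n+t$ and $c=|M_1\cap M_2|$, the identity $(\#\text{vertices})-(\#\text{edges})=(\#\text{path components})$ for a disjoint union of paths and cycles gives exactly $t+c$ path components, hence exactly $t\geq 2$ \emph{non-common} path components, together with possibly some cycles. The key idea is that $M_3$ will be obtained from $M_1$ by swapping $M_1$-edges for $M_2$-edges on a carefully chosen \emph{proper, nonempty} collection of non-common components, chosen so the total edge count stays $n$. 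Any such $M_3$ uses only edges of $M_1\cup M_2\subseteq E(G)$, lies inside $V$, differs from $M_1$ on the swapped part, and differs from $M_2$ on any unswapped non-common component.

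The construction splits by the parities of the non-common components. If one of them is an even path or a cycle, its $M_1$- and $M_2$-parts have equal size, so swapping that single component preserves the count; since $t\geq 2$ an unswapped non-common component remains, giving $M_3\neq M_1,M_2$. Otherwise every non-common component is an odd path, and a counting argument (the $M_1$- and $M_2$-restrictions of the symmetric difference have equal size, while each odd path contributes $\pm 1$ to the difference) shows the number of $M_1$-heavy odd paths equals the number of $M_2$-heavy ones; in particular the number of odd paths is even and at least $2$. If there are at least $4$, swapping one $M_1$-heavy and one $M_2$-heavy odd path preserves the count and leaves at least two components unswapped, again producing a valid $M_3$.

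The main obstacle, and the only place I expect to invoke $n\geq 2$, is when the symmetric difference consists of exactly two odd paths $P_a$ ($M_1$-heavy) and $P_b$ ($M_2$-heavy), where swapping both merely returns $M_2$. Here I would instead keep $P_a$ at its (unique) perfect matching and replace $M_1$'s choice on $P_b$ by a \emph{different} sub-matching of $P_b$ of the same size, which exists as soon as $P_b$ has more than one edge; this preserves the total count and changes $M_1$ without reaching $M_2$ (since $M_3$ agrees with $M_1$ on $P_a$), and symmetrically if $P_a$ has more than one edge. The remaining possibility is that both $P_a,P_b$ are single edges $e_a,e_b$; then if some common edge $e$ exists, $(M_1\cap M_2\setminus\{e\})\cup\{e_a,e_b\}$ is the desired $M_3$, while if there is no common edge at all then $n=1$, contradicting $n\geq 2$. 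Combining these cases proves the combinatorial claim and hence the lemma.
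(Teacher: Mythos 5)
Your proposal is correct, but it takes a genuinely different route from the paper's. Both arguments start from the same reduction (which you justify correctly): a Scarf edge $\{m_1,m_2\}$ is exactly one admitting no third generator dividing $\lcm(m_1,m_2)$, so it suffices to produce, from two distinct $n$-matchings $M_1,M_2$ covering at least $2n+2$ vertices in total, a third $n$-matching supported inside $V=\supp(m_1)\cup\supp(m_2)$. The paper does this by a short direct construction: writing $m_1=(x_1x_2)\cdots(x_{2n-1}x_{2n})$, it splits on whether $\supp(m_2)\setminus\supp(m_1)$ contains an edge $yz$ of $G$, in which case one of $(x_3x_4)\cdots(x_{2n-1}x_{2n})(yz)$ and $(x_1x_2)\cdots(x_{2n-3}x_{2n-2})(yz)$ is the required third generator, or is independent, in which case some extra vertex $y$ must be matched in $M_2$ to a vertex of $\supp(m_1)$, say $x_1$, and $(x_3x_4)\cdots(x_{2n-1}x_{2n})(x_1y)$ works since its support exceeds $\supp(m_1)$ by only one vertex. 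You instead run the classical matching-theory analysis of the union graph $M_1\cup M_2$: decomposition into common single-edge components, alternating even paths and cycles, and alternating odd paths, followed by component swaps (or re-choosing a sub-matching inside an odd path) calibrated to keep the edge count at $n$; I checked the component count $t+c$, the heavy/light parity argument, the existence of an alternative size-$k$ sub-matching of an odd path with at least three edges, and the final two-single-edges case, and all are sound. What each buys: the paper's construction is shorter and more elementary, though somewhat ad hoc, and the role of $n\geq 2$ is less visible (it guarantees the two candidate matchings in the first case are distinct, so one of them avoids $m_2$); your argument is heavier in case analysis but systematic, and it isolates exactly where $n\geq 2$ is needed, namely when the symmetric difference is two disjoint single edges --- precisely the two-disjoint-edges configuration that makes the lemma fail for $n=1$.
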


\begin{proof}
    We will prove the contrapositive. Suppose that $\deg (\lcm(m_1,m_2))>2n+1$, i.e.,  
    \[ |\supp(m_2)\setminus \supp(m_1)|\geq 2.\]
    We will show that $\{m_1,m_2\}$ is not  Scarf. Without loss of generality, we assume that
    \[
    m_1=(x_1x_2)\cdots(x_{2n-1}x_{2n})
    \]
    for some edges $x_1x_2,\dots, x_{2n-1}x_{2n}$ of $G$. If there exist two vertices $y,z\in \supp(m_2)\setminus \supp(m_1)$ such that $yz$ is an edge of $G$, then set
    \begin{align*}
        m_3&\coloneqq (x_3x_4)\cdots (x_{2n-1}x_{2n}) (yz) \in \mingens( I(G)^{[n]}),\\
        m_4&\coloneqq (x_1x_2)\cdots (x_{2n-3}x_{2n-2}) (yz) \in \mingens( I(G)^{[n]}).
    \end{align*}
    It is clear that $m_1, m_3$, and $m_4$ are distinct generators. Then, either $m_3\neq m_2$ or $m_4\neq m_2$. Without loss of generality, assume that $m_3\neq m_2$. Since $\lcm(m_1,m_2)=\lcm(m_1,m_2,m_3)$, the edge $\{m_1,m_2\}$ is not Scarf, as desired. Now, we can assume that $\supp(m_2)\setminus \supp(m_1)$ is an independent set of $G$. We consider a vertex $y \in \supp(m_2)\setminus \supp(m_1)$. Since $y$ divides $m_2$, it must form an edge of $G$ with another vertex in $\supp(m_2)$, by our above assumption, without loss of generality, we can assume that $x_1y$ is an edge of $G$. Set
    \[
    m_3\coloneqq (x_3x_4)\cdots (x_{2n-1}x_{2n}) (x_1y) \in \mingens( I(G)^{[n]}). 
    \]
    It is clear that $m_3\neq m_1,m_2$ because $|\supp(m_2)\setminus \supp(m_1)|\geq 2$. Moreover, since $\lcm(m_1,m_2)=\lcm(m_1,m_2,m_3)$, we conclude that the set $\{m_1,m_2\}$ is not Scarf, as desired.
\end{proof}

We characterize all Scarf monomial ideals under some strict assumptions.

 \begin{lemma}\label{lem:Scarf-2-gens}
        Let $t$ be a positive integer and let $I$ be a squarefree monomial ideal of $\Bbbk [x_1,\dots, x_{t+1}]$ generated by monomials of degree $t$. Then $I$ is Scarf if and only if $I$ is minimally generated by at most two monomials.
\end{lemma}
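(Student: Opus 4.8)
The statement to prove is Lemma~\ref{lem:Scarf-2-gens}: for a squarefree monomial ideal $I \subseteq \Bbbk[x_1,\dots,x_{t+1}]$ generated in degree $t$, $I$ is Scarf if and only if $|\mingens(I)| \leq 2$. The backward direction is routine: if $I$ has one generator it is a free module and trivially Scarf, and if $I$ has exactly two generators $m_1, m_2$ then the Scarf complex coincides with the Taylor resolution (the only nontrivial multi-generator face is $\{m_1,m_2\}$, whose label $\lcm(m_1,m_2)$ is distinct from the two vertex labels), so $I$ is Taylor and hence Scarf by Lemma~\ref{lem:Taylor-implies-Scarf}. The real content is the forward direction, which I would prove by contraposition: assuming $|\mingens(I)| \geq 3$, I will exhibit a failure of the Scarf property.

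\textbf{Key structural observation.}
The crucial combinatorial feature is that any degree-$t$ squarefree monomial in $t+1$ variables is of the form $\frac{x_1\cdots x_{t+1}}{x_i}$, i.e., it omits exactly one variable. So I would identify each generator $m_i$ with the single variable $x_{\pi(i)}$ it omits; distinct generators omit distinct variables. Writing $w \coloneqq x_1\cdots x_{t+1}$ for the full product, the key identity is that for any generator $m$, $\supp(m) = w / x_{\pi(m)}$, and for any two distinct generators $m_i, m_j$ we have $\lcm(m_i, m_j) = w$ — because together they already use every variable. This is the engine: \emph{the lcm of any two distinct generators is always the full product $w$.} Once this is established, suppose $m_1, m_2, m_3$ are three distinct generators. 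Then $\lcm(m_1,m_2) = \lcm(m_1,m_2,m_3) = w$, so the label of the face $\{m_1,m_2\}$ is not unique — it is shared with the $3$-face $\{m_1,m_2,m_3\}$ (and with every other pair). Hence $\{m_1,m_2\}$ is not a Scarf face, and $I$ is not Scarf.

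\textbf{Obstacle and care points.}
The argument is short, so the main thing to get right is the bookkeeping rather than any deep difficulty. I want to make sure the degree-$t$-in-$(t{+}1)$-variables hypothesis is genuinely used to force $\lcm(m_i,m_j)=w$; without it two generators need not cover all variables. I should also handle the edge cases cleanly: the claim ``$|\mingens(I)|\le 2$'' must include the one-generator case in the backward direction. For the forward direction I only need the existence of three generators, and the conclusion that some face fails to have a unique label; phrasing this directly in terms of non-uniqueness of labels (rather than invoking the full differential) keeps it clean, and it immediately contradicts the Scarf property since Scarf faces are exactly those with unique labels. I would conclude by noting both implications together give the stated equivalence.
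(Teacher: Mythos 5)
Your backward direction and your key structural observation are both correct, and they match the paper's: every squarefree monomial of degree $t$ in $t+1$ variables omits exactly one variable, so the lcm of any two (indeed any $\geq 2$) distinct generators is the full product $w=x_1\cdots x_{t+1}$, and hence once $n\geq 3$ no face of cardinality $\geq 2$ has a unique label. The genuine gap is your final inference: from ``$\{m_1,m_2\}$ is not a Scarf face'' you conclude ``$I$ is not Scarf,'' justified by the claim that a repeated label ``immediately contradicts the Scarf property since Scarf faces are exactly those with unique labels.'' That principle is false. Being Scarf does not mean every face of the Taylor simplex has a unique label (that stronger condition would force the Scarf complex to equal the whole Taylor complex); it means that the Scarf complex --- the subcomplex of faces that \emph{do} have unique labels --- is the minimal free resolution. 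An ideal can be Scarf while having repeated labels: for $I=(xy,yz,zw)$, the edge ideal of a path of length $3$, one has $\lcm(xy,zw)=\lcm(xy,yz,zw)=xyzw$, so $\{xy,zw\}$ is not a Scarf face, yet the Scarf complex (the path with edges $\{xy,yz\}$ and $\{yz,zw\}$) is the minimal free resolution, so this $I$ is Scarf. The same phenomenon occurs for $(x^2,xy,y^2)$.

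So after showing that no face of cardinality $\geq 2$ is Scarf, you still owe an argument that the Scarf complex cannot be the minimal free resolution. This is a short but necessary step, and it is exactly how the paper concludes: since $n\geq 3$, the Scarf complex consists of the $n$ isolated vertices (and the empty face), which is disconnected; taking $m=w$ in Lemma~\ref{lem:BPS}, the restriction $\Delta_{\leq w}$ is the entire Scarf complex and is not acyclic, so the Scarf complex is not even a resolution of $S/I$, in particular not the minimal one, and $I$ is not Scarf. (Alternatively: $S/I$ with $n\geq 2$ generators has $\beta_2(S/I)\geq 1$, while your Scarf complex vanishes in homological degree $2$, so the two cannot coincide.) With that one step supplied, your argument is complete and coincides with the paper's proof.
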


    \begin{proof}
        If $I$ is generated by at most two generators, then its Taylor resolution is minimal, and hence $I$ is Scarf. Conversely, assume that $\mingens(I)=\{m_1,\dots, m_n\}$ where $n\geq 3$ and let $\sigma\subseteq \mingens(I)$ with $|\sigma|\geq 2$. Notice that the monomial $\lcm(\sigma)$ is of degree of at least $t+1$, and since it is squarefree, we must have $\lcm(\sigma) \mid x_1\dots x_{t+1}$. Therefore, $\lcm(\sigma)=x_1\dots x_{t+1}$, and in particular, since $n\geq 3$, the label $x_1 \cdots x_{t+1}$ is not unique. Thus, the Scarf complex of $I$~is
            \[
            0 \to \bigoplus_{i=1}^{t+1} S[\hat x_i] \to S[\varnothing] \to 0
            \]
        where $\hat x_i = \prod_{j\neq i}x_j$. The simplicial complex associated to this chain complex consists of $t+1$ disconnected points. By Lemma~\ref{lem:BPS}, it does not support a resolution of $S/I$. In particular, $I$ is not Scarf. This concludes the proof.
    \end{proof}

The strong assumptions in this lemma, even though strict, apply to the squarefree powers of edge ideals. In particular, this forces certain structures on $I(H)^{[n]}$ where $H$ is any induced subgraph of $G$ with $2n+1$ vertices. The following lemma will serve as a basis for the main result (Theorem~\ref{thm:Scarf-squarefree-powers}) of this section.

\begin{lemma}\label{lem:2n+1-vertices}
    Let $G$ be a graph on $2n+1$ vertices without isolated vertices. Let $2\leq n\leq m(G)$. Then the following statements are equivalent.
    \begin{enumerate}
        \item $I(G)^{[n]}$ is Scarf.
        \item $\left|\mingens(I(G)^{[n]})\right|=2$.
        \item The vertices of $G$ can be labeled with $x_1,\dots, x_{2n-1},y,z$ such that $G$ contains the edges $x_1x_2,\; x_3x_4,\; \dots,\; x_{2n-3}x_{2n-2},\; x_{2n-1}y,\; x_{2n-1}z$, and $y$, $z$ are leaves of $G$. 
        \item $\mingens(I(G)^{[n]})=\{m_1, m_2\}$ such that $\supp(m_1)\setminus\supp(m_2)=\{y\}$ and $\supp(m_2)\setminus\supp(m_1)=\{z\}$ for some leaves $y$ and $z$ of $G$ that are adjacent to the same joint.
    \end{enumerate}  
\end{lemma}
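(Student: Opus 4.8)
The plan is to prove the four statements equivalent by a cycle of implications, using the structural lemmas already established. I would organize it as $(1)\Rightarrow(2)\Rightarrow(3)\Rightarrow(4)\Rightarrow(1)$, with the Scarf edge lemma (Lemma~\ref{lem:Scarf-edge}) and the two-generator lemma (Lemma~\ref{lem:Scarf-2-gens}) doing most of the work.

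\textbf{The implication $(1)\Rightarrow(2)$.} Here I would apply Lemma~\ref{lem:Scarf-2-gens} directly. Since $G$ has $2n+1$ vertices, each generator of $I(G)^{[n]}$ is a squarefree monomial of degree $2n$ in the ring $\Bbbk[x_1,\dots,x_{2n+1}]$, so setting $t=2n$ the hypotheses of Lemma~\ref{lem:Scarf-2-gens} are met. Thus $I(G)^{[n]}$ Scarf forces $\left|\mingens(I(G)^{[n]})\right|\le 2$; and since $n\le m(G)$ guarantees at least one generator, while a single generator would leave some vertex isolated (contradicting the no-isolated-vertices hypothesis together with needing all $2n+1$ vertices covered)—actually I should argue that exactly two generators occur. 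One generator covers only $2n$ of the $2n+1$ vertices, leaving one vertex uncovered by \emph{any} edge in the unique matching; but that vertex is non-isolated so it lies on some edge of $G$, which can be extended to a second matching of size $n$, giving a second generator. Hence $\left|\mingens(I(G)^{[n]})\right|=2$.

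\textbf{The implications $(2)\Rightarrow(3)$ and $(3)\Leftrightarrow(4)$.} Assuming exactly two generators $m_1,m_2$, I would analyze how two perfect-matchings-minus-one-vertex on $2n+1$ vertices can relate. Each $m_i$ is a product of $n$ disjoint edges omitting exactly one vertex; call the omitted vertices $z$ (for $m_1$) and $y$ (for $m_2$). If $y=z$ the two matchings would live on the same $2n$ vertices, and a standard symmetric-difference argument on two distinct perfect matchings of a $2n$-vertex graph would produce an alternating cycle, hence a \emph{third} matching of size $n$, contradicting $(2)$; so $y\neq z$. Then $\supp(m_1)\setminus\supp(m_2)=\{y\}$ and $\supp(m_2)\setminus\supp(m_1)=\{z\}$, which is statement $(4)$ once I verify $y,z$ are leaves with a common joint. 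To see this, the vertex $y$ (covered by $m_1$ but omitted by $m_2$) is matched in $m_1$ to some vertex, and comparing the two matchings forces almost all edges to agree, isolating a single swapped edge at a common vertex $x_{2n-1}$; that $y$ and $z$ have no other neighbours follows because any extra edge at $y$ or $z$ would again generate a third size-$n$ matching. The equivalence $(3)\Leftrightarrow(4)$ is then essentially a relabelling: condition $(3)$ is the explicit edge-list form of the structure described abstractly in $(4)$.

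\textbf{The implication $(4)\Rightarrow(1)$.} With $\mingens(I(G)^{[n]})=\{m_1,m_2\}$, the Taylor resolution has length at most $2$, and by Lemma~\ref{lem:Scarf-2-gens} (or directly, since a two-generator Taylor resolution is automatically minimal) the ideal is Scarf. The main obstacle I anticipate is the $(2)\Rightarrow(3)$ step: ruling out $y=z$ and pinning down the leaf/common-joint structure requires a careful symmetric-difference (alternating path/cycle) argument showing that any deviation from the prescribed shape yields a third matching of size $n$, contradicting the two-generator hypothesis. The Scarf edge lemma tells us that if $\{m_1,m_2\}$ is a Scarf edge then $\deg\lcm(m_1,m_2)=2n+1$, i.e.\ the symmetric difference of supports is a single pair $\{y,z\}$, which I would invoke to streamline this matching-theoretic analysis.
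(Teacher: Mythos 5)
Your overall architecture (a cycle $(1)\Rightarrow(2)\Rightarrow(3)\Rightarrow(4)\Rightarrow(1)$) is sound, and two of the steps are fine: $(1)\Rightarrow(2)$ via Lemma~\ref{lem:Scarf-2-gens}, including your extra argument ruling out a single generator (a point the paper's proof glosses over), and the trivial $(4)\Rightarrow(1)$. The genuine gap is in $(2)\Rightarrow(3)$, and it stems from conflating matchings (edge sets) with minimal generators of $I(G)^{[n]}$ (monomials, i.e.\ vertex supports): several size-$n$ matchings can have the same support and hence give the \emph{same} generator, so producing ``another matching'' is not the same as producing a third generator. Concretely: (i) in your $y=z$ case, if the symmetric difference of the two perfect matchings is a single alternating cycle $C$, then $M_1\triangle C=M_2$ and no new matching arises at all; and even when there are several cycles, swapping along one yields a matching with the \emph{same} vertex support as $M_1$, hence the same monomial $m_1$ --- no contradiction follows. (The correct way to rule out $y=z$ is the same swap you used in $(1)\Rightarrow(2)$: the omitted vertex is not isolated, so it can be swapped into a matching that \emph{covers} it, whose support genuinely differs from both generators.) (ii) Your claim that the two matchings must ``agree on almost all edges'' is false: take $G$ to be a $4$-cycle $abcd$ together with a path on $e,f,g$ (edges $ef$, $fg$), so $n=3$; then $I(G)^{[3]}=(abcdef,\, abcdfg)$ has exactly two generators, yet the representing matchings $\{ab,cd,ef\}$ and $\{bc,ad,fg\}$ share no edge. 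What is actually forced is a statement about supports: the alternating $y$--$z$ path in the symmetric difference must have length exactly $2$ (a longer path lets you swap a prefix and omit an interior vertex, giving a generator with a new support), while alternating \emph{cycles} can genuinely occur and are harmless.

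The paper's proof sidesteps these issues. Since a two-generated monomial ideal is automatically Scarf and $\{m_1,m_2\}$ is then a Scarf edge, Lemma~\ref{lem:Scarf-edge} immediately gives that the supports differ in one vertex each (this is the streamlining you gesture at in your last paragraph), and the leaf/common-joint structure is then pinned down by exhibiting explicit third matchings whose \emph{supports} omit a vertex other than $y$ and $z$: for instance, if $yz\in E(G)$ then $(x_1x_2)\cdots(x_{2n-3}x_{2n-2})(yz)$ omits $x_{2n-1}$, and if $x_1z\in E(G)$ then $(x_1z)(x_3x_4)\cdots(x_{2n-1}y)$ omits $x_2$. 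Your plan is repairable along exactly these lines --- keep the alternating-path analysis for the common joint, but phrase every contradiction in terms of supports rather than edge sets --- but as written the two claims above are incorrect.
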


\begin{proof}
    It is straightforward to show that (3)$\iff$(4). The equivalence of (1) and (2) follows from Lemma~\ref{lem:Scarf-2-gens}. If (3) holds, then $I(G)^{[n]} = (\prod_{i=1}^{2n-1}x_i)(y,z)$ which implies (2).

    We only need to show that (1)$\implies$(3). Assume that $I(G)^{[n]}=(m_1,m_2)$ is Scarf. By Lemma~\ref{lem:Scarf-edge}, we may assume that $\supp(m_1)\setminus \supp(m_2)=\{y\}$ and $\supp(m_2)\setminus \supp(m_1)=\{z\}$ for some vertices $y$ and $z$ since the generators $m_1$ and $m_2$ have $2n-1$ vertices in common. Suppose that 
    \[m_1=(x_1x_2)\cdots(x_{2n-3}x_{2n-2})(x_{2n-1}y)\]
    where $x_1x_2,\dots, x_{2n-3}x_{2n-2}, x_{2n-1}y$ are edges of $G$. We claim that $x_{2n-1}$ is the only vertex of $G$ that is adjacent to $z$. If $yz$ is an edge of $G$, then 
    \[
    m_1,m_2\neq (x_1x_2)\cdots (x_{2n-3}x_{2n-2})(yz) \in I(G)^{[n]},
    \]
    a contradiction to (3). Without loss of generality, suppose that $x_1z$ is an edge of $G$. Then
    \[
    m_1,m_2\neq (x_1z)(x_3x_4)\cdots (x_{2n-1}y) \in I(G)^{[n]},
    \]
    a contradiction to (3). Thus $z$ is a leaf of $G$ and it is adjacent to $x_{2n-1}$. By the symmetry, it follows that $y$ is also a leaf of $G$.
\end{proof}

We recall that a \emph{perfect matching} of a graph is a matching that involves all of its vertices. It turns out that another restriction for when $I(G)^{[n]}$ is Scarf is the matching number $m(G)$ of $G$. We show that among the nonvanishing squarefree powers, only the highest one can be Scarf. 

\begin{lemma}\label{lem:matching-number-Erey}
    Let $2\leq n\leq m(G)$. If $I(G)^{[n]}$ is Scarf, then $m(G)=n$.
\end{lemma}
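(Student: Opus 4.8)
The plan is to argue by contradiction: assume $I(G)^{[n]}$ is Scarf but $m(G) > n$, and use the Restriction Lemma (Lemma~\ref{lem:HHZ-Scarf-squarefreepower}) together with the structural classification in Lemma~\ref{lem:2n+1-vertices} to derive a contradiction. Since $m(G) \geq n+1$, the graph $G$ contains a matching of size $n+1$, say on the $2n+2$ vertices $x_1, \dots, x_{2n+2}$ with edges $x_1x_2, \dots, x_{2n+1}x_{2n+2}$. I would like to restrict attention to a well-chosen induced subgraph on $2n+1$ vertices so that I can invoke Lemma~\ref{lem:2n+1-vertices}, since that lemma precisely characterizes the Scarf property for graphs with exactly $2n+1$ vertices.

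First I would pass to an induced subgraph $H$ on $2n+1$ vertices whose squarefree $n$-th power must then also be Scarf by the final statement of Lemma~\ref{lem:HHZ-Scarf-squarefreepower}. The natural candidate is to take the $2n+1$ vertices $x_1, \dots, x_{2n+1}$ (dropping one endpoint of the last matching edge) and let $H$ be the subgraph of $G$ induced on this set; I must first check $H$ has no isolated vertices among the relevant ones, or handle such vertices, so that Lemma~\ref{lem:2n+1-vertices} applies to the graph obtained after removing any isolated vertices. Note $H$ still contains the matching $x_1x_2, \dots, x_{2n-1}x_{2n}$ of size $n$, so $n \leq m(H)$ and $I(H)^{[n]} \neq 0$. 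By Lemma~\ref{lem:HHZ-Scarf-squarefreepower}, $I(H)^{[n]}$ is Scarf, so by Lemma~\ref{lem:2n+1-vertices}, $H$ must have the very rigid form described in part~(3): it is essentially a matching of $n-1$ edges together with a "cherry" $x_{2n-1}y, x_{2n-1}z$ where $y,z$ are leaves.

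The key tension I would then exploit is that this rigid structure on $H$ is incompatible with $x_{2n+1}$ having a matching partner $x_{2n+2}$ back in $G$. The point is that in $H$, the vertex $x_{2n+1}$ is forced to be either a leaf of the cherry or an endpoint of one of the $n-1$ matching edges, and in either case its neighbours inside $H$ are severely constrained; meanwhile $x_{2n+1}$ is matched to $x_{2n+2}$ in $G$. The cleanest way to generate the contradiction is to choose which vertex to delete so that the retained $2n+1$ vertices produce a subgraph $H$ that cannot satisfy condition~(3). Concretely, I expect that by varying the deleted vertex among $\{x_{2n+1}, x_{2n+2}\}$ and possibly including a vertex of the extra matching edge, one obtains an induced subgraph supporting a matching of size $n$ together with an extra edge disjoint enough to force $\left|\mingens(I(H)^{[n]})\right| \geq 3$, contradicting condition~(2) of Lemma~\ref{lem:2n+1-vertices}. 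For instance, if $H$ contains two matchings of size $n$ that differ by swapping one edge, these produce two distinct generators whose support union already forces a third generator, violating the two-generator constraint.

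The main obstacle I anticipate is the bookkeeping in selecting the induced subgraph on exactly $2n+1$ vertices and verifying it has no isolated vertices while retaining enough edges to violate condition~(3) or~(2) of Lemma~\ref{lem:2n+1-vertices}. Because $G$ may have many edges beyond the guaranteed matching of size $n+1$, I must be careful that the induced structure on the chosen $2n+1$ vertices genuinely contains a configuration richer than the "matching-plus-cherry" normal form — the danger is that the restriction accidentally lands exactly in the allowed form. The resolution is to use the freedom in the $(n+1)$-matching: the two "free" matching edges $x_{2n-1}x_{2n}$ and $x_{2n+1}x_{2n+2}$ give at least three distinct size-$n$ matchings within a suitable $2n+1$-vertex induced subgraph, which forces at least three minimal generators and contradicts the equivalence (1)$\iff$(2) of Lemma~\ref{lem:2n+1-vertices}.
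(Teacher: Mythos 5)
There is a genuine gap: your entire contradiction is supposed to come from restricting to an induced subgraph on $2n+1$ vertices and invoking Lemma~\ref{lem:2n+1-vertices}, but such a restriction can never detect the failure of Scarfness in the most basic case. Take $G$ to be the disjoint union of $n+1$ edges, $x_1x_2,\dots,x_{2n+1}x_{2n+2}$. Then $m(G)=n+1>n$, and $I(G)^{[n]}$ is not Scarf: its generators are exactly the $n+1\geq 3$ monomials $m_i$ obtained by omitting the $i$-th edge, and every subset of two or more generators has the same lcm, namely $x_1\cdots x_{2n+2}$, so no face of dimension $\geq 1$ is Scarf; the Scarf complex is a set of $n+1$ isolated vertices, which is disconnected, hence not a resolution by Lemma~\ref{lem:BPS}. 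Yet every induced subgraph $H$ on $2n+1$ of these vertices is a matching of size $n$ plus one isolated vertex, so $I(H)^{[n]}$ is principal and therefore Scarf. Thus your key claim --- that the two ``free'' edges $x_{2n-1}x_{2n}$ and $x_{2n+1}x_{2n+2}$ produce at least three size-$n$ matchings inside some $(2n+1)$-vertex induced subgraph, forcing $\left|\mingens(I(H)^{[n]})\right|\geq 3$ --- is simply false here: each such subgraph contains exactly one size-$n$ matching. The non-Scarfness of $(n+1)K_2$ is a global phenomenon (repeated lcm labels among three or more generators) that is invisible in every proper restriction, so no amount of care in choosing which vertex to delete can rescue the plan.

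This is precisely why the paper's proof has a different architecture. It first restricts to the $2n+2$ vertices of the $(n+1)$-matching (not $2n+1$), then splits into cases. If the $m_i$ are the only generators, it argues directly that the Scarf complex has no edges (via Lemma~\ref{lem:Scarf-edge}) and is therefore disconnected --- exactly the case your approach cannot see. Otherwise some generator $\mathbf{m}$ forms a Scarf edge with $m_{n+1}$, and only at this point do restrictions to $2n+1$ vertices enter (this ingredient does appear in your proposal): Lemma~\ref{lem:2n+1-vertices} pins down $\mathbf{m}$ and shows every generator meets $\{x_{2n-1},x_{2n},x_{2n+1},x_{2n+2}\}$ in either $2$ or $4$ vertices. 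The contradiction is then extracted globally: since a Scarf ideal has an acyclic, hence connected, Scarf complex, there is a path of Scarf edges from $m_1$ to $m_{n+1}$; along any Scarf edge the supports differ by one vertex on each side, so the intersection count with that $4$-set changes by at most one and must stay equal to $4$ along the whole path, while it equals $2$ at $m_{n+1}$. Your proposal is missing both the degenerate case and this connectivity/parity argument, and without them the proof does not go through.
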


\begin{proof}
    We claim that if $G$ has a matching of size $n+1$, then $I(G)^{[n]}$ is not Scarf. By restricting to the vertices in this matching and using Proposition~\ref{lem:HHZ-Scarf-squarefreepower}, we may assume for a contradiction that $G$ has a perfect matching of size $n+1$ but $I(G)^{[n]}$ is Scarf. Let $x_1,\dots ,x_{2n+2}$ be the distinct vertices of $G$ such that $x_1x_2, x_3x_4,\dots , x_{2n+1}x_{2n+2}$ form the edges of the perfect matching of $G$. 
    
    For each $1\leq i\leq n+1$, set  
    \[
    m_i= \frac{\prod_{j=1}^{2n+2}x_j}{x_{2i-1}x_{2i}}= (x_1x_2) \cdots \widehat{(x_{2i-1}x_{2i})} \cdots (x_{2n+1}x_{2n+2}) \in I(G)^{[n]}.
    \]
    If these are the only generators of $I(G)^{[n]}$, then its Scarf complex does not contain any edge due to Lemma~\ref{lem:Scarf-edge}, and thus is not acyclic over $\Bbbk$. By Lemma~\ref{lem:BPS}, the ideal $I(G)^{[n]}$ is not Scarf, a contradiction. Thus we can assume that there exists a generator $\mathbf{m}\in I(G)^{[n]}$ such that $\{m_{n+1},\mathbf{m}\}$ is a Scarf edge. By Lemma~\ref{lem:Scarf-edge}, $m_{n+1} $ and $\mathbf{m}$ share $2n-1$ vertices, or in other words, they contain $2n+1$ vertices in total. Without loss of generality, assume that these $2n-1$ vertices are $x_1,\dots, x_{2n-1}$, and that $x_{2n+1}$ divides $\mathbf{m}$. Let $H$  be the induced subgraph of $G$ on $V(G)\setminus \{x_{2n+2}\}$. By Lemma~\ref{lem:HHZ-Scarf-squarefreepower}, the ideal $I(H)^{[n]}$ is Scarf. Lemma~\ref{lem:2n+1-vertices} implies that
    \[
    \mathbf{m}=(x_1x_2)\cdots (x_{2n-3}x_{2n-2})(x_{2n-1}x_{2n+1}),
    \]
    where $x_{2n}$ and $x_{2n+1}$ are leaves of $H$ with the common joint $x_{2n-1}$. Using the same argument for the induced subgraph of $G$ on $V(G)\setminus \{x_{2n}\}$, we deduce that $x_{2n-1}$ and $x_{2n+2}$ do not form an edge with any of the vertices $x_1,\dots, x_{2n-2}$ in $G$. In particular, this means that
    \begin{equation}\label{0,2,4}
        |\{x_{2n-1},x_{2n},x_{2n+1},x_{2n+2}\}\cap \supp(\mathbf{u})| \in \{2,4\}
    \end{equation}
    for any minimal generator $\mathbf{u}$ of $I(G)^{[n]}$. Now, consider the vertices $m_1$ and $m_{n+1}$ in the Scarf complex of $I(G)^{[n]}$. By Lemma~\ref{lem:BPS}, this simplicial complex, over $\Bbbk$, is acyclic, and thus there exists a path in this complex that connects the two vertices $m_1$ and $m_{n+1}$, i.e., there exist Scarf edges 
    \[\{m_1,\mathbf{m}_1\}, \{\mathbf{m}_1,\mathbf{m}_2\}, \dots, \{\mathbf{m}_\ell,m_{n+1}\}\] 
    for some integer $\ell$, and some generators $\mathbf{m}_1, \dots, \mathbf{m}_\ell \in I(G)^{[n]}$. Since $n\geq 2$ we have
    \[
    |\{x_{2n-1},x_{2n},x_{2n+1},x_{2n+2}\}\cap \supp(m_1)| = 4.
    \]
    By Lemma~\ref{lem:Scarf-edge}, the generators $\mathbf{m}_1$ and $m_1$ share $2n-1$ vertices, and thus by ($\ref{0,2,4}$), we have
    \[
    |\{x_{2n-1},x_{2n},x_{2n+1},x_{2n+2}\}\cap \supp(\mathbf{m}_1)| = 4.
    \]
    Repeating this argument, we obtain
    \[
    |\{x_{2n-1},x_{2n},x_{2n+1},x_{2n+2}\}\cap \supp(\mathbf{m}_i)| = 4
    \]
    for any $1\leq i\leq \ell$, and also
    \[
    2=|\{x_{2n-1},x_{2n},x_{2n+1},x_{2n+2}\}\cap \supp(m_{n+1})| = 4,
    \]
    which is a contradiction, as desired.
\end{proof}

Next, we will construct a graph $G$ with $2n+2$ vertices such that $I(G)^{[n]}$ is not Scarf. This particular graph will serve as a forbidden structure in the proof of our main theorem.

\begin{lemma}\label{lem:forbidden-squarefree}
    Let $n\geq 2$ and let $G$ be a graph on $2n+2$ vertices $V(G)=\{x_1,\dots, x_{2n-2},y_1,y_2, y_3, y_4\}$ such that $G$ contains the edges 
    \[
    x_1x_2, x_3x_4, \dots, x_{2n-5}x_{2n-4}, x_{2n-3}y_1, x_{2n-3} y_2,  x_{2n-2}y_3, x_{2n-2}y_4.
    \]
    Then $I(G)^{[n]}$ is not Scarf.
\end{lemma}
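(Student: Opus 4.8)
The plan is to show that $I(G)^{[n]}$ has only four minimal generators and that its Scarf complex is forced to be a topological circle, which cannot support a resolution. First I would enumerate the minimal generators, which correspond to the size-$n$ matchings of $G$. The edges $x_1x_2, x_3x_4, \dots, x_{2n-5}x_{2n-4}$ form a matching of size $n-2$ on $\{x_1,\dots,x_{2n-4}\}$, and these are the only edges among those vertices; the remaining edges form two ``cherries'', one centered at $x_{2n-3}$ with leaves $y_1,y_2$ and one centered at $x_{2n-2}$ with leaves $y_3,y_4$. Since the two edges of a cherry share their center, each cherry contributes at most one edge to any matching, so the largest matching of $G$ has size $(n-2)+1+1=n$, and a matching attains size $n$ exactly when it uses all of $x_1x_2,\dots,x_{2n-5}x_{2n-4}$ together with one edge from each cherry. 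Writing $Q=x_1\cdots x_{2n-2}$ for the product of all the $x_i$, this gives precisely four minimal generators and the factorization
\[
I(G)^{[n]} = Q\cdot (y_1,y_2)(y_3,y_4) = Q\cdot(y_1y_3,\, y_1y_4,\, y_2y_3,\, y_2y_4).
\]

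By Lemma~\ref{lem:multiply-by-a-monomial}, $I(G)^{[n]}$ is Scarf if and only if $K\coloneqq(y_1y_3,y_1y_4,y_2y_3,y_2y_4)$ is Scarf, so I would reduce to showing that $K$ --- the edge ideal of the $4$-cycle on $y_1,y_3,y_2,y_4$ --- is not Scarf. Here I would compute the Scarf complex of $K$ directly from the pairwise least common multiples of its four generators. The two ``diagonal'' pairs $\{y_1y_3,y_2y_4\}$ and $\{y_1y_4,y_2y_3\}$ both carry the label $y_1y_2y_3y_4$, as do all triples and the full set; hence none of these faces has a unique label. The four remaining ``side'' pairs have pairwise distinct labels of degree $3$ (for instance $\lcm(y_1y_3,y_1y_4)=y_1y_3y_4$), each occurring exactly once, so they are Scarf. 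Consequently the Scarf complex of $K$ is exactly the $1$-dimensional complex with the four vertices and these four edges, i.e. the boundary of a square.

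Finally I would invoke Lemma~\ref{lem:BPS} with $m=y_1y_2y_3y_4$: the subcomplex $\Delta_{\le m}$ is the entire Scarf complex, which is a $4$-cycle --- connected but with $H_1\neq 0$, hence not acyclic. Therefore the Scarf complex is not a resolution, so $K$ is not Scarf, and neither is $I(G)^{[n]}$. (Alternatively one may bypass the reduction and apply the Scarf edge lemma, Lemma~\ref{lem:Scarf-edge}, directly to the four generators of $I(G)^{[n]}$: it rules out the two diagonal pairs because their $\lcm$ has degree $2n+2$, leaving the same $4$-cycle.) The only genuinely careful step is the generator count in the first paragraph --- verifying that the size-$n$ matchings are exactly the four described and that $I(G)^{[n]}$ has no other generator; once this is settled the remainder is a short finite computation, and the circle obstruction is the whole point.
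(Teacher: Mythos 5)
Your proof covers only the special case in which the listed edges are \emph{all} of the edges of $G$: your opening claim that ``these are the only edges among those vertices'' is not part of the hypothesis, which says that $G$ \emph{contains} the listed edges and may therefore have more. This generality is essential, not a technicality: in the proof of Theorem~\ref{thm:Scarf-squarefree-powers} the lemma is applied to the induced subgraph on $\{x_1,\dots,x_{2n},y_1,y_i\}$, where the adjacency of $y_i$ to some $x_j$ --- an extra edge on top of the forbidden pattern --- is exactly what is being ruled out. With extra edges your two key claims fail. For instance, if $x_1y_1\in E(G)$, then $\{x_1y_1,\,x_3x_4,\dots,x_{2n-5}x_{2n-4},\,x_{2n-3}y_2,\,x_{2n-2}y_3\}$ is a matching of size $n$ that avoids $x_2$, so $I(G)^{[n]}$ has the minimal generator $x_1x_3x_4\cdots x_{2n-2}\,y_1y_2y_3$, which is not of the form $Qy_ay_b$; the factorization $I(G)^{[n]}=Q\cdot(y_1,y_2)(y_3,y_4)$ and the ensuing circle computation collapse. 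If instead $y_1y_3\in E(G)$, then $m(G)=n+1$ and the generators proliferate in a different way. Neither situation is addressed by your argument.

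Handling these extra edges is precisely what occupies the paper's proof: it argues by contradiction, first invoking Lemma~\ref{lem:matching-number-Erey} to conclude $m(G)=n$ (disposing of cases like $y_1y_3\in E(G)$), then applying the restriction lemma (Lemma~\ref{lem:HHZ-Scarf-squarefreepower}) to the $(2n+1)$-vertex induced subgraphs on $V(G)\setminus\{y_4\}$ and $V(G)\setminus\{y_3\}$ together with Lemma~\ref{lem:2n+1-vertices} to force $y_1,y_2,y_3,y_4$ to be leaves of $G$ (disposing of cases like $x_1y_1\in E(G)$). Only after this does the factorization $I(G)^{[n]}=\bigl(\prod_{i=1}^{2n-2}x_i\bigr)(y_1y_3,y_1y_4,y_2y_3,y_2y_4)$ become available, and from that point on your endgame is correct --- indeed your direct verification via Lemmas~\ref{lem:multiply-by-a-monomial} and \ref{lem:BPS} that the Scarf complex of the $4$-cycle edge ideal is the non-acyclic boundary of a square is a nice self-contained substitute for the paper's citation of \cite[Theorem 8.3]{FHHM24}. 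But as written, your proof establishes the lemma only for the single graph with exactly the listed edges, which is not the statement and not enough for how the lemma is used.
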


\begin{proof}
    Assume for a contradiction $I(G)^{[n]}$ is Scarf. Then $m(G)=n$ by Lemma~\ref{lem:matching-number-Erey}. We claim that $y_1,y_2,y_3$ and $y_4$ are leaves of $G$. Let $H_1$ be the subgraph of $G$ on $V(G)\setminus \{y_4\}$. Then Lemmas~\ref{lem:HHZ-Scarf-squarefreepower} and \ref{lem:2n+1-vertices} imply that $y_1$ and $y_2$ are leaves of $H_1$. Similarly, considering the induced subgraph $H_2$ on $V(G)\setminus \{y_3\}$, we see that $y_1$ and $y_2$ are leaves of $H_2$. Therefore, $y_1$ and $y_2$ are leaves of $G$. By symmetry, it follows that $y_3$ and $y_4$ are leaves of $G$ as well. Having proved our claim, observe that 
        \[
        I(G)^{[n]} = \left(\prod_{i=1}^{2n-2}x_i \right)
        (y_1y_3, y_1y_4, y_2y_3, y_2y_4)
        \]
        which is the product of a monomial and the edge ideal of a $4$-cycle. Then $I(G)^{[n]}$ is not Scarf by Lemma~\ref{lem:multiply-by-a-monomial} and \cite[Theorem 8.3]{FHHM24}, a contradiction. 
\end{proof}

Now we are ready to prove the main theorem of this section.
\begin{theorem}\label{thm:Scarf-squarefree-powers}
Let $G$ be a graph and let $2\leq n\leq m(G)$. Then the following statements are equivalent.
    \begin{enumerate}
        \item $I(G)^{[n]}$ is Scarf.
        \item $I(G)^{[n]}$ is Taylor.
        \item Either $G$ has a perfect matching of size $n$, or the vertex set of $G$ can be labelled as $V(G)=\{x_1,\dots, x_{2n},y_1,\dots y_\ell\}$ for some $\ell\geq 1$ such that \[\{x_1x_2, x_3x_4, \dots, x_{2n-3}x_{2n-2}, x_{2n-1}x_{2n}\}\] is a matching of $G$, and the vertices $x_{2n},y_1,\dots ,y_\ell$ are leaves of $G$ with the common joint $x_{2n-1}$.
        \end{enumerate}
\end{theorem}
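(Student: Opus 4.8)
The plan is to prove the cycle of implications $(2)\Rightarrow(1)\Rightarrow(3)\Rightarrow(2)$. The implication $(2)\Rightarrow(1)$ is immediate from Lemma~\ref{lem:Taylor-implies-Scarf}. For $(3)\Rightarrow(2)$ I would treat the two alternatives separately. If $G$ has a perfect matching of size $n$, then $G$ has exactly $2n$ vertices and \emph{every} size-$n$ matching is perfect, so every minimal generator of $I(G)^{[n]}$ equals $x_1\cdots x_{2n}$; the ideal is principal and hence trivially Taylor. In the leaf case I would first observe that any size-$n$ matching must use exactly one of the leaves $x_{2n},y_1,\dots,y_\ell$ (the $2n-1$ non-leaf vertices $x_1,\dots,x_{2n-1}$ cannot support a matching of size $n$, and all leaves compete for the single joint $x_{2n-1}$), together with a perfect matching of $G[\{x_1,\dots,x_{2n-2}\}]$. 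Since the vertex product of any such perfect matching is $x_1\cdots x_{2n-2}$, this forces
\[
I(G)^{[n]} = (x_1\cdots x_{2n-1})\,(x_{2n},y_1,\dots,y_\ell),
\]
a monomial multiple of an ideal generated by distinct variables. The latter has a minimal Taylor resolution (its $\lcm$-labels are distinct squarefree monomials), and multiplying through by $x_1\cdots x_{2n-1}$ preserves distinctness of labels, so $I(G)^{[n]}$ is Taylor.

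The substance lies in $(1)\Rightarrow(3)$. Assuming $I(G)^{[n]}$ is Scarf, Lemma~\ref{lem:matching-number-Erey} gives $m(G)=n$; I would fix a maximum matching $M=\{x_1x_2,\dots,x_{2n-1}x_{2n}\}$ and let $Y$ be the set of unmatched vertices. If $Y=\varnothing$ then $M$ is perfect and the first alternative holds, so assume $Y\ne\varnothing$. The first step is to show that every $y\in Y$ is a leaf of $G$. I would restrict to $H=G[\{x_1,\dots,x_{2n}\}\cup\{y\}]$, a graph on $2n+1$ vertices with $m(H)=n$ and no isolated vertices, so $I(H)^{[n]}$ is Scarf by Lemma~\ref{lem:HHZ-Scarf-squarefreepower} and Lemma~\ref{lem:2n+1-vertices} applies, producing two leaves $u,w$ of $H$ with a common joint $c$. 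Maximality of $M$ forces $N_G(y)$ to consist of matched vertices, and $u,w$ cannot both be matched: a matched leaf of $H$ is adjacent to its $M$-partner, which would then have to equal $c$, forcing $u=w$. Hence one of $u,w$ is $y$, so $\deg_G(y)=1$ and $y$ is a leaf. The same analysis identifies the joint $c(y)$ of $y$ as a matched vertex whose $M$-partner $w(y)$ is the other leaf of $H$.

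The main obstacle is proving that all the joints $c(y)$ coincide; this is exactly where the forbidden configuration of Lemma~\ref{lem:forbidden-squarefree} enters. Suppose $y,y'\in Y$ have distinct joints $c\ne c'$. If $c,c'$ were $M$-matched to each other, then replacing the edge $cc'$ by $cy$ and $c'y'$ would give a matching of size $n+1$, contradicting $m(G)=n$. Otherwise $c,c'$ lie in distinct edges $cw,\,c'w'$ of $M$, and I would pass to $\tilde H=G[\{x_1,\dots,x_{2n}\}\cup\{y,y'\}]$, a graph on $2n+2$ vertices. Using that $w=w(y)$ and $w'=w(y')$ have no neighbours outside $\{c\}\cup Y$ and $\{c'\}\cup Y$ respectively, and that every vertex of $Y$ is a leaf attached to a \emph{single} joint, one checks that in $\tilde H$ the vertex $c$ carries the two leaves $y,w$ and $c'$ carries the two leaves $y',w'$, with the remaining matched vertices forming the core matching. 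This is precisely the forbidden structure of Lemma~\ref{lem:forbidden-squarefree}, so $I(\tilde H)^{[n]}$ is not Scarf, contradicting Scarfness of $I(G)^{[n]}$ by restriction. Hence all joints equal a single vertex $c$. Finally, the $M$-partner $w$ of $c$ is a leaf of $G$, since its only possible additional neighbours lie in $Y$, and every such vertex is a leaf attached to $c\ne w$. Relabelling $c=x_{2n-1}$, $w=x_{2n}$, and $Y=\{y_1,\dots,y_\ell\}$ yields the second alternative, completing the argument.
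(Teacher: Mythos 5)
Your proof is correct and takes essentially the same route as the paper: the same implication cycle $(2)\Rightarrow(1)\Rightarrow(3)\Rightarrow(2)$, the same explicit description $I(G)^{[n]}=(x_1\cdots x_{2n-1})(x_{2n},y_1,\dots,y_\ell)$ for $(3)\Rightarrow(2)$, and for $(1)\Rightarrow(3)$ the same three ingredients: Lemma~\ref{lem:matching-number-Erey} to get $m(G)=n$, restriction to $2n+1$ vertices via Lemmas~\ref{lem:HHZ-Scarf-squarefreepower} and \ref{lem:2n+1-vertices} to show unmatched vertices are leaves, and the forbidden configuration of Lemma~\ref{lem:forbidden-squarefree} to force a common joint. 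The only difference is organizational (you first prove every unmatched vertex is a leaf and then unify the joints, whereas the paper fixes one unmatched vertex $y_1$ and then directly excludes other neighbours for each remaining $y_i$), which does not change the substance.
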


\begin{proof}
First, we note that (2) implies (1) by Lemma~\ref{lem:Taylor-implies-Scarf}. If (3) holds, then 
\[I(G)^{[n]}=\left(\prod_{i=1}^{2n-1}x_i\right)(y_1,\dots ,y_\ell,x_{2n}) \]
which implies that every minimal monomial generator of $I(G)^{[n]}$ has a variable which does not divide any other minimal monomial generator. Therefore (3)$\implies$(2). 

    Now, we show that (1)$\implies$(3). Assume that $I(G)^{[n]}$ is Scarf. Then $m(G)=n$ by Lemma~\ref{lem:matching-number-Erey}. Let $\{x_1x_2, x_3x_4,\dots , x_{2n-1}x_{2n}\}$ be a matching of $G$. Let $e_i=x_{2i-1}x_{2i}$. If $G$ has exactly $2n$ vertices, then the result follows. We can then assume that $G$ has at least $2n+1$ vertices, i.e., 
    \[
    V(G)=\{x_1,\dots, x_{2n}, y_1,\dots ,y_\ell\}
    \]
    for some $\ell\geq 1$. If $\ell=1$, then the result follows from Lemmas~\ref{lem:Scarf-2-gens} and \ref{lem:2n+1-vertices}. Hence we assume that $\ell\geq 2$. Since $m(G)=n$, the set of vertices $Y:=\{ y_1,\dots, y_\ell \}$ is independent in $G$. On the other hand, since $G$ has no isolated vertices, every vertex in $Y$ is adjacent to an endpoint of $e_i$ for some $1\leq i\leq n$. Without loss of generality, suppose $y_1$ is adjacent to $x_{2n-1}$. Considering the induced subgraph on $V(G)\setminus \{y_2,\dots ,y_\ell\}$ and applying Lemmas~\ref{lem:HHZ-Scarf-squarefreepower} and \ref{lem:2n+1-vertices}, it follows that $y_1$ and $x_{2n}$ are leaves of $G$.

    Let $2\leq i\leq \ell$. It remains to show that $y_i$ is a leaf of $G$ with joint $x_{2n-1}$. Indeed, $y_ix_{2n}$ is not an edge of $G$ because $m(G)=n$. Moreover, considering the induced subgraph on $\{x_1,\dots,x_{2n}, y_1, y_i\}$ and applying Lemmas~\ref{lem:HHZ-Scarf-squarefreepower} and \ref{lem:forbidden-squarefree}, it follows that $y_i$ is not adjacent to $x_j$ for all $1\leq j\leq 2n-2$, which completes the proof.
\end{proof}

\section{Symbolic powers of edge ideals}\label{sec:symbolic powers}
Let $G$ be a graph with edge ideal $I(G)$ in $S=\Bbbk[x_1,\dots ,x_n]$. For any subset $A$ of $\{x_1,\dots ,x_n\}$, let $\mathfrak{p}_A$ denote the prime ideal generated by the variables in $A$. Let $\mathcal{C}(G)$ denote the set of minimal vertex covers of $G$. It is well-known that $I(G)$ has a unique irredundant primary decomposition (see, e.g., \cite[Corollary 1.3.6]{HH11}) 
\[
I(G)= \bigcap_{A\in \mathcal{C}(G)} \mathfrak{p}_A.
\]
The \emph{$k$-th symbolic power} of $I(G)$, denoted by $I(G)^{(k)}$, is defined to be
\[
I(G)^{(k)} = \bigcap_{A\in \mathcal{C}(G)} \mathfrak{p}_A^k.
\]
It was conjectured by Minh \cite{MV2021} that $\reg(I(G)^n)=\reg(I(G)^{(n)})$ for any graph $G$. Based on this conjecture, one might expect some similarities in cellular resolutions of ordinary and symbolic powers of edge ideals. In fact, we will see that ordinary and symbolic powers of edge ideals are Scarf under very similar conditions. 
We record the following well-known result.

\begin{lemma}[{\cite{SVV94}}]\label{lem:bipartite-symbolic}
    A graph $G$ is bipartite if and only if $I(G)^{(k)}=I(G)^k$ for any $k\geq 1$.
\end{lemma}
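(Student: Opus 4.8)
The plan is to exploit the primary-decomposition descriptions of both sides and reduce the claimed equality to a min-max (König-type) theorem from combinatorial optimization. Since $I(G)\subseteq\mathfrak p_A$ for every minimal vertex cover $A$, we always have $I(G)^k\subseteq\mathfrak p_A^k$ and hence $I(G)^k\subseteq I(G)^{(k)}$; so the content is the reverse containment when $G$ is bipartite, and its failure otherwise. Both memberships have clean monomial criteria. Writing $x^{\mathbf a}$ for the monomial with exponent vector $\mathbf a\in\mathbb N^N$, one has $x^{\mathbf a}\in\mathfrak p_A^k$ iff $\sum_{i\in A}a_i\ge k$, so $x^{\mathbf a}\in I(G)^{(k)}$ iff $\sum_{i\in A}a_i\ge k$ for every minimal vertex cover $A$. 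Because each $a_i\ge 0$, the minimum of $\sum_{i\in A}a_i$ over all vertex covers is attained on an inclusion-minimal one, so this condition is equivalent to $\tau(\mathbf a):=\min\{\sum_{i\in A}a_i : A\text{ a vertex cover of }G\}\ge k$. On the other hand, $x^{\mathbf a}\in I(G)^k$ iff some product of $k$ edges divides $x^{\mathbf a}$, i.e. there exist edges $f_1,\dots,f_k$ (with repetition) whose characteristic vectors satisfy $\sum_{j}\chi_{f_j}\le\mathbf a$ coordinatewise; equivalently, the maximum integral $b$-matching $\nu(\mathbf a)$ of $G$ with vertex capacities $\mathbf a$ satisfies $\nu(\mathbf a)\ge k$.

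For the forward direction I would invoke linear-programming duality. The quantities $\nu(\mathbf a)$ and $\tau(\mathbf a)$ are the optimal values of a dual pair of integer programs whose LP relaxations are, respectively, the fractional $b$-matching $\max\{\mathbf 1^{\mathsf T}z : Mz\le\mathbf a,\ z\ge\mathbf 0\}$ and the fractional weighted vertex cover $\min\{\mathbf a^{\mathsf T}y : M^{\mathsf T}y\ge\mathbf 1,\ y\ge\mathbf 0\}$, where $M$ is the vertex-edge incidence matrix; weak duality always gives $\nu(\mathbf a)\le\tau(\mathbf a)$. When $G$ is bipartite, $M$ is totally unimodular, so for integral $\mathbf a$ both relaxations attain integral optima, and LP duality upgrades the inequality to $\nu(\mathbf a)=\tau(\mathbf a)$. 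Consequently, for every $k$,
\[
x^{\mathbf a}\in I(G)^{(k)}\iff \tau(\mathbf a)\ge k\iff \nu(\mathbf a)\ge k\iff x^{\mathbf a}\in I(G)^k,
\]
which proves $I(G)^{(k)}=I(G)^k$ in one stroke.

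For the converse I would exhibit an explicit witness coming from an odd cycle. Assume $G$ is not bipartite and fix a cycle $C$ of odd length $2r+1$ in $G$, with vertex set $V(C)$; take $\mathbf a$ to be the indicator vector of $V(C)$, so that $x^{\mathbf a}=\prod_{v\in V(C)}v$ has degree $2r+1$. Since every minimal generator of $I(G)^{r+1}$ has degree $2(r+1)=2r+2>2r+1$, we get $x^{\mathbf a}\notin I(G)^{r+1}$. Conversely, any vertex cover $A$ of $G$ covers the edges of $C$, so $A\cap V(C)$ is a vertex cover of $C_{2r+1}$, which forces $\sum_{i\in A}a_i=|A\cap V(C)|\ge r+1$ because the minimum vertex cover of $C_{2r+1}$ has size $r+1$. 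Hence $x^{\mathbf a}\in I(G)^{(r+1)}$, and therefore $I(G)^{(r+1)}\neq I(G)^{r+1}$.

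The technical heart is the weighted König equality $\nu(\mathbf a)=\tau(\mathbf a)$ for bipartite $G$; everything else is bookkeeping with the two monomial criteria. I expect the cleanest route to that equality to be total unimodularity combined with LP duality as above, though one could instead argue by an edge-peeling induction, showing that any $x^{\mathbf a}\in I(G)^{(k)}$ admits an edge $\{u,v\}$ with $a_u,a_v\ge 1$ and $x^{\mathbf a}/(x_ux_v)\in I(G)^{(k-1)}$, where bipartiteness is exactly what guarantees the existence of such a removable edge. The odd-cycle computation then pinpoints precisely why both the equality and the induction break down in the non-bipartite case.
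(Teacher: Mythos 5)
Your proposal is correct, but there is nothing in the paper to compare it against line-by-line: the paper does not prove this lemma at all, it simply records it as a known theorem with a citation to Simis--Vasconcelos--Villarreal \cite{SVV94}. What you have written is a self-contained proof of that cited result, and it is the standard combinatorial-optimization argument. Your two membership criteria are right: $x^{\mathbf a}\in I(G)^{(k)}$ iff the minimum $\mathbf a$-weight of a vertex cover satisfies $\tau(\mathbf a)\ge k$ (the reduction from minimal vertex covers, which is how the paper defines the symbolic power, to all vertex covers is justified by $\mathbf a\ge\mathbf 0$), and $x^{\mathbf a}\in I(G)^k$ iff the maximum $b$-matching satisfies $\nu(\mathbf a)\ge k$. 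The technical heart, the weighted K\"onig equality $\nu(\mathbf a)=\tau(\mathbf a)$ for bipartite $G$, does follow from total unimodularity of the incidence matrix plus LP duality, and the odd-cycle witness for the converse is airtight: $\prod_{v\in V(C)}v$ has degree $2r+1<2(r+1)$, so it misses $I(G)^{r+1}$, while the vertex cover number $r+1$ of $C_{2r+1}$ puts it in $I(G)^{(r+1)}$. Compared with the route in \cite{SVV94}, which is ideal-theoretic (bipartiteness is shown to be equivalent to $I(G)$ being normally torsion-free, via Rees algebra and associated-prime considerations), your argument is more elementary and makes the combinatorial content --- the max-flow--min-cut/Mengerian property of bipartite graphs --- completely explicit; what the ideal-theoretic approach buys in exchange is extra structural information (normality of the Rees algebra, absence of embedded primes of all powers) that the direct LP argument does not give. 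One small point to tighten: an integral optimal solution $y$ of the dual LP may a priori have entries $\ge 2$, so to identify the integral dual optimum with $\tau(\mathbf a)$ you should note that replacing each $y_i$ by $\min(y_i,1)$ preserves feasibility of the constraints $y_u+y_v\ge 1$ and does not increase $\mathbf a^{\mathsf T}y$; this is a one-line fix, not a gap.
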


We show that symbolic powers interact well with the restriction lemma.

\begin{lemma}\label{lem:MVC-induced}
    Let $G$ be a finite simple graph and let $H$ be an induced subgraph. Then any minimal vertex cover of $G$ contains some minimal vertex cover of $H$, and any minimal vertex cover of $H$ is contained in some minimal vertex cover of $G$.
\end{lemma}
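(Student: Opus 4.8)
The plan is to prove the two assertions separately and by direct combinatorial arguments, since each is really a statement about how vertex covers restrict to and extend from an induced subgraph. Throughout, recall that a subset $W \subseteq V(G)$ is a vertex cover of $G$ if and only if its complement $V(G) \setminus W$ is an independent set of $G$; this duality is what I would exploit to translate cover statements into independence statements, which behave more transparently under taking induced subgraphs.

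For the first assertion, I would start with a minimal vertex cover $W$ of $G$ and consider $W \cap V(H)$. First I would check that $W \cap V(H)$ is a vertex cover of $H$: any edge $\{x,y\} \in E(H)$ is, since $H$ is induced, also an edge of $G$, so $W$ meets $\{x,y\}$, and since $x,y \in V(H)$ the intersecting vertex lies in $W \cap V(H)$. Thus $W \cap V(H)$ is a (not necessarily minimal) vertex cover of $H$, and any minimal vertex cover of $H$ contained in $W \cap V(H)$ — which exists because every vertex cover contains a minimal one — is a minimal vertex cover of $H$ contained in $W$. This gives the first claim.

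For the second assertion, I would take a minimal vertex cover $W'$ of $H$ and try to extend it to a minimal vertex cover of $G$. The natural candidate is $W' \cup \big(V(G) \setminus V(H)\big)$, which certainly covers every edge of $G$: an edge with both endpoints in $V(H)$ is covered by $W'$, while any edge with an endpoint outside $V(H)$ is covered by that endpoint. This set is a vertex cover of $G$ containing $W'$, so it contains some minimal vertex cover $W$ of $G$; the point to verify is that this minimal $W$ still contains $W'$, i.e.\ that the minimization does not delete vertices of $W'$. The cleaner route, which I expect to be the main obstacle, is to argue that $W'$ is \emph{itself} extendable without shrinkage: equivalently, working with complements, $V(H) \setminus W'$ is a \emph{maximal} independent set of $H$, and I would show it can be enlarged to a maximal independent set $U$ of $G$ with $U \cap V(H) = V(H) \setminus W'$ exactly. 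The complement $V(G) \setminus U$ is then a minimal vertex cover of $G$ whose trace on $V(H)$ is precisely $W'$, giving containment.

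The delicate point in this last step is ensuring $U \cap V(H) = V(H)\setminus W'$ \emph{with equality}, not merely containment, when we greedily enlarge the independent set $V(H)\setminus W'$ to a maximal one in $G$: a priori the enlargement could add a vertex of $V(H)$ that is non-adjacent to everything currently chosen. I would rule this out using maximality of $V(H)\setminus W'$ in $H$ — any $v \in V(H)$ outside $V(H)\setminus W'$ has a neighbour in $V(H)\setminus W'$, and since $H$ is induced this neighbour is still adjacent in $G$, so $v$ can never be added. Hence the greedy enlargement only adds vertices outside $V(H)$, which is exactly what is needed, and the complement argument then delivers a minimal vertex cover of $G$ containing $W'$.
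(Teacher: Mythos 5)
Your proof is correct, but for the second assertion you take a different route than the paper. The paper completes exactly the argument you sketch and then set aside: it extends $W'$ to $W'\cup\bigl(V(G)\setminus V(H)\bigr)$ and shows directly that no vertex of $W'$ can be discarded when passing to a minimal vertex cover of $G$, because vertices of $V(G)\setminus V(H)$ lie on no edge of $H$, while minimality of $W'$ in $H$ makes each $w\in W'$ indispensable for covering some edge of $H$. You instead dualize: $V(H)\setminus W'$ is a maximal independent set of $H$, you extend it to a maximal independent set $U$ of $G$, use maximality of $V(H)\setminus W'$ in $H$ to show $U$ cannot meet $W'$, and take the complement $V(G)\setminus U$, which is automatically a minimal vertex cover of $G$ containing $W'$. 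The trade-off is clear: your dual route gets minimality of the resulting cover for free (complements of maximal independent sets are minimal vertex covers), at the cost of invoking the duality dictionary and controlling the extension; the paper's route is shorter and stays entirely in the language of covers, at the cost of having to argue explicitly that minimization preserves $W'$. Both arguments ultimately rest on the same two facts: inducedness of $H$ and the minimality (equivalently, maximality of the complement) of $W'$ in $H$. One small placement issue in your write-up: you invoke inducedness to say that a neighbour in $V(H)\setminus W'$ "is still adjacent in $G$," but that only needs $H$ to be a subgraph; where inducedness is genuinely required in your argument is earlier, to guarantee that $V(H)\setminus W'$, being independent in $H$, is still independent in $G$, so that it can be enlarged to an independent set of $G$ at all. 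Since both facts are true, this does not affect correctness, but the justification belongs at that earlier step.
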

\begin{proof}
    Any minimal vertex cover of $G$ clearly contains a vertex cover of $H$, and thus by definition contains a minimal vertex cover of $H$. This settles the first claim.

    For the second claim, consider a minimal vertex cover $W$ of $H$. Then $W\cup (V(G)\setminus V(H))$ is a vertex cover of $G$, and thus contains a minimal vertex cover of $G$. Since vertices in $V(G)\setminus V(H)$ do not involve any edge in $H$, no vertex of $W$ can be removed from $W\cup (V(G)\setminus V(H))$ in the process of obtaining a minimal vertex cover of $G$. Thus the second claim follows, as desired.
\end{proof}
The above lemma can be rephrased in terms of the minimal primes as follows.
\begin{lemma}\label{lem:MVC ideal version}
    Let $G$ be a finite simple graph and let $H$ be an induced subgraph. Let $m$ be the product of all vertices of $H$. If $\mathfrak{p}$ is a minimal prime of $I(H)$, then there is a minimal prime $\mathfrak{q}$ of $I(G)$ such that $\mathfrak{p}=\mathfrak{q}^{\leq m}$. On the other hand, if $\mathfrak{q}$ is a minimal prime of $I(G)$, then there is a minimal prime $\mathfrak{p}$ of $I(H)$ such that $\mathfrak{p}\subseteq \mathfrak{q}^{\leq m}$.
\end{lemma}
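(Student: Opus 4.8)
The plan is to translate the combinatorial Lemma~\ref{lem:MVC-induced} into the language of minimal primes, using two ingredients: the primary decomposition $I(G) = \bigcap_{A\in\mathcal{C}(G)} \mathfrak{p}_A$ stated earlier (so that the minimal primes of $I(G)$ are exactly the $\mathfrak{p}_A$ with $A$ a minimal vertex cover of $G$, and likewise for $H$), and an explicit computation of the restriction operation on these primes. The central observation is that for a minimal vertex cover $A$ of $G$, the ideal $\mathfrak{p}_A$ is minimally generated by the variables $\{x : x\in A\}$, and such a variable divides $m = \prod_{v\in V(H)} v$ precisely when $x\in V(H)$. Hence
\[
\mathfrak{p}_A^{\leq m} = \mathfrak{p}_{A\cap V(H)}.
\]
Once this identity is in hand, both claims reduce to tracking intersections with $V(H)$.

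For the second claim, I would start with a minimal prime $\mathfrak{q} = \mathfrak{p}_A$ of $I(G)$, where $A$ is a minimal vertex cover of $G$. By the first half of Lemma~\ref{lem:MVC-induced}, $A$ contains some minimal vertex cover $B$ of $H$. Since $B\subseteq V(H)$ and $B\subseteq A$, we get $B\subseteq A\cap V(H)$, so $\mathfrak{p}_B \subseteq \mathfrak{p}_{A\cap V(H)} = \mathfrak{q}^{\leq m}$; taking $\mathfrak{p} = \mathfrak{p}_B$ finishes this direction. This is the routine half, needing only a containment rather than an equality.

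For the first claim, I would take a minimal prime $\mathfrak{p} = \mathfrak{p}_B$ of $I(H)$ and produce a minimal prime $\mathfrak{q}$ of $I(G)$ with $\mathfrak{q}^{\leq m} = \mathfrak{p}$, i.e.\ with $A\cap V(H) = B$ exactly. The main obstacle is that the bare statement of Lemma~\ref{lem:MVC-induced} only gives a minimal vertex cover $A\supseteq B$, and such an $A$ could a priori contain extra vertices of $V(H)$, which would make $A\cap V(H)$ strictly larger than $B$. To avoid this I would invoke the explicit construction in the proof of Lemma~\ref{lem:MVC-induced}: choose $A$ to be a minimal vertex cover of $G$ obtained from $B\cup(V(G)\setminus V(H))$. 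Then $A\subseteq B\cup(V(G)\setminus V(H))$ forces $A\cap V(H)\subseteq B$, while the argument already recorded there (that no vertex of $B$ is discarded, since the vertices of $V(G)\setminus V(H)$ cover no edge of $H$) gives $B\subseteq A$, hence $B\subseteq A\cap V(H)$. Combining the two inclusions yields $A\cap V(H) = B$, so $\mathfrak{q} = \mathfrak{p}_A$ satisfies $\mathfrak{q}^{\leq m} = \mathfrak{p}_{A\cap V(H)} = \mathfrak{p}_B = \mathfrak{p}$, as desired.

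Thus the only genuinely delicate point is ensuring the chosen cover $A$ of $G$ meets $V(H)$ in exactly $B$ rather than in a larger set; everything else is a direct unwinding of the restriction operation on monomial primes together with the correspondence between minimal primes and minimal vertex covers.
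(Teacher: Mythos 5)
Your proof is correct and takes essentially the same route as the paper: the paper states this lemma without proof, presenting it as a direct rephrasing of Lemma~\ref{lem:MVC-induced}, and your argument is exactly that rephrasing carried out in detail via the identity $\mathfrak{p}_A^{\leq m} = \mathfrak{p}_{A\cap V(H)}$. Your observation that the equality $\mathfrak{p}=\mathfrak{q}^{\leq m}$ needs the explicit construction $A\subseteq B\cup(V(G)\setminus V(H))$ from the \emph{proof} of Lemma~\ref{lem:MVC-induced}, not merely its statement, is precisely the point the paper leaves implicit, and you resolve it correctly.
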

\begin{lemma}\label{lem:HHZ-intersection}
    Let $Q_1,\dots, Q_t$ be monomial ideals where $t$ is an integer. Set $I=\bigcap_{i=1}^t Q_i$. Then for any monomial $m$, we have
    \[
    I^{\leq m} = \bigcap_{i=1}^t Q_i^{\leq m}. 
    \]
\end{lemma}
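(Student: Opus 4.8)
The plan is to reduce everything to a clean membership criterion for HHZ-subideals, after which the intersection formula becomes a routine componentwise check. Since both $I^{\leq m}$ and $\bigcap_{i=1}^t Q_i^{\leq m}$ are monomial ideals, it suffices to show that they contain the same monomials.

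First I would establish the following characterization: for any monomial ideal $J$ and any monomials $w$ and $m$, one has $w \in J^{\leq m}$ if and only if $\gcd(w,m) \in J$. The forward direction is immediate, since if $w \in J^{\leq m}$ then some $u \in \mingens(J)$ with $u \mid m$ divides $w$, whence $u \mid \gcd(w,m)$ and so $\gcd(w,m) \in J$. For the converse, if $\gcd(w,m) \in J$ then, $J$ being a monomial ideal, some $u \in \mingens(J)$ divides $\gcd(w,m)$; this $u$ divides $m$, so it is a generator of $J^{\leq m}$, and $u \mid \gcd(w,m) \mid w$ forces $w \in J^{\leq m}$.

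With this criterion in hand, the lemma follows by chasing equivalences. For a monomial $w$, membership $w \in I^{\leq m}$ is equivalent to $\gcd(w,m) \in I = \bigcap_{i=1}^t Q_i$, which holds if and only if $\gcd(w,m) \in Q_i$ for every $i$, i.e. if and only if $w \in Q_i^{\leq m}$ for every $i$; and this last condition is exactly $w \in \bigcap_{i=1}^t Q_i^{\leq m}$. Having the same monomials, the two ideals coincide.

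The inclusion $I^{\leq m} \subseteq \bigcap_i Q_i^{\leq m}$ is the easy half and can be seen even without the criterion: a generator $u$ of $I$ dividing $m$ lies in each $Q_i$, hence (after dividing by a minimal generator of $Q_i$) in each $Q_i^{\leq m}$. The genuine content is the reverse inclusion, and I expect it to be the only step requiring care: a monomial lying in every $Q_i^{\leq m}$ need not be divisible by a single common minimal generator, so one cannot argue generator-by-generator. This is precisely the obstacle the $\gcd$-criterion removes, replacing the unstable condition ``divisible by a generator that divides $m$'' with the membership-stable condition ``$\gcd(w,m) \in J$'', which passes through intersections transparently.
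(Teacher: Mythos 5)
Your proof is correct, but it takes a genuinely different route from the paper's. The paper argues by induction on $t$, reducing to the case $t=2$, and there invokes the standard description of intersections of monomial ideals (citing \cite[Proposition~1.2.1]{HH11}): every minimal generator $f$ of $Q_1^{\leq m}\cap Q_2^{\leq m}$ can be written as $f=\lcm(a,b)$ with $a\in Q_1^{\leq m}$ and $b\in Q_2^{\leq m}$, whence $f\in Q_1\cap Q_2=I$ and $f\mid m$, so $f\in I^{\leq m}$. You instead isolate the membership criterion $w\in J^{\leq m}\iff \gcd(w,m)\in J$ for monomials $w$, which you verify correctly (both directions use only the fact that a monomial lies in a monomial ideal exactly when some minimal generator divides it), and then the lemma is a chain of equivalences, since the condition $\gcd(w,m)\in\bigcap_i Q_i$ visibly splits componentwise. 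What your route buys: it needs neither the induction nor the $\lcm$-description of $\mingens(Q_1\cap Q_2)$, it treats any number of factors (indeed any family of monomial ideals) uniformly, and the criterion itself is a reusable tool --- for instance it also yields Lemma~\ref{lem:squarefree restriction with same support} at a glance. What the paper's route buys is brevity, since its key step is outsourced to a cited textbook fact. One small stylistic point: in your ``easy half'' aside, the parenthetical ``after dividing by a minimal generator of $Q_i$'' should rather say ``after passing to a minimal generator of $Q_i$ dividing $u$''; in any case that half is subsumed by your criterion.
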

\begin{proof}
    By induction, it suffices to prove the result for $t=2$. It is clear that $I^{\leq m}\subseteq Q_1^{\leq m} \cap Q_2^{\leq m}$. Conversely, consider a minimal monomial generator $f$ of $Q_1^{\leq m} \cap Q_2^{\leq m}$. It is well-known (see, e.g., \cite[Proposition 1.2.1]{HH11}) that we can write $f=\lcm(a,b)$ where $a\in Q_1^{\leq m} $ and $b\in Q_2^{\leq m}$. Thus $f\in Q_1\cap Q_2=I$ and $f\mid m$. In other words, $f\in I^{\leq m}$, as desired.
\end{proof}

\begin{lemma}\label{lem:squarefree restriction with same support}
    Let $I$ be a squarefree monomial ideal and let $u$ be a squarefree monomial. Then
    \[I^{\leq u}=I^{\leq w}\]
    for any monomial $w$ with $\supp(w)=\supp(u)$.
\end{lemma}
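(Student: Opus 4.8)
The plan is to reduce the claimed equality of ideals to an equality of generating sets, and then to exploit the fact that divisibility by a \emph{squarefree} monomial is detected purely at the level of supports. By definition, $I^{\leq u}$ is generated by those elements of $\mingens(I)$ that divide $u$, and likewise $I^{\leq w}$ is generated by those elements of $\mingens(I)$ that divide $w$. Since both ideals are generated by subsets of $\mingens(I)$, it suffices to prove that a minimal generator $v$ of $I$ divides $u$ if and only if it divides $w$.

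The one observation I would isolate is this: because $I$ is squarefree, every $v \in \mingens(I)$ is a squarefree monomial, and for any squarefree monomial $v$ and \emph{any} monomial $m$ one has $v \mid m$ if and only if $\supp(v) \subseteq \supp(m)$. Indeed, $v \mid m$ means each variable of $v$ occurs in $m$ to at least the power to which it occurs in $v$; as each such power is $1$, this is exactly the containment of supports. I would flag that this is precisely the step where the squarefreeness of the \emph{generators} is used, rather than any squarefreeness of $w$ (which need not be squarefree).

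With this in hand the proof closes immediately: for $v \in \mingens(I)$ we have $v \mid u \iff \supp(v) \subseteq \supp(u) = \supp(w) \iff v \mid w$, where the middle equality is the hypothesis $\supp(u) = \supp(w)$. Hence the two subsets of $\mingens(I)$ selected by divisibility coincide, and therefore $I^{\leq u} = I^{\leq w}$. I do not anticipate any genuine obstacle here; the only point worth attention is the asymmetry between $u$ (squarefree) and $w$ (arbitrary, same support), which is dissolved entirely by the remark that divisibility by a squarefree monomial depends only on its support.
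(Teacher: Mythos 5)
Your proof is correct and is essentially the paper's own argument made explicit: the paper's one-line proof ($I^{\leq v}=I^{\leq \supp(v)}$ for any monomial $v$, since $I$ is squarefree) rests on exactly the observation you isolate, namely that a squarefree generator divides a monomial if and only if its support is contained in the support of that monomial. No difference in substance, only in how much detail is written out.
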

\begin{proof}
    As $I$ is squarefree, $I^{\leq v}=I^{\leq \supp(v)}$ for any monomial $v$ by definition of the restriction.
\end{proof}
\begin{lemma}\label{lem:HHZ-powers} Let $I$ be a squarefree monomial ideal and let $m$ be a squarefree monomial. Then, for any positive integer $n$, 
\[(I^n)^{\leq m^n}=(I^{\leq m})^n.\]
\end{lemma}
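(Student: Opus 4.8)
The plan is to establish the identity by chaining together the restriction results already proved, reducing everything to a statement about supports. The key observation, which drives the whole argument, is that since $m$ is squarefree a monomial $f$ divides $m^n$ if and only if $\supp(f)\subseteq \supp(m)$; in particular the restriction operation at $m^n$ only sees the support of $m$.

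First I would rewrite the left-hand side using Proposition~\ref{prop:restriction and power switch}(2), applied with the monomial $m^n$ in place of $m$, to get
\[
(I^n)^{\leq m^n} = \bigl[(I^{\leq m^n})^n\bigr]^{\leq m^n}.
\]
Since $I$ is squarefree and $\supp(m^n)=\supp(m)$, Lemma~\ref{lem:squarefree restriction with same support} yields $I^{\leq m^n}=I^{\leq m}$, so the right-hand side becomes $\bigl[(I^{\leq m})^n\bigr]^{\leq m^n}$. It then remains to show that the outer restriction at $m^n$ is redundant, i.e.\ $\bigl[(I^{\leq m})^n\bigr]^{\leq m^n}=(I^{\leq m})^n$. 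For this I would invoke Proposition~\ref{prop:restriction and power switch}(1): every minimal generator of $I^{\leq m}$ divides $m$, hence every minimal generator of $(I^{\leq m})^n$, being a product of $n$ such generators, divides $m^n$. Consequently the least common multiple of the minimal generators of $(I^{\leq m})^n$ divides $m^n$, and Proposition~\ref{prop:restriction and power switch}(1) applies to remove the restriction. Combining the three steps gives the claimed equality.

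The step I expect to require the most care is the compatibility of restriction with taking powers, which is exactly where both squarefreeness hypotheses are used. If one prefers a direct argument for the harder inclusion $(I^n)^{\leq m^n}\subseteq (I^{\leq m})^n$, one would take a minimal generator $h$ of $I^n$ dividing $m^n$, write $h=v_1\cdots v_n$ with each $v_j\in\mingens(I)$, and observe that $h\mid m^n$ forces $\supp(h)\subseteq\supp(m)$ and hence $\supp(v_j)\subseteq\supp(m)$ for each $j$; since each $v_j$ is squarefree this gives $v_j\mid m$, so $v_j\in\mingens(I^{\leq m})$ and therefore $h\in(I^{\leq m})^n$. The reverse inclusion is immediate, since any product of generators of $I^{\leq m}$ already lies in $I^n$ and divides $m^n$. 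The essential content in either route is that squarefreeness of $m$ converts the divisibility constraint $h\mid m^n$ into a support constraint, and squarefreeness of $I$ lets that support constraint be read off factor by factor.
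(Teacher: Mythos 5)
Your proof is correct and follows essentially the same route as the paper: apply Proposition~\ref{prop:restriction and power switch}(2) at $m^n$, use Lemma~\ref{lem:squarefree restriction with same support} to replace $I^{\leq m^n}$ by $I^{\leq m}$, and then remove the outer restriction via Proposition~\ref{prop:restriction and power switch}(1) since the lcm of the generators of $(I^{\leq m})^n$ divides $m^n$. The direct support-based argument you sketch at the end is a fine alternative but is not needed.
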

\begin{proof}
    Let $u=m^n$. Since $\supp(m)=\supp(u)$, we can apply respectively Proposition~\ref{prop:restriction and power switch}(2) and Lemma~\ref{lem:squarefree restriction with same support} to get 
    \[(I^n)^{\leq u}=\Big[\big(I^{\leq u}\big)^n \Big]^{\leq u}=\Big[\big(I^{\leq m}\big)^n\Big]^{\leq u}.\]
    On the other hand, since the least common multiple of the minimal monomial generators of $(I^{\leq m})^n$ divides $u$, the result follows from Proposition~\ref{prop:restriction and power switch}(1). 
\end{proof}

Gu et al. showed in \cite[Lemma~4.1]{GHOS20} that when $H$ is an induced subgraph of $G$, the symbolic power $I(H)^{(n)}$ is a subideal of $I(G)^{(n)}$. We show that $I(H)^{(n)}$ is in fact an HHZ-subideal of $I(G)^{(n)}$.  The restriction property will then serve as our main tool to show when $I(G)^{(n)}$ is not Scarf.

\begin{theorem}[{Restriction Lemma for the Scarf complexes of symbolic powers}]\label{thm:HHZ-Scarf-symbolicpowers} 
    Let $H$ be an induced subgraph of $G$, and let $n$ be a positive integer. Let $m$ be the product of all vertices of $H$. Then 
    \[\big( I(G)^{(n)} \big)^{\leq m^n} = I(H)^{(n)}.\]
    
    In particular, if $I(G)^{(n)}$ Scarf, then so is $I(H)^{(n)}$.
\end{theorem}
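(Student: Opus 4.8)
The plan is to compute the restricted ideal $(I(G)^{(n)})^{\leq m^n}$ directly from the primary decomposition $I(G)^{(n)}=\bigcap_{A\in\mathcal{C}(G)}\mathfrak{p}_A^n$ and match it against $I(H)^{(n)}=\bigcap_{B\in\mathcal{C}(H)}\mathfrak{p}_B^n$. First I would push the restriction inside the intersection via Lemma~\ref{lem:HHZ-intersection}, obtaining $(I(G)^{(n)})^{\leq m^n}=\bigcap_{A\in\mathcal{C}(G)}(\mathfrak{p}_A^n)^{\leq m^n}$.

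Next, since each $\mathfrak{p}_A$ and the monomial $m$ are squarefree, Lemma~\ref{lem:HHZ-powers} lets me commute the power past the restriction: $(\mathfrak{p}_A^n)^{\leq m^n}=(\mathfrak{p}_A^{\leq m})^n$. As $m$ is the product of the vertices of $H$, the minimal generators of $\mathfrak{p}_A$ dividing $m$ are exactly the variables in $A\cap V(H)$, so by definition of the restriction $\mathfrak{p}_A^{\leq m}=\mathfrak{p}_{A\cap V(H)}$. Thus the restricted ideal becomes $\bigcap_{A\in\mathcal{C}(G)}\mathfrak{p}_{A\cap V(H)}^n$, reducing the problem to a purely combinatorial question about which vertex covers of $H$ survive.

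The remaining, and most delicate, step is to show this intersection collapses onto the minimal vertex covers of $H$. The elementary fact I would exploit is that $S\subseteq T$ forces $\mathfrak{p}_S^n\subseteq\mathfrak{p}_T^n$, so that in an intersection a term indexed by a larger set is absorbed by one indexed by a smaller subset. Here the ideal-theoretic packaging in Lemma~\ref{lem:MVC ideal version} is exactly what is needed. Its second half says every $\mathfrak{p}_{A\cap V(H)}$ contains some $\mathfrak{p}_B$ with $B\in\mathcal{C}(H)$, whence $\mathfrak{p}_B^n\subseteq\mathfrak{p}_{A\cap V(H)}^n$ and so $I(H)^{(n)}=\bigcap_{B}\mathfrak{p}_B^n$ sits inside every factor, yielding $I(H)^{(n)}\subseteq(I(G)^{(n)})^{\leq m^n}$. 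Its first half says each $\mathfrak{p}_B$ with $B\in\mathcal{C}(H)$ equals $\mathfrak{p}_{A\cap V(H)}$ for some $A\in\mathcal{C}(G)$, so every factor $\mathfrak{p}_B^n$ already occurs among the $\mathfrak{p}_{A\cap V(H)}^n$, giving the reverse inclusion. Combining the two yields $(I(G)^{(n)})^{\leq m^n}=I(H)^{(n)}$.

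The concluding \emph{in particular} clause is then immediate: applying the Restriction Lemma for Scarf complexes, Lemma~\ref{lem:HHZ-Taylor-Scarf}, to $I(G)^{(n)}$ with respect to the monomial $m^n$ transfers Scarfness to $(I(G)^{(n)})^{\leq m^n}=I(H)^{(n)}$. I expect the main obstacle to lie entirely in the collapsing step, where both directions of Lemma~\ref{lem:MVC ideal version} must be marshalled so that no minimal vertex cover of $H$ is lost and no larger restricted cover spuriously contributes; by contrast the algebraic reductions through Lemmas~\ref{lem:HHZ-intersection} and~\ref{lem:HHZ-powers} are routine once one verifies the squarefreeness hypotheses they require.
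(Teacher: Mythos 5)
Your proposal is correct and follows essentially the same route as the paper's proof: distribute the restriction across the primary decomposition via Lemma~\ref{lem:HHZ-intersection}, commute powers and restriction via Lemma~\ref{lem:HHZ-powers}, identify the resulting intersection with $I(H)^{(n)}$ via Lemma~\ref{lem:MVC ideal version}, and conclude Scarfness via Lemma~\ref{lem:HHZ-Taylor-Scarf}. The only difference is cosmetic: you spell out the absorption argument (that $\mathfrak{p}_A^{\leq m}=\mathfrak{p}_{A\cap V(H)}$ and how both halves of Lemma~\ref{lem:MVC ideal version} give the two inclusions), which the paper leaves implicit in a single sentence.
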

\begin{proof}
    Combining the definition of symbolic powers with Lemma~\ref{lem:HHZ-intersection} and Lemma~\ref{lem:HHZ-powers}  we obtain that
    \[ \left(I(G)^{(n)}\right)^{\leq m^n} 
    = 
    \left(\bigcap_{V\in \mathcal{C}(G)} \mathfrak{p}_V^n\right)^{\leq m^n} 
    = \bigcap_{V\in \mathcal{C}(G)} \left(\mathfrak{p}_V^n\right)^{\leq m^n} 
    = \bigcap_{V\in \mathcal{C}(G)} \left({\mathfrak{p}_V}^{\leq m} \right)^n.\]

    Lemma~\ref{lem:MVC ideal version} implies that
    \[\bigcap_{V\in \mathcal{C}(G)} \big({\mathfrak{p}_V}^{\leq m} \big)^n = \bigcap_{W\in \mathcal{C}(H)} \big({\mathfrak{p}_W} \big)^n =I(H)^{(n)}\]
    as desired. The last statement follows from Lemma~\ref{lem:HHZ-Taylor-Scarf}.
\end{proof}

The graded Betti numbers of $I(G)^{(n)}$ were compared with those of $I(H)^{(n)}$ in \cite{GHOS20} using upper-Koszul simplicial complexes associated to monomial ideals, and Nagata-Zariski’s characterization of symbolic powers of radical ideals over a field of characteristic 0. We recover \cite[Lemma~4.4]{GHOS20} and \cite[Corollary~4.5]{GHOS20} in the next corollary. 
\begin{corollary}Let $H$ be an induced subgraph of $G$ and let $n$ be a positive integer. Then
\begin{enumerate}
    \item $\beta_{i,j}(I(H)^{(n)})\leq \beta_{i,j}(I(G)^{(n)})$,
    \item $\reg I(H)^{(n)} \leq \reg I(G)^{(n)}$.
\end{enumerate}
\end{corollary}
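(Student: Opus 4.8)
The plan is to derive both inequalities directly from the structural identification provided by Theorem~\ref{thm:HHZ-Scarf-symbolicpowers}, namely that $I(H)^{(n)} = \big(I(G)^{(n)}\big)^{\leq m^n}$, where $m = \prod_{x\in V(H)} x$. The point is that $I(H)^{(n)}$ is an HHZ-subideal of $I(G)^{(n)}$, and HHZ-subideals interact with minimal resolutions in a controlled way via the Restriction Lemma (\cite[Lemma~4.4]{HHZ04}) recalled in Section~\ref{sec:preliminaries}. Both parts of the corollary will follow purely formally from this.

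First I would fix a minimal $\mathbb{N}^{N}$-graded free resolution $\mathcal{F}$ of $S/I(G)^{(n)}$, writing $F_i = \bigoplus_j S(-q_{ij})$. Applying the Restriction Lemma with respect to the monomial $m^n$ produces the subcomplex $\mathcal{G}$ with $G_i = \bigoplus_{q_{ij}\leq m^n} S(-q_{ij})$, and the lemma guarantees that $\mathcal{G}$ is a \emph{minimal} $\mathbb{N}^{N}$-graded free resolution of $S/\big(I(G)^{(n)}\big)^{\leq m^n} = S/I(H)^{(n)}$. Because both resolutions are minimal, the multigraded Betti numbers are read off directly as the multiplicities of the shifts $S(-q_{ij})$ appearing in each homological degree; since the summands of $\mathcal{G}$ form a subset of those of $\mathcal{F}$, every multigraded Betti number of $S/I(H)^{(n)}$ is at most the corresponding one of $S/I(G)^{(n)}$. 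Summing over all multidegrees of a fixed total degree, the same inequality descends to the coarser $\mathbb{N}$-graded Betti numbers. Finally, translating from the quotient rings to the ideals via the standard homological shift $\beta_{i,j}(\cdot) = \beta_{i+1,j}(S/\cdot)$ yields part~(1).

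For part~(2), I would simply invoke the characterization $\reg M = \max\{\, j - i : \beta_{i,j}(M) \neq 0 \,\}$. Part~(1) shows that whenever $\beta_{i,j}(I(H)^{(n)}) \neq 0$ we also have $\beta_{i,j}(I(G)^{(n)}) \neq 0$, so the set of bidegrees $(i,j)$ contributing to the regularity of $I(H)^{(n)}$ is contained in the corresponding set for $I(G)^{(n)}$; taking the maximum of $j - i$ over the smaller set gives $\reg I(H)^{(n)} \leq \reg I(G)^{(n)}$.

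The proof is essentially immediate once Theorem~\ref{thm:HHZ-Scarf-symbolicpowers} is in hand, so I do not anticipate a serious obstacle; the only care required is bookkeeping — keeping track of the homological index shift between the Betti numbers of an ideal and those of its quotient ring, and verifying that the comparison of finer $\mathbb{N}^{N}$-graded Betti numbers legitimately passes to the $\mathbb{N}$-graded ones. It is worth noting that this argument recovers \cite[Lemma~4.4]{GHOS20} and \cite[Corollary~4.5]{GHOS20} without recourse to upper-Koszul simplicial complexes or the Nagata--Zariski characterization of symbolic powers, and in particular imposes no restriction on the characteristic of $\Bbbk$.
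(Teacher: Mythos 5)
Your proposal is correct and follows exactly the paper's argument: the paper proves this corollary by citing the Restriction Lemma of Herzog--Hibi--Zheng together with Theorem~\ref{thm:HHZ-Scarf-symbolicpowers}, which is precisely the combination you spell out in detail. Your write-up simply makes explicit the bookkeeping (minimality of the restricted subcomplex, passing from multigraded to graded Betti numbers, and the homological shift between an ideal and its quotient) that the paper leaves implicit.
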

\begin{proof}
    Immediately follows from  \cite[Lemma~4.4]{HHZ04} and Theorem~\ref{thm:HHZ-Scarf-symbolicpowers}.
\end{proof}

Next, we note down a known class of Scarf monomial ideals from \cite{MSY00}. A monomial ideal $I$ is called \emph{generic} if for any two distinct minimal monomial generators $m,m'$ of $I$ with the same positive degree in some variable, there exists a third minimal monomial generator $m''$ such that $m''\mid \lcm(m,m')$ and $\supp(\lcm(m,m'))=\supp(\lcm(m,m')/m'')$. We record the following result.

\begin{lemma}[{\cite[Theorem 1.5]{MSY00}}]\label{lem:generic-ideals}
    Generic monomial ideals are Scarf.
\end{lemma}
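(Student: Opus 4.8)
The plan is to apply the criterion of Lemma~\ref{lem:BPS} to the Scarf complex itself. Write $\Delta$ for the Scarf complex of the generic ideal $I$. Since $\Delta$ is always a subcomplex of the minimal free resolution of $S/I$, it suffices to prove that $\mathcal{T}_\Delta$ is a resolution, and by Lemma~\ref{lem:BPS} this reduces to showing that for every monomial $m$ the restricted complex
\[
\Delta_{\le m} = \{\sigma \in \Delta : \lcm(\sigma) \mid m\}
\]
is either empty or acyclic over $\Bbbk$. I fix $m$ with $\Delta_{\le m} \neq \varnothing$ and note that its vertices are precisely the minimal generators of $I$ dividing $m$.

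I would first handle the \emph{strongly generic} case, in which no variable occurs with the same nonzero exponent in two distinct minimal generators. The strategy is to exhibit $\Delta_{\le m}$ as a cone, which is contractible and hence acyclic. The point is to produce a single vertex $m_\star$ of $\Delta_{\le m}$ with the property that $\sigma \cup \{m_\star\}$ is again a Scarf face whose label divides $m$, for \emph{every} face $\sigma \in \Delta_{\le m}$; such an $m_\star$ is an apex and collapses $\Delta_{\le m}$ to a point. That a valid apex exists is exactly what strong genericity buys: because no two generators tie in any coordinate, least common multiples separate cleanly, and one can select $m_\star$ (by a combinatorial recipe on the exponent vectors, following \cite{MSY00}) so that no adjunction creates a repeated label or pushes the label outside $m$.

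The step I expect to be the main obstacle is the verification that adjoining $m_\star$ preserves membership in $\Delta$, i.e.\ that $\lcm(\sigma \cup \{m_\star\})$ remains a \emph{unique} label among all subsets of $\mingens(I)$. The real danger is a coincidence $\lcm(\tau) = \lcm(\sigma \cup \{m_\star\})$ for some distinct face $\tau$; such a coincidence can only stem from two generators sharing a positive exponent in some variable, which strong genericity forbids. For a merely generic ideal shared exponents are allowed, and here the witness generator in the definition is what saves the argument: whenever two minimal generators $m, m'$ agree in the positive exponent of a variable, the prescribed $m''$ with $m'' \mid \lcm(m,m')$ and $\supp(\lcm(m,m')) = \supp(\lcm(m,m')/m'')$ certifies that the face carrying the coincident label is not Scarf, so it never belongs to $\Delta$ and cannot obstruct the cone.

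To descend from strongly generic to generic I would invoke the standard deformation argument: a generic monomial ideal can be perturbed into a strongly generic one with a combinatorially identical Scarf complex, so the acyclicity of each $\Delta_{\le m}$ established above transfers back verbatim. This deformation, combined with the Lemma~\ref{lem:BPS} analysis, is the substance of \cite[Theorem~1.5]{MSY00}, which I would cite for the technical uniformization step.
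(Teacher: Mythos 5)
The paper itself offers no proof of this lemma---it is quoted verbatim from \cite{MSY00}---so your attempt has to stand on its own, and it does not: both halves are built on claims that fail. The fatal step is the cone claim in the strongly generic case. It is false that $\Delta_{\le m}$ admits an apex. Take $I=(x^3,\,x^2y,\,xy^2,\,y^3)$, which is strongly generic, and $m=x^3y^3$. The Scarf complex of $I$ is the path $x^3 - x^2y - xy^2 - y^3$ (any non-consecutive pair, e.g.\ $\{x^3,y^3\}$, shares its label $x^3y^3$ with a larger subset, so only the three consecutive edges are Scarf), and $\Delta_{\le m}$ is this entire path. It is contractible, but it is not a cone: an apex $m_\star$ would have to lie in every maximal face, and the edges $\{x^3,x^2y\}$ and $\{xy^2,y^3\}$ are disjoint; equivalently, adjoining any vertex to either edge would create a $2$-face, and $\Delta_{\le m}$ has none. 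So ``strong genericity buys an apex'' is wrong, and no recipe on exponent vectors can produce one. The same example refutes your claim that a coincidence $\lcm(\tau)=\lcm(\sigma\cup\{m_\star\})$ ``can only stem from two generators sharing a positive exponent in some variable'': here $\lcm(x^3,y^3)=\lcm(x^3,x^2y,y^3)$ although no two generators share any positive exponent. Label coincidences are ubiquitous in strongly generic ideals; what genericity controls is their structure, not their existence.

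The descent from generic to strongly generic has a different problem: the assertion that a generic ideal can be deformed to a strongly generic one ``with a combinatorially identical Scarf complex'' is essentially the content of \cite[Theorem~1.5]{MSY00} itself, so citing that result for this step makes the argument circular. The substantive point one must prove there is that no non-Scarf face of $I$ becomes Scarf after the tie-breaking perturbation; this is exactly where the witness $m''$ from the definition of genericity does real work (one needs, for each non-Scarf $\sigma$, a generator outside $\sigma$ dividing $\lcm(\sigma)$ with $\supp(\lcm(\sigma)/m'')=\supp(\lcm(\sigma))$, because such strict divisibility survives deformation and keeps $\sigma$ non-Scarf). Your invocation of $m''$---to certify that a face with a repeated label is not in $\Delta$---is true by the very definition of the Scarf complex and proves nothing. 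For comparison, the actual proofs in the literature avoid cones entirely: Bayer--Peeva--Sturmfels handle the strongly generic case and a specialization theorem for deformations, and Miller--Sturmfels--Yanagawa (also in the form presented in Miller--Sturmfels' book) identify the Scarf complex of a generic ideal with the hull complex, which always supports a resolution. As written, your proposal cannot be repaired by filling in routine details; its two key steps are, respectively, false and assumed.
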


Now that we have the main tools ready, we proceed to the main result of the section. The goal of this section is the complete characterization of pairs $(G,n)$ such that the symbolic power $I(G)^{(n)}$ is Scarf. As with squarefree powers, $I(G)^{(1)}=I(G)$, and thus we already have the characterization in this case \cite[Theorem 8.3]{FHHM24}. For the remainder of the section, we assume that $n\geq 2$.

First, we consider $G$ to be a triangle and characterize when $I(G)^{(n)}$ is Scarf. 


\begin{proposition}\label{prop:symbolic-triangle}
    Let $G$ be a triangle graph. Then $I(G)^{(n)}$ is Scarf if and only if $n$ is even.
\end{proposition}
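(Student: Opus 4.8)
The triangle $G=C_3$ on vertices $\{x_1,x_2,x_3\}$ has minimal vertex covers $\{x_1,x_2\}$, $\{x_1,x_3\}$, $\{x_2,x_3\}$, so by definition
\[
I(G)^{(n)} = (x_1,x_2)^n \cap (x_1,x_3)^n \cap (x_2,x_3)^n.
\]
My first step is to obtain an explicit description of $\mingens(I(G)^{(n)})$. A monomial $x_1^{a}x_2^{b}x_3^{c}$ lies in this intersection precisely when $a+b\geq n$, $a+c\geq n$, and $b+c\geq n$; adding these gives $a+b+c\geq \lceil 3n/2\rceil$. So the generators are the monomials of degree $\lceil 3n/2\rceil$ satisfying the three pairwise constraints, and one sees the minimal ones have total degree exactly $\lceil 3n/2\rceil$. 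The parity of $n$ enters here: when $n=2k$ is even, the ``balanced'' generator $x_1^k x_2^k x_3^k$ exists and the generator set has a clean symmetric structure, whereas when $n=2k+1$ is odd the degree $\lceil 3n/2\rceil = 3k+2$ forces the exponent vectors to be lopsided, producing extra generators that collide in their least common multiples.

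\textbf{Even case (Scarf).}
When $n$ is even I plan to show $I(G)^{(n)}$ is Scarf by invoking genericity (Lemma~\ref{lem:generic-ideals}), or failing that, by computing the Scarf complex directly against Lemma~\ref{lem:BPS}. The cleaner route is to check that $I(G)^{(n)}$ is generic: for any two minimal generators $m,m'$ sharing a positive power of some variable $x_i$, I must exhibit a third generator $m''$ dividing $\lcm(m,m')$ with $\supp(\lcm(m,m'))=\supp(\lcm(m,m')/m'')$. Because the even-case generators are the degree-$3k$ monomials with each pairwise sum at least $2k$, the exponents have enough slack to slide a unit from one coordinate to another and land on a valid generator, which is exactly the witness $m''$ genericity requires. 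If a stray coincidence of exponents breaks strict genericity, I would instead verify the Scarf condition by hand, showing every label $\lcm(\sigma)$ arising from a non-face is shared, so the Scarf complex already supports the resolution.

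\textbf{Odd case (not Scarf).}
When $n$ is odd I plan to produce a concrete failure of the acyclicity criterion in Lemma~\ref{lem:BPS}, or equivalently exhibit a repeated label forcing the Scarf complex to drop a face that the minimal resolution must contain. With $n=2k+1$, consider the three ``most balanced'' generators obtained by pushing the deficit onto each coordinate in turn, e.g.\ $x_1^{k+1}x_2^{k+1}x_3^{k}$, $x_1^{k+1}x_2^{k}x_3^{k+1}$, and $x_1^{k}x_2^{k+1}x_3^{k+1}$; their pairwise and total least common multiples coincide at $x_1^{k+1}x_2^{k+1}x_3^{k+1}$, which is the key non-unique label. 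I expect this triple to behave like the three-generator obstruction of Lemma~\ref{lem:Scarf-2-gens}: the top label is shared, so the corresponding face is absent from the Scarf complex, and the resulting complex fails to be acyclic in the relevant degree. The main obstacle will be the bookkeeping of the odd case — correctly identifying the full minimal generating set, pinning down exactly which faces survive in the Scarf complex, and verifying that the punctured complex $\Delta_{\leq m}$ at the offending label $m=x_1^{k+1}x_2^{k+1}x_3^{k+1}$ has nonzero reduced homology, thereby certifying non-Scarfness via Lemma~\ref{lem:BPS}.
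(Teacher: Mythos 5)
Your proposal has a genuine gap, and it sits at the foundation: the claim that the minimal generators of $I(G)^{(n)}=(x_1,x_2)^n\cap(x_1,x_3)^n\cap(x_2,x_3)^n$ all have total degree $\lceil 3n/2\rceil$ is false. For every $n$, the monomial $x_1^nx_2^n$ is a minimal generator (it satisfies all three pairwise constraints, and lowering either exponent violates one), and it has degree $2n>\lceil 3n/2\rceil$. The correct description, which the paper isolates as a separate claim and proves, is that $\mingens(I(G)^{(n)})$ consists of $x_1^{n-i}x_2^{n-i}x_3^{i}$ and its permutations for $0\le i\le n/2$, with degrees ranging from $\lceil 3n/2\rceil$ up to $2n$. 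This error breaks your even case as written: the generators are not ``the degree-$3k$ monomials with each pairwise sum at least $2k$,'' and the ``slide a unit'' witness generally does not exist, since shifting one unit of exponent between coordinates of a generator almost never lands on another generator. For instance, for $m=x_1^{k+1}x_2^{k+1}x_3^{k-1}$ and $m'=(x_1x_2x_3)^k$ the only generators dividing $\lcm(m,m')$ are $m$ and $m'$ themselves; this pair happens to be harmless because $m$ and $m'$ share no positive degree in any variable, but your argument has no way to see that. The genericity check that actually works --- and is what the paper does --- takes the witness to be the balanced generator $(x_1x_2x_3)^{n/2}$ for the pairs $x_1^{n-i}x_2^{n-i}x_3^{i}$, $x_1^{n-i}x_2^{i}x_3^{n-i}$ with $i<n/2$, and one can only identify this witness, and verify that these are essentially the only pairs needing one, after pinning down the generating set correctly.

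Your odd case, by contrast, is sound in outline and is a genuinely different route from the paper's. The three monomials $x_1^{k+1}x_2^{k+1}x_3^{k}$, $x_1^{k+1}x_2^{k}x_3^{k+1}$, $x_1^{k}x_2^{k+1}x_3^{k+1}$ are indeed minimal generators; they are the only minimal generators dividing $(x_1x_2x_3)^{k+1}$; and all their pairwise lcms equal $(x_1x_2x_3)^{k+1}$, so none of the edges or the triangle among them is Scarf. Hence $\Delta_{\le (x_1x_2x_3)^{k+1}}$ is three isolated vertices, which is nonempty and not acyclic, and Lemma~\ref{lem:BPS} yields non-Scarfness. (The paper argues differently: it restricts to $m=(x_1x_2x_3)^{(n+1)/2}$, identifies the HHZ-subideal $(I(G)^{(n)})^{\le m}=(x_1x_2x_3)^{(n-1)/2}I(G)$, and concludes via Lemma~\ref{lem:multiply-by-a-monomial}, the Restriction Lemma, and non-Scarfness of the triangle's edge ideal; your obstruction is more direct.) However, the bookkeeping step you explicitly defer --- that no other generator divides $(x_1x_2x_3)^{k+1}$ --- again requires the correct description of $\mingens(I(G)^{(n)})$; under your false degree claim it comes out right only by accident. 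So both halves of your proof ultimately rest on the generator classification you got wrong, and without repairing it the argument does not close.
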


\begin{proof}
    Let $I(G)=(xy,xz,yz)=(x,y)\cap (x,z)\cap (y,z)$. We will deal with the cases when $n$ is odd or even separately.

    Suppose that $n$ is odd. Due to the discussion before the proposition, we can assume that $n\geq 3$. Set $m\coloneqq (xyz)^{(n+1)/2}$. Then we have
    \begin{align*}
        (I(G)^{(n)})^{\leq m} &= \big(  (x,y)^n \big)^{\leq m} \cap\big(  (x,z)^n \big)^{\leq m} \cap\big(  (y,z)^n \big)^{\leq m} \\
        &= \big(  x^{\frac{n-1}{2}}y^{\frac{n+1}{2}}, x^{\frac{n+1}{2}}y^{\frac{n-1}{2}} \big) \cap\big(  x^{\frac{n-1}{2}}z^{\frac{n+1}{2}}, x^{\frac{n+1}{2}}z^{\frac{n-1}{2}} \big)\cap\big( y^{\frac{n-1}{2}}z^{\frac{n+1}{2}}, y^{\frac{n+1}{2}}z^{\frac{n-1}{2}}  \big) \\
        &= (xyz)^{\frac{n-1}{2}} I(G).
    \end{align*}
    Since $I(G)$ is not Scarf, neither is $I(G)^n$ by Lemma~\ref{lem:multiply-by-a-monomial}, as desired.

    Now suppose that $n$ is even. To show that $I(G)^{(n)}$ is Scarf, by Lemma~\ref{lem:generic-ideals}, it suffices to show that $I$ is generic. To do so, we need an explicit description of the minimal monomial generators, which is given in the following claim. 
    \begin{claim}\label{clm:generators-C3}
        We have
        \[
        I(G)^{(n)}=(x,y)^n \cap  (x,z)^n \cap  (y,z)^n = (x^{n-i}y^{n-i}z^i, x^{n-i}y^{i}z^{n-i}, x^{i}y^{n-i}z^{n-i}\colon 0\leq i \leq n/2 ).
        \]
    \end{claim}
    Assume the claim. Given two distinct monomial generators $m,m'$ of $I(G)^{(n)}$ with the same positive degree in some variable. Then without loss of generality, we can assume $m=x^{n-i}y^{n-i}z^i$ and $m'=x^{n-i}y^{i}z^{n-i}$ for some $i\leq  n/2$, and since $m\neq m'$, we have $i<n/2$, or equivalently, $n-i>n/2$. Then we can set $m''= (xyz)^{n/2}$. We observe that $\lcm(m,m')=(xyz)^{n/2}$, and thus $m''\mid \lcm(m,m')$, and $\lcm(m,m')=(xyz)^{n/2}$ and $\lcm(m,m')/m''= (xyz)^{n-i-n/2}$ have the same support, as desired.
\end{proof}

\begin{proof}[Proof of Claim~\ref{clm:generators-C3}]
    The first equality is from our definition of symbolic powers. For the second equality, the reverse inclusion $\supseteq$ is clear. For the inclusion $\subseteq$, without loss of generality, it suffices to show that if $x^ay^bz^c$, where $a\geq b\geq c$, is minimal monomial generator of $I(G)^{(n)}$, then $a=b=n-c$, and $c\leq n/2$. 

    First we note that a monomial $x^{l_1}y^{l_2}z^{l_3}$ is in $I(G)^{(n)}=(x,y)^n \cap  (x,z)^n \cap  (y,z)^n $ if and only if we have three inequalities
    \[
    l_1+l_2 \geq n, \quad l_1+l_3 \geq n, \quad l_2+l_3 \geq n.
    \]
    Since $x^ay^bz^c$ is a minimal generator of $I(G)^{(n)}$, we have $b+c\geq n$, and the monomial $x^{a-1}y^bz^c$ is not in $I(G)^{(n)}$. Hence we have either $a-1+b<n$ or $a-1+c<n$. Since the former implies the latter, we always have $a-1+c<n$. Thus $b-1+c\leq a-1+c<n$, or $b+c<n+1$. This forces $b+c=n$. In particular, this means $a-1+c<n = b+c$, or equivalently, $a<b+1$. This forces $a=b$. Finally, we have $n\geq b+c \geq 2c$, and thus $c\leq n/2$, as desired.
\end{proof}


Next, we will provide a list of forbidden structures for when $I(G)^{(n)}$ is Scarf.

\begin{proposition}\label{prop:forbidden-symbolic}
    Assume that $n\geq 2$. Let $G$ be one of following graphs:
    \begin{enumerate}
        \item a claw graph \clawsymb, a path of length $3$, or a $4$-cycle $C_4$;
        \item a paw graph \pawsymb;
        \item a diamond graph \diamondsymb;
        \item a complete graph on 4 vertices \Kfoursymb;
        \item the disjoint union of an edge and a path of length 2;
        \item the disjoint union of an edge and a triangle;  
        \item the disjoint union of three edges.
    \end{enumerate}
    Then $I(G)^{(n)}$ is not Scarf.
\end{proposition}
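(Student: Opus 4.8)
The plan is to sort the seven graphs according to whether they are bipartite, using the Restriction Lemma (Theorem~\ref{thm:HHZ-Scarf-symbolicpowers}) and Proposition~\ref{prop:symbolic-triangle} as the main levers. The claw, the path of length $3$, the $4$-cycle $C_4$, the graph in (5) (an edge together with a path of length $2$), and the three disjoint edges of (7) are all bipartite, so Lemma~\ref{lem:bipartite-symbolic} reduces matters to showing that the \emph{ordinary} power $I(G)^n$ is not Scarf. For the connected cases ($P_4$, $C_4$, claw) one may read this off the classification of Scarf ordinary powers in \cite{FHHM24}. For the claw and the three disjoint edges I would instead argue directly: the claw ideal factors as $x_0(x_1,x_2,x_3)$, so by Lemma~\ref{lem:multiply-by-a-monomial} its $n$-th power is Scarf iff $(x_1,x_2,x_3)^n$ is; and for three disjoint edges the generators $u,v,w$ of $I(G)$ are pairwise coprime quadratic monomials, so every least common multiple behaves exactly as in $(x,y,z)^n$, whence by Lemma~\ref{lem:BPS} the two ideals are simultaneously Scarf or not. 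It then suffices to show $(x,y,z)^n$ is not Scarf for $n\geq 2$: for $n=2$ its Scarf complex is exactly a hexagon on the six generators $x^2,xy,y^2,yz,z^2,xz$, which has nonvanishing first homology, so by Lemma~\ref{lem:BPS} it is not a resolution. I would then extract the analogous unfilled $1$-cycle for general $n\geq 2$, and dispose of the graph in (5) by the same bipartite reduction followed by a direct restriction.

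For the four non-bipartite graphs --- the paw, the diamond, $K_4$, and the edge-plus-triangle --- each contains an induced triangle. When $n$ is odd, Proposition~\ref{prop:symbolic-triangle} says the triangle's symbolic power is not Scarf, so the Restriction Lemma (Theorem~\ref{thm:HHZ-Scarf-symbolicpowers}) immediately forces $I(G)^{(n)}$ to be non-Scarf. This handles every non-bipartite graph for odd exponents at once.

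The crux, and the part I expect to be hardest, is the even exponent case for these four graphs. Here restriction to the triangle is unavailable, since Proposition~\ref{prop:symbolic-triangle} makes the triangle Scarf; moreover every proper induced subgraph of these graphs is a triangle, a path of length at most $2$, or two disjoint edges, so one does not expect \emph{any} restriction to a proper induced subgraph to help, and the obstruction must be intrinsic to each graph. For each such $G$ I would compute $I(G)^{(n)}=\bigcap_{A}\mathfrak{p}_A^n$ as the monomial ideal cut out on exponent vectors by the inequalities $\sum_{x\in A}(\text{exponent of }x)\geq n$ over the minimal vertex covers $A$, read off enough of its minimal generators, and then locate a non-Scarf configuration. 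Concretely, I would aim to produce two distinct minimal generators whose least common multiple is shared by a third generator (so the Scarf complex omits the corresponding face), and then exhibit a monomial $m$ for which the restricted Scarf subcomplex $\Delta_{\leq m}$ is nonempty with nonzero reduced homology, so that Lemma~\ref{lem:BPS} rules out the Scarf complex being a resolution.

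The main obstacle is carrying this even-$n$ analysis out uniformly in $n$ rather than one exponent at a time, since the minimal generating sets grow with $n$ (already $I(\text{paw})^{(2)}$ has seven minimal generators). I would therefore look for a single family of witness generators whose exponent vectors depend linearly on $n$, verify the defining inequalities for each member, and check that the least-common-multiple collision, and hence the homological obstruction, persists for every even $n\geq 2$. I expect the diamond and $K_4$ to reduce, after factoring out a common monomial via Lemma~\ref{lem:multiply-by-a-monomial}, to a $C_4$-type octahedral obstruction analogous to the hexagon appearing in the bipartite reduction, which would let all four even cases be treated by the same mechanism.
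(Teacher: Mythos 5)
Your treatment of the bipartite graphs and of the odd exponents is sound and close to the paper's: cases in (1) follow from Lemma~\ref{lem:bipartite-symbolic} together with \cite{FHHM24}, and your lcm-lattice identification of $I(G)^n$ for three disjoint edges with $(x,y,z)^n$ is a legitimate (if only sketched) alternative to the paper's computation; likewise, restricting to the induced triangle via Theorem~\ref{thm:HHZ-Scarf-symbolicpowers} and Proposition~\ref{prop:symbolic-triangle} does dispose of the paw, diamond, $K_4$, and edge-plus-triangle when $n$ is odd (the paper uses exactly this for $K_4$). Two smaller soft spots: for case (5) the graph is disconnected, so \cite{FHHM24} does not apply and ``a direct restriction'' is left unspecified; and the non-Scarfness of $(x,y,z)^n$ for all $n\geq 2$ is asserted from the $n=2$ hexagon but not proved (it is true --- for $n\geq 3$ the generator $xyz\cdots$ in the interior of the staircase is an isolated vertex of the Scarf complex --- but you would need to write this down).

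The genuine gap is precisely where you say the crux is: the even-$n$ case for the four non-bipartite graphs, which you leave as a plan rather than a proof. You correctly note that restriction to proper \emph{induced subgraphs} cannot help there, but you overlook that the Restriction Lemma applies to \emph{arbitrary} monomials $m$, not only to monomials of the form $(\prod_{x\in V(H)}x)^n$. This is the paper's key move: it chooses a monomial with mixed exponents tailored to the primary decomposition --- e.g.\ $m=w^{n-1}x^nyz$ for the paw, $m=w^{n-1}xy^{n-1}z$ for the diamond, $m=wx^ky^kz^k$ for $K_4$ with $n=2k$, $m=u^{n-1}v^{n-1}xyz$ for the edge plus triangle --- and computes $\bigl(I(G)^{(n)}\bigr)^{\leq m}$ prime by prime using Lemma~\ref{lem:HHZ-intersection}. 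The result is always a monomial times a fixed small ideal independent of $n$: for the paw, $w^{n-2}x^{n-1}(yz,wxy,wxz)$, whose Scarf complex is disconnected; for the diamond and $K_4$, a monomial times $(wxy,wxz,wyz,xyz)$, which is not Scarf by Lemma~\ref{lem:Scarf-2-gens}. Non-Scarfness of $I(G)^{(n)}$ then follows from Lemmas~\ref{lem:multiply-by-a-monomial} and~\ref{lem:HHZ-Taylor-Scarf}, uniformly in $n$ and with no parity split except the one convenience reduction for $K_4$. Your proposed substitute --- a family of witness generators with exponents linear in $n$ plus a homological obstruction persisting for all even $n$ --- is exactly the hard, unbounded computation that this restriction trick collapses, and you have not carried it out; your guess that the diamond and $K_4$ reduce to a ``$C_4$-type octahedral'' cycle obstruction is also off target (the actual obstruction is the totally disconnected Scarf complex of Lemma~\ref{lem:Scarf-2-gens}, not an unfilled cycle). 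As it stands, cases (2), (3), (4), and (6) are unproven for even $n$.
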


\begin{proof}
    Except for the first case, we will find an HHZ-subideal of $I(G)^{(n)}$ that is not Scarf, and thus the results would follow from Lemma~\ref{lem:HHZ-Taylor-Scarf}.
    \begin{enumerate}
        \item If $G$ is a claw graph \clawsymb, a path of length $3$, or a $4$-cycle, then $G$ is bipartite, and thus $I(G)^{(n)} = I(G)^n$ by Lemma~\ref{lem:bipartite-symbolic}. Thus $I(G)^{(n)}$ is not Scarf by \cite[Theorem 8.3]{FHHM24}.
        \item If $G$ is a paw graph \pawsymb, i.e., $I(G)=(wx,xy,yz,xz)$, then we have
        \[
        I(G) =  (w,y,z) \cap (x,y) \cap (x,z).
        \]
        Setting $m\coloneqq w^{n-1}x^nyz$ and keeping Proposition~\ref{prop:restriction and power switch} and Lemma~\ref{lem:HHZ-intersection} in mind, we get
        \begin{align*}
            (I(G)^{(n)})^{\leq m}&= \big((w,y,z)^n\big)^{\leq m} \cap \big((x,y)^n\big)^{\leq m}\cap \big((x,z)^n\big)^{\leq m}\\
            &=\big( w^{n-1}y,w^{n-1}z, w^{n-2}yz \big) \cap \big( x^n,x^{n-1}y \big) \cap \big( x^n,x^{n-1}z \big)  \\
            &=w^{n-2}x^{n-1}(yz,wxy,wxz).
        \end{align*}  
        Observe that the Scarf simplicial complex associated to the ideal $(yz,wxy,wxz)$ is disconnected. Therefore $(yz,wxy,wxz)$ is not Scarf by Lemma~\ref{lem:BPS}, and thus neither is $(I(G)^{(n)})^{\leq m}$ by Lemma~\ref{lem:multiply-by-a-monomial}, as desired.

        
        \item If $G$ is a diamond graph, i.e., $I(G)=(wx,wy,wz,xy,yz)$, then we have
        \[
        I(G) =  (w,y) \cap (w,x,z) \cap (x,y,z).
        \]
        Set $m\coloneqq w^{n-1}xy^{n-1}z$. Then 
        \begin{align*}
            (I(G)^{(n)})^{\leq m}&= \big((w,y)^n\big)^{\leq m} \cap \big((w,x,z)^n\big)^{\leq m}\cap \big((x,y,z)^n\big)^{\leq m}\\
            &=\big( (w,y)^n \big)^{\leq m} \cap \big( w^{n-1}x, w^{n-1}z, w^{n-2}xz \big) \cap \big( xy^{n-1}, y^{n-1}z, xy^{n-2}z  \big)  \\
            &=\big( w^{n-1}y, w^{n-2}y^2,\dots, wy^{n-1} \big) \cap \big( w^{n-1}xy^{n-1},w^{n-1}y^{n-1}z,w^{n-2}xy^{n-2}z  \big)  \\
            &=w^{n-2}y^{n-2}(wxy,wxz,wyz,xyz).
        \end{align*}
        By Lemma~\ref{lem:Scarf-2-gens}, the ideal $(wxy,wxz,wyz,xyz)$ is not Scarf. Then by Lemma~\ref{lem:multiply-by-a-monomial} $(I(G)^{(n)})^{\leq m}$ is not Scarf either, as desired.

        
        \item If $G$ is a complete graph on 4 vertices, i.e., $I(G)=(wx,wy,wz,xy,xz,yz)$,  then we first observe that $I(G)^n$ is not Scarf if $n$ is odd by Proposition~\ref{prop:symbolic-triangle} since $G$ contains an induced triangle. Thus we can assume that $n=2k$ for some $k\geq 1$. We have the following decomposition:
        \[
        I(G) =  (w,x,y) \cap (w,x,z) \cap (w,y,z) \cap (x,y,z).
        \]
        Set $m\coloneqq wx^ky^{k}z^{k}$. Then
        \begin{align*}
            (I(G)^{(n)})^{\leq m}&= \big((w,x,y)^n\big)^{\leq m} \cap \big((w,x,z)^n\big)^{\leq m}\cap \big((w,y,z)^n\big)^{\leq m}\cap \big((x,y,z)^n\big)^{\leq m}\\
            &=\begin{multlined}[t]
                \big( wx^{k}y^{k-1}, wx^{k-1}y^k,x^ky^k  \big) \cap \big(wx^{k}z^{k-1}, wx^{k-1}z^k,x^kz^k  \big) \cap\\
                \big( wy^{k}z^{k-1}, wy^{k-1}z^k,y^kz^k  \big) \cap \big( (x,y,z)^n  \big)^{\leq m}
            \end{multlined}  \\
            &= \big( x^{k-1}y^{k-1}z^{k-1} (wxy,wxz,wyz,xyz) \big)\cap \big( (x,y,z)^n  \big)^{\leq m} \\
            &=x^{k-1}y^{k-1}z^{k-1} \big( wxy,wxz,wyz,xyz \big).
        \end{align*}
        By  Lemma~\ref{lem:Scarf-2-gens}, the ideal $(wxy,wxz,wyz,xyz)$ is not Scarf, and hence neither is $(I(G)^{(n)})^{\leq m}$ by Lemma~\ref{lem:multiply-by-a-monomial}, as desired.
        
        \item If $G$ is the disjoint union of an edge and a path of length $2$, i.e., $I(G)=(uv, xy, yz)$, then $G$ is bipartite and thus $I(G)^{(n)}=I(G)^n$ by Lemma~\ref{lem:bipartite-symbolic}.
        Set $m\coloneqq u^{n-1}v^{n-1}xy^{2}z$. Then  we have
        \begin{align*}
            (I(G)^{(n)})^{\leq m} = (I(G)^{n})^{\leq m}=  u^{n-2}v^{n-2}(uvxy,uvyz,xy^2z).
        \end{align*}
        As in case (2), one can see that $(uvxy,uvyz,xy^2z)$ is not Scarf. By Lemma~\ref{lem:multiply-by-a-monomial}, it follows that $(I(G)^{(n)})^{\leq m}$ is not Scarf either, as desired.


         \item Let $G$ be the disjoint union of an edge and a triangle, i.e., $I(G)=(uv, xy, xz, yz)$. Then,
         \[I(G)=(u,x,y)\cap (u,x,z)\cap (u,y,z)\cap (v,x,y)\cap (v,x,z)\cap (v,y,z). \]
        Set $m\coloneqq u^{n-1}v^{n-1}xyz$. Observe that $(I(G)^{(n)})^{\leq m}=I_1\cap I_2$ for
        \[I_1= \big((u,x,y)^n\big)^{\leq m} \cap \big((u,x,z)^n\big)^{\leq m} \cap \big((u,y,y)^n\big)^{\leq m} \]
        and 
        \[I_2=\big((v,x,y)^n\big)^{\leq m} \cap \big((v,x,z)^n\big)^{\leq m} \cap \big((v,y,z)^n\big)^{\leq m}.\]
        Now, we obtain
        \begin{align*}
           I_1 & = \big( u^{n-1}x, u^{n-1}y , u^{n-2}xy\big) \cap \big(u^{n-1}x, u^{n-1}z, u^{n-2}xz\big) \cap \big(u^{n-1}y, u^{n-1}z, u^{n-2}yz\big) \\
            & =  u^{n-2} (uxy,uxz,uyz,xyz).
        \end{align*}
        By the symmetry, $I_2= v^{n-2} (vxy,vxz,vyz,xyz)$. Therefore,
        \[(I(G)^{(n)})^{\leq m}=I_1\cap I_2=u^{n-2}v^{n-2} (uvxy,uvxz,uvyz,xyz).\]
        The end of the proof is the same as Case (5).
        
        \item If $G$ is the disjoint union of three edges, i.e., $I(G)=(uv, wx, yz)$, then $G$ is bipartite and
        thus $I(G)^{(n)}=I(G)^n$ by Lemma~\ref{lem:bipartite-symbolic}.
        Setting $m\coloneqq u^{n-1}v^{n-1}wxyz$, we see that
        \begin{align*}
            (I(G)^{(n)})^{\leq m} = (I(G)^{n})^{\leq m}=  u^{n-2}v^{n-2}(uvwx,uvyz,wxyz).
        \end{align*}
         The proof follows as in Case (2).\qedhere
         \end{enumerate} 
         \end{proof}

We are now ready to prove the main theorem of this section.

\begin{theorem}\label{thm:Scarf-symbolic-powers}
    Assume $n\geq 2$. Let $G$ be a graph without isolated vertices. Then $I(G)^{(n)}$ is Scarf if and only if one of the following holds:
    \begin{enumerate}
        \item $G$ is a path of length 1 or 2;
        \item $G$ is a triangle and $n$ is even;
        \item $G$ is the disjoint union of two edges.
    \end{enumerate}
\end{theorem}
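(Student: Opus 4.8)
The plan is to prove both implications. For the sufficiency direction, I would verify each of the three listed graphs gives a Scarf symbolic power. If $G$ is a single edge, then $I(G) = (xy)$ is principal, so $I(G)^{(n)} = I(G)^n = (x^ny^n)$ is again principal and trivially Scarf. If $G$ is a path of length $2$, then $G$ is bipartite, so by Lemma~\ref{lem:bipartite-symbolic} we have $I(G)^{(n)} = I(G)^n$; since $I(G) = (xy, yz)$ is a complete intersection (in fact $y \mid \gcd$), one computes directly that its powers are Scarf, e.g.\ by checking genericity via Lemma~\ref{lem:generic-ideals} or by observing $I(G)^n = y^n(x,z)^n$ combined with Lemma~\ref{lem:multiply-by-a-monomial} and the known Scarfness of $(x,z)^n$. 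The triangle case is exactly Proposition~\ref{prop:symbolic-triangle}. The disjoint union of two edges is bipartite, so $I(G)^{(n)} = I(G)^n$ by Lemma~\ref{lem:bipartite-symbolic}; writing $I(G) = (uv, xy)$, the two generators involve disjoint variables, so $I(G)^n = \big(u^iv^i x^{n-i}y^{n-i} : 0 \leq i \leq n\big)$ and one checks this is Scarf (each generator has a variable power that is unique among the generators, so it is even Taylor-like generic).

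For the necessity direction, the strategy is to show that if $G$ is \emph{not} one of the three listed graphs, then $G$ contains an induced subgraph $H$ from the forbidden list of Proposition~\ref{prop:forbidden-symbolic}, or $G$ is a triangle with $n$ odd. Then by the Restriction Lemma (Theorem~\ref{thm:HHZ-Scarf-symbolicpowers}), $I(H)^{(n)}$ would be an HHZ-subideal of $I(G)^{(n)}$, and since $I(H)^{(n)}$ is not Scarf, neither is $I(G)^{(n)}$. The combinatorial heart is therefore a graph-theoretic dichotomy: every graph without isolated vertices that is not a single edge, a path of length $2$, a triangle, or a disjoint union of two edges must contain as an induced subgraph one of the seven graphs in Proposition~\ref{prop:forbidden-symbolic}. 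I would organize this by case analysis on the number of connected components and their sizes. If $G$ has at least three components, it contains three disjoint edges (case (7)); if it has exactly two components and is not the union of two single edges, then some component has at least three vertices, yielding either case (5) or case (6) depending on whether that component contains a triangle; if $G$ is connected with at least four vertices, I would argue it contains one of the $4$-vertex induced subgraphs (claw, $P_4$, $C_4$, paw, diamond, or $K_4$), which exhaust all connected graphs on four vertices having no isolated vertex among the listed forbidden structures.

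The main obstacle I expect is the exhaustiveness of this induced-subgraph argument in the connected case. The delicate point is that the forbidden list in Proposition~\ref{prop:forbidden-symbolic}(1)--(4) must literally cover \emph{every} connected graph on exactly four vertices, and then one must argue that any connected graph on five or more vertices contains at least one of these as an \emph{induced} subgraph (not merely as a subgraph). This induced-subgraph closure requires care: given a connected graph on $\geq 5$ vertices, I would select a spanning tree or a breadth-first layering and extract four vertices inducing one of the forbidden patterns, handling separately the cases of a long induced path, a high-degree vertex (forcing a claw unless its neighbors create triangles, which forces a paw/diamond/$K_4$), and intermediate configurations. Packaging this cleanly so that the finitely many $4$-vertex graphs are genuinely exhaustive---and confirming the only connected graphs avoiding all of them are $K_2$, $P_3$, and $C_3$---is the crux of the proof.
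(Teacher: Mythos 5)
Your overall strategy coincides with the paper's: sufficiency via Proposition~\ref{prop:symbolic-triangle} and genericity (Lemma~\ref{lem:generic-ideals}), necessity via the Restriction Lemma (Theorem~\ref{thm:HHZ-Scarf-symbolicpowers}) applied to the forbidden list of Proposition~\ref{prop:forbidden-symbolic}, with the same reduction by number of connected components. Two points, however, need repair. First, the step you call the crux --- that every connected graph other than an edge, a path of length $2$, or a triangle contains an induced forbidden subgraph --- needs no spanning-tree or breadth-first extraction, and there are no ``intermediate configurations'' to handle: the remark you make only in passing is already the whole argument. If some vertex $v$ has three neighbours $a,b,c$, the induced subgraph on $\{v,a,b,c\}$ is a claw, paw, diamond, or $K_4$ according to whether $\{a,b,c\}$ spans $0,1,2$, or $3$ edges, and all four are forbidden. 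Hence every vertex of $G$ has degree at most $2$, so $G$ is a path or a cycle; a path of length $\geq 3$ contains an induced path of length $3$, a cycle of length $\geq 5$ does as well, $C_4$ is itself forbidden, and the triangle forces $n$ even by Proposition~\ref{prop:symbolic-triangle}. This is exactly how the paper closes the connected case, and it also makes your separate discussion of graphs on exactly four vertices unnecessary.

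Second, a step that would fail: for the path of length $2$, the ideal $I(G)^n = y^n(x,z)^n$ is \emph{not} generic, so your first proposed route (``checking genericity via Lemma~\ref{lem:generic-ideals}'' on $I(G)^n$ itself) does not work. Indeed, for $n\geq 2$ the consecutive generators $x^iy^nz^{n-i}$ and $x^{i+1}y^nz^{n-i-1}$ have the same positive degree $n$ in $y$, yet no third generator divides their least common multiple $x^{i+1}y^nz^{n-i}$, violating the definition of genericity. Your second route is the correct one and should be the only one kept: $(x,z)^n$ is generic, since no two of its generators share the same positive degree in any variable, hence Scarf by Lemma~\ref{lem:generic-ideals}, and Lemma~\ref{lem:multiply-by-a-monomial} then transfers Scarfness to $y^n(x,z)^n$. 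With these two repairs your plan becomes a complete proof, essentially identical to the paper's.
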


\begin{proof}
    Suppose that one of the above scenarios occurs. If $G$ is a path of length $1$ or $2$, the result follows from \cite[Theorem 8.3]{FHHM24}. If $G$ is a triangle and $n$ is even, the result follows from Proposition~\ref{prop:symbolic-triangle}. Finally, assume that $G$ is the disjoint union of two edges, i.e., $I(G)=(wx,yz)$, then $G$ is bipartite, and thus $I(G)^{(n)}=I(G)^n$. It is straightforward that $I(G)^{(n)}$ is generic since no two distinct minimal monomial generators share the same positive degree in any variable. By Lemma~\ref{lem:generic-ideals}, $I(G)^{(n)}$ is Scarf, as desired.
    
    Conversely, suppose that $I(G)^{(n)}$ is Scarf. Due to Theorem~\ref{thm:HHZ-Scarf-symbolicpowers}, $G$ does not contain any graph in the list in Proposition~\ref{prop:forbidden-symbolic} as an induced subgraph. Since $G$ does not contain the disjoint union of three edges, it has at most two connected components. If $G$ has two connected components, then $G$ must be the disjoint union of two edges because of items (5) and (6) of Proposition~\ref{prop:forbidden-symbolic}.
    
    Now, we can assume $G$ is connected. Since $G$ has no induced claw \clawsymb, paw \pawsymb, diamond \diamondsymb, or $K_4$ \Kfoursymb, it follows that every vertex of $G$ has degree at most $2$. This implies that $G$ is either a path or a cycle. If $G$ is a cycle, then by (1) of Proposition~\ref{prop:forbidden-symbolic}, $G$ must be a triangle and $n$ must be even by Proposition~\ref{prop:symbolic-triangle}. If $G$ is a path, then it must be of length $1$ or $2$ by the first item of Proposition~\ref{prop:forbidden-symbolic}, as desired.
\end{proof}

We can also extend the main theorem in \cite{FHHM24} from connected graphs to all graphs. First, we note down the Restriction Lemma for ordinary powers of edge ideals.

\begin{lemma}[{Restriction Lemma for the Scarf complexes of ordinary powers}]\label{lem:HHZ-Scarf-regularpowers}  
    Let $H$ be an induced subgraph of $G$, and let $n$ be a positive integer. Let $m$ be the product of all vertices of $H$. Then 
    \[\big( I(G)^{n} \big)^{\leq m^n} = I(H)^{n}.\]
    
    In particular, if $I(G)^{n}$ is Scarf, then so is $I(H)^{n}$.
\end{lemma}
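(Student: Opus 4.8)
The plan is to mirror the proof of the symbolic-power restriction lemma (Theorem~\ref{thm:HHZ-Scarf-symbolicpowers}), but to exploit the fact that for ordinary powers the machinery is strictly simpler: since $I(G)$ is a squarefree monomial ideal and $m = \prod_{v\in V(H)} v$ is a squarefree monomial, the equality $(I(G)^n)^{\leq m^n} = (I(G)^{\leq m})^n$ is already packaged for us in Lemma~\ref{lem:HHZ-powers}. So the entire proof reduces to identifying $I(G)^{\leq m}$ and then invoking that lemma, with no need for the primary decomposition, Lemma~\ref{lem:HHZ-intersection}, or Lemma~\ref{lem:MVC ideal version} that were required in the symbolic case.

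First I would record the identity $I(G)^{\leq m} = I(H)$. The minimal generators of $I(G)$ are the monomials $xy$ with $\{x,y\}\in E(G)$, and such a monomial divides $m$ precisely when both $x,y\in V(H)$; because $H$ is an induced subgraph, the edges $\{x,y\}\in E(G)$ with $x,y\in V(H)$ are exactly the edges of $H$. Hence $I(G)^{\leq m}=I(H)$, which is the same observation already used at the start of the proof of Lemma~\ref{lem:HHZ-Scarf-squarefreepower}. Next I would apply Lemma~\ref{lem:HHZ-powers} with $I=I(G)$, whose hypotheses (squarefreeness of both $I(G)$ and $m$) hold automatically, to obtain
\[
\big(I(G)^n\big)^{\leq m^n} = \big(I(G)^{\leq m}\big)^n = I(H)^n,
\]
which is the desired equality. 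Finally, for the concluding assertion I would observe that the displayed equality exhibits $I(H)^n$ as the HHZ-subideal $(I(G)^n)^{\leq m^n}$ of $I(G)^n$, so that the preservation of the Scarf property follows immediately from Lemma~\ref{lem:HHZ-Taylor-Scarf}.

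There is essentially no substantive obstacle here: the statement is a direct specialization of the earlier squarefree machinery, and the only points requiring care are the routine verification that both $I(G)$ and $m$ are squarefree (so that Lemma~\ref{lem:HHZ-powers} is applicable) and the correct identification $I(G)^{\leq m}=I(H)$ via the induced-subgraph hypothesis. In particular, the exponent $m^n$ rather than $m$ in the restriction is exactly what Lemma~\ref{lem:HHZ-powers} is designed to handle, so no additional argument about the support of $m^n$ is needed beyond what is already established there.
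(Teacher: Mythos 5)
Your proof is correct and follows exactly the paper's own argument: the paper likewise deduces the equality from Lemma~\ref{lem:HHZ-powers} (together with the identification $I(G)^{\leq m}=I(H)$) and obtains the Scarf preservation from Lemma~\ref{lem:HHZ-Taylor-Scarf}. Your write-up simply spells out the steps that the paper's one-line proof leaves implicit.
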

\begin{proof}
    The result follows immediately from Lemma~\ref{lem:HHZ-Taylor-Scarf} and Lemma~\ref{lem:HHZ-powers}.
\end{proof}

Now, we are ready to prove our result on Scarf ordinary powers of edge ideals.

\begin{theorem}\label{thm:Scarf-regular-powers}
    Assume $n\geq 2$. Let $G$ be a graph without isolated vertices. Then $I(G)^{n}$ is Scarf if and only if one of the following holds:
    \begin{enumerate}
        \item $G$ is a path of length 1 or 2;
        \item $G$ is the disjoint union of two edges.
    \end{enumerate}
\end{theorem}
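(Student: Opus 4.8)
The plan is to mirror the proof of Theorem~\ref{thm:Scarf-symbolic-powers} almost verbatim, replacing symbolic powers $I(G)^{(n)}$ by ordinary powers $I(G)^n$ throughout, and using Lemma~\ref{lem:HHZ-Scarf-regularpowers} in place of Theorem~\ref{thm:HHZ-Scarf-symbolicpowers} as the restriction tool. The crucial simplification is that the forbidden-structure analysis becomes cleaner: since we no longer need to separate the bipartite case, most forbidden graphs from Proposition~\ref{prop:forbidden-symbolic} already carry over, because for bipartite graphs $I(G)^{(n)} = I(G)^n$ by Lemma~\ref{lem:bipartite-symbolic}, so those cases were really statements about ordinary powers anyway.

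For the forward direction I would first verify that (1) and (2) give Scarf ordinary powers. For $G$ a path of length $1$ or $2$ this is \cite[Theorem~8.3]{FHHM24}. For $G$ the disjoint union of two edges, $I(G)^n = (wx, yz)^n$, and I would observe directly that this ideal is generic --- no two distinct minimal generators share the same positive degree in any variable --- so it is Scarf by Lemma~\ref{lem:generic-ideals}. For the converse, assume $I(G)^n$ is Scarf. By Lemma~\ref{lem:HHZ-Scarf-regularpowers}, every induced subgraph $H$ of $G$ has $I(H)^n$ Scarf. I would then need to establish that each graph in the relevant forbidden list fails to be Scarf for ordinary powers; fortunately, all the forbidden graphs that matter here (the claw, $P_3$, $C_4$, the disjoint union of an edge with a $P_2$ or with a triangle, and the disjoint union of three edges) are exactly those handled in Proposition~\ref{prop:forbidden-symbolic}, and for the bipartite ones among them the non-Scarfness of $I(G)^{(n)}$ \emph{is} the non-Scarfness of $I(G)^n$. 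The non-bipartite forbidden pieces --- paw, diamond, $K_4$, and the edge-plus-triangle --- require a short separate verification for ordinary powers, which I would carry out by the same HHZ-subideal computations as in Proposition~\ref{prop:forbidden-symbolic}, choosing a monomial $m$ and computing $(I(G)^n)^{\leq m}$ to land on a non-Scarf ideal detected via Lemma~\ref{lem:Scarf-2-gens} or Lemma~\ref{lem:BPS}.

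With the forbidden structures in hand, the structural deduction is identical to the symbolic case. Since the disjoint union of three edges is forbidden, $G$ has at most two connected components; if it has two, the edge-plus-$P_2$ and edge-plus-triangle obstructions force each component to be a single edge, so $G$ is the disjoint union of two edges. If $G$ is connected, the absence of induced claw, paw, diamond, and $K_4$ forces every vertex to have degree at most $2$, so $G$ is a path or a cycle; cycles of length $\geq 4$ contain an induced $C_4$ or $P_3$, and the triangle is excluded because $C_3$ itself has $I(C_3)^n$ non-Scarf for all $n \geq 2$ (note this is where the ordinary case \emph{differs} from the symbolic one, since $I(C_3)^n$ is never Scarf whereas $I(C_3)^{(n)}$ is Scarf for even $n$). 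Thus the only surviving connected graphs are paths of length $1$ or $2$.

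The main obstacle I anticipate is the triangle case: for ordinary powers I must show $I(C_3)^n$ is \emph{never} Scarf for $n \geq 2$, which does not follow from Proposition~\ref{prop:forbidden-symbolic} and is genuinely different from the symbolic conclusion of Proposition~\ref{prop:symbolic-triangle}. I would establish this by noting that $C_3$ is a triangle and invoking \cite[Theorem~8.3]{FHHM24} (or the odd-$n$ argument of Proposition~\ref{prop:symbolic-triangle} together with a direct even-$n$ computation showing $I(C_3)^n$ fails to be Scarf). Everything else is bookkeeping that transfers directly from the symbolic setting, so the proof should be short once the triangle and the handful of non-bipartite forbidden subgraphs are dispatched.
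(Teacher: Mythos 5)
Your outline reaches the right conclusion, but it is considerably more laborious than the paper's proof, and one step as you describe it would not go through. The paper's proof is short precisely because \cite[Theorem~8.3]{FHHM24} characterizes Scarfness of $I(G)^n$ for \emph{every} connected graph $G$ (not merely for paths), so the entire connected case is disposed of by a single citation --- no forbidden-subgraph analysis, no degree bound, no separate triangle discussion. Only the disconnected case needs an argument: restricting to each connected component (Lemma~\ref{lem:HHZ-Scarf-regularpowers}) and applying \cite[Theorem~8.3]{FHHM24} forces every component to be an edge or a path of length $2$, and restricting to pairs of components rules out three or more components; the surviving candidates (edge plus edge, edge plus $P_2$, $P_2$ plus $P_2$) are all bipartite, so $I(G)^n=I(G)^{(n)}$ by Lemma~\ref{lem:bipartite-symbolic} and Theorem~\ref{thm:Scarf-symbolic-powers} finishes. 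Your plan instead re-derives the connected case from scratch via forbidden structures, which is redundant given that you yourself invoke \cite[Theorem~8.3]{FHHM24} for the triangle: the same citation covers the claw, paw, diamond, $K_4$, and every other connected graph at once.

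The step that would fail as written is your claim that the non-bipartite forbidden pieces (paw, diamond, $K_4$, edge plus triangle) can be handled by ``the same HHZ-subideal computations as in Proposition~\ref{prop:forbidden-symbolic}.'' Those computations are not available for ordinary powers: they rest on the decomposition $I(G)^{(n)}=\bigcap_{A}\mathfrak{p}_A^n$ together with Lemma~\ref{lem:HHZ-intersection}, and for a non-bipartite $G$ this intersection computes the symbolic power, which genuinely differs from $I(G)^n$ (for instance $xyz\in I(C_3)^{(2)}\setminus I(C_3)^2$). Copying those displays would therefore prove statements about the wrong ideal. The repair is immediate and already in your hands: each of these four graphs contains an induced triangle, so Lemma~\ref{lem:HHZ-Scarf-regularpowers} plus the non-Scarfness of $I(C_3)^n$ (from \cite[Theorem~8.3]{FHHM24}) dispatches all of them --- or, better still, cite \cite[Theorem~8.3]{FHHM24} once for all connected graphs and delete the connected analysis entirely, as the paper does.
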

\begin{proof}
    The result follows from \cite[Theorem 8.3]{FHHM24} if $G$ is connected. So, now we can assume that $G$ is not connected. 

    Suppose that one of the above scenarios occurs. Then $G$ is bipartite, and thus $I(G)^n=I(G)^{(n)}$ is Scarf by Lemma~\ref{lem:bipartite-symbolic} and Theorem~\ref{thm:Scarf-symbolic-powers}.

    Conversely, suppose that $I(G)^n$ is Scarf. Due to Lemma~\ref{lem:bipartite-symbolic}, Theorem~\ref{thm:Scarf-symbolic-powers} and Lemma~\ref{lem:HHZ-Scarf-regularpowers}, $G$ does not contain the disjoint union of three edges as an induced subgraph. Therefore, $G$ has exactly two connected components. Because of Lemma~\ref{lem:HHZ-Scarf-regularpowers}
    and \cite[Theorem 8.3]{FHHM24}, each connected component of $G$ is either an edge or a path of length $2$. The result then follows from Lemma~\ref{lem:bipartite-symbolic} and Theorem~\ref{thm:Scarf-symbolic-powers}.
\end{proof}

\section{Cover ideals of graphs}\label{sec:cover ideals}

The goal of this section is to characterize graphs $G$ such that $J(G)$ is Scarf.

For any vertex $v$ of $G$, let $N(v)$, called the \emph{open neighborhood} of $v$, denote the set of vertices adjacent to $v$, and $N[v]$, called the \emph{closed neighborhood} of $v$, denote the set $N(v)\cup \{v\}$. We first study when two generators of $J(G)$ form an edge in its Scarf complex.

\begin{lemma}\label{lem:Scarf edge for cover ideals}
    Let $\{m_1, m_2\}$ be an edge of the Scarf complex of the cover ideal $J(G)$. Let $x$ and $y$ be two vertices such that $x\in \supp(m_1)\setminus \supp(m_2)$ and $y\in \supp(m_2)\setminus \supp(m_1)$. Then $xy\in I(G)$.
\end{lemma}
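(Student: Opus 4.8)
The plan is to argue by contraposition. Assuming $xy \notin E(G)$, I will produce a third minimal generator of $J(G)$ dividing $\lcm(m_1, m_2)$, which destroys the uniqueness of the label $\lcm(m_1, m_2)$ and hence shows that $\{m_1, m_2\}$ is not a Scarf face. Write $V_1$ and $V_2$ for the minimal vertex covers corresponding to $m_1$ and $m_2$, so that $\supp(\lcm(m_1, m_2)) = V_1 \cup V_2$. A minimal generator $m_3$ of $J(G)$, with associated minimal vertex cover $V_3$, divides $\lcm(m_1, m_2)$ precisely when $V_3 \subseteq V_1 \cup V_2$; in that case $\lcm(m_1, m_2, m_3) = \lcm(m_1, m_2)$, so the faces $\{m_1, m_2\}$ and $\{m_1, m_2, m_3\}$ carry the same label and $\{m_1,m_2\}$ fails to be Scarf. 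It therefore suffices to build a minimal vertex cover $V_3 \subseteq V_1 \cup V_2$ with $V_3 \neq V_1, V_2$.

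The central step is the claim that, when $xy \notin E(G)$, the set $W \coloneqq (V_1 \cup V_2) \setminus \{x, y\}$ is itself a vertex cover of $G$. Here I will use the standard fact that the complement of a vertex cover is an independent set, so both $V(G) \setminus V_1$ and $V(G) \setminus V_2$ are independent; note also that $x \in V_1 \setminus V_2$ lies in $V(G) \setminus V_2$, while $y \in V_2 \setminus V_1$ lies in $V(G) \setminus V_1$. To verify the claim I would take an arbitrary edge $\{a, b\}$ and suppose for contradiction that neither endpoint lies in $W$, i.e. both endpoints lie in $\{x, y\} \cup \big(V(G) \setminus (V_1 \cup V_2)\big)$. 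A short case analysis then gives a contradiction: if both endpoints avoid $V_1 \cup V_2$ they lie in the independent set $V(G) \setminus V_1$; if one endpoint is $x$ and the other avoids $V_1 \cup V_2$, then both lie in the independent set $V(G) \setminus V_2$; symmetrically for $y$ and $V(G) \setminus V_1$; and the only remaining possibility is the edge $\{x, y\}$ itself, which is excluded by hypothesis.

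Given the claim, I will shrink $W$ to a minimal vertex cover $V_3 \subseteq W \subseteq V_1 \cup V_2$. Since $x \notin W$ but $x \in V_1$, and $y \notin W$ but $y \in V_2$, the cover $V_3$ differs from both $V_1$ and $V_2$. Its monomial $m_3$ is then a minimal generator of $J(G)$ dividing $\lcm(m_1, m_2)$, contradicting the assumption that $\{m_1, m_2\}$ is a Scarf edge, and completing the contrapositive.

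The main obstacle I anticipate is packaging the verification that $W$ is a vertex cover cleanly: the whole argument hinges on correctly placing $x$ and $y$ into the appropriate complementary independent sets $V(G)\setminus V_2$ and $V(G)\setminus V_1$, after which the edge-by-edge check is routine. The remaining steps are a straightforward translation between the combinatorics of minimal vertex covers and the divisibility relations among labels in the Scarf complex, as recorded in the preliminaries.
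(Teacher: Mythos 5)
Your proposal is correct and follows essentially the same route as the paper: both argue by contradiction, producing a third minimal vertex cover contained in $V_1 \cup V_2$ that avoids $x$ and $y$, whose monomial then divides $\lcm(m_1,m_2)$ and contradicts the uniqueness of the label of $\{m_1,m_2\}$. The only difference is cosmetic: you take the intermediate cover to be $(V_1\cup V_2)\setminus\{x,y\}$ and verify the covering property via independence of the complements of $V_1$ and $V_2$, while the paper uses the smaller set $\bigl(\supp(m_1)\setminus(N[x]\cup N[y])\bigr)\cup N(x)\cup N(y)$ together with the observations $N(x)\subseteq \supp(m_2)$ and $N(y)\subseteq \supp(m_1)$; both constructions do the same job.
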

\begin{proof}
    Assume for a contradiction $\{x,y\}$ is not an edge of $G$. Let $W=N[x]\cup N[y]$. Let $Z=\supp(m_1)\setminus W$. Observe that $C:=Z\cup N(x)\cup N(y)$ is a vertex cover of $G$ because $\{x,y\}$ is not an edge of $G$. Let $m'$ be the monomial corresponding to $C$. Note that neither $x$ nor $y$ divides $m'$. Since every vertex cover can be reduced to a minimal one, there exists a minimal generator $m_3$ of $J(G)$ which divides $m'$. Now, since $x\notin \supp(m_2)$ we have $N(x)\subseteq \supp(m_2)$. Similarly, $N(y)\subseteq \supp(m_1)$. It follows that $m_3$ divides $\lcm(m_1,m_2)$, which is a contradiction.
\end{proof}

It turns out that the Scarf complex of a cover ideal is of low dimension.

\begin{lemma}\label{lem:scarf-dim-1}
    The Scarf complex of a cover ideal is of dimension at most $1$.
\end{lemma}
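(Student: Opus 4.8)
The plan is to show that the Scarf complex of $J(G)$ contains no $2$-dimensional face, i.e., no triple $\{m_1, m_2, m_3\}$ of minimal generators can have a unique label $\lcm(m_1, m_2, m_3)$. Suppose for contradiction that $\{m_1, m_2, m_3\}$ is a Scarf face, corresponding to minimal vertex covers $V_1, V_2, V_3$ of $G$. Since this is a Scarf face, each of its sub-faces must also be Scarf, so in particular each pair $\{m_i, m_j\}$ is a Scarf edge and the label $\lcm(m_1, m_2, m_3)$ is distinct from all the pairwise lcms. The combinatorial content I want to exploit is that the support of $\lcm(m_i, m_j)$ is $V_i \cup V_j$, and the support of $\lcm(m_1, m_2, m_3)$ is $V_1 \cup V_2 \cup V_3$.

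The key step will be to produce a \emph{fourth} minimal generator $m_4$ whose support is contained in $V_1 \cup V_2 \cup V_3$ but which cannot equal any of $m_1, m_2, m_3$, thereby witnessing that the label $\lcm(m_1,m_2,m_3)$ is not unique and contradicting the Scarf condition. The natural candidate is the monomial associated to a cover built from the three covers: for each vertex $x$, one wants to decide its multiplicity in a new candidate lcm and then find a minimal cover dividing it. I would look at the exponent vector of $\lcm(m_1,m_2,m_3)$ and try to ``rearrange'' the three covers. A clean approach is to first apply the pairwise structure from Lemma~\ref{lem:Scarf edge for cover ideals}: for any pair and any $x \in \supp(m_i)\setminus\supp(m_j)$, $y \in \supp(m_j)\setminus\supp(m_i)$, the edge $xy$ lies in $I(G)$. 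Using this for all three pairs should heavily constrain how the symmetric differences interact, and I expect that one can then build a vertex cover using the ``majority'' or a suitable selection among $V_1, V_2, V_3$ at each vertex, reduce it to a minimal cover $m_4$ dividing $\lcm(m_1,m_2,m_3)$, and check $m_4 \notin \{m_1,m_2,m_3\}$.

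The hard part will be constructing $m_4$ so that it is genuinely new rather than accidentally coinciding with one of the $m_i$. Concretely, the obstacle is verifying both that the candidate set is a vertex cover (covering every edge, using the pairwise non-edge/edge constraints) and that it is distinct from each $m_i$ while still dividing the triple lcm. I anticipate needing a careful case analysis driven by the symmetric differences $\supp(m_i)\setminus\supp(m_j)$: pick a vertex $x$ lying in exactly one of the three covers (which must exist since the three generators are distinct and no pairwise label coincides with the triple label), and engineer $m_4$ to omit $x$ while still covering all edges incident to $x$ by including $N(x)$, borrowing these neighbors from the other two covers as guaranteed by Lemma~\ref{lem:Scarf edge for cover ideals}. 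Once $m_4$ is exhibited as a minimal cover dividing $\lcm(m_1,m_2,m_3)$ and different from the three original generators, the label fails to be unique, so $\{m_1,m_2,m_3\}$ is not a Scarf face, proving the dimension bound.
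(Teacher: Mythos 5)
Your setup is correct and matches the paper's: since $\{m_1,m_2,m_3\}$ is a Scarf face, its label differs from every pairwise label, so for each $i$ there is a vertex $x_i\in\supp(m_i)\setminus\supp(m_jm_k)$ lying in exactly one of the three covers, and each pair $\{m_i,m_j\}$ is a Scarf edge to which Lemma~\ref{lem:Scarf edge for cover ideals} applies. From there, however, your argument has a genuine gap: everything rests on producing a fourth minimal generator $m_4$ dividing $\lcm(m_1,m_2,m_3)$ and distinct from $m_1,m_2,m_3$, and this construction is never carried out --- you explicitly defer it (``I anticipate needing a careful case analysis''). The natural candidate you sketch, reducing $(V_1\setminus\{x_1\})\cup N(x_1)$ to a minimal cover (legitimate, since $x_1\notin V_2\cup V_3$ forces $N(x_1)\subseteq V_2\cap V_3$), only guarantees $m_4\neq m_1$; nothing in your outline prevents the resulting minimal cover from being $V_2$ or $V_3$, and ruling that out is precisely the unresolved hard part. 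As written, the proposal is a plan rather than a proof.

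The irony is that the ingredients you have already assembled finish the proof immediately, with no fourth generator needed: by Lemma~\ref{lem:Scarf edge for cover ideals} applied to the Scarf edge $\{m_1,m_2\}$, with $x_1\in\supp(m_1)\setminus\supp(m_2)$ and $x_2\in\supp(m_2)\setminus\supp(m_1)$, the pair $\{x_1,x_2\}$ is an edge of $G$. But by construction neither $x_1$ nor $x_2$ lies in $\supp(m_3)$, so the vertex cover $V_3$ misses both endpoints of this edge --- a contradiction. This is exactly the paper's proof: it turns the covering property of the \emph{third} cover into the contradiction, whereas you try to turn non-uniqueness of the label into the contradiction. The former route is already complete given Lemma~\ref{lem:Scarf edge for cover ideals}; the latter still requires the missing distinctness argument, so you should either supply it or abandon the $m_4$ detour in favor of the direct contradiction.
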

\begin{proof}
    Assume for a contradiction $\{m_1, m_2, m_3\}$ is a $2$-dimensional face of the Scarf complex of $J(G)$. Since $m_i$ does not divide $\lcm(m_j,m_k)$ for all distinct $i,j$ and $k$, it follows that there exist variables $x_1$, $x_2$ and $x_3$ such that $x_1\in \supp(m_1)\setminus \supp(m_2m_3)$, $x_2\in \supp(m_2)\setminus \supp(m_1m_3)$ and $x_3\in \supp(m_3)\setminus \supp(m_1m_2)$. Lemma~\ref{lem:Scarf edge for cover ideals} implies that $\{x_1,x_2\}$ is an edge of $G$. On the other hand, neither $x_1$ nor $x_2$ divides $m_3$. This contradicts the fact that the support of $m_3$ is a vertex cover of $G$.
    \end{proof}

Due to a well-known theorem of Fröberg \cite{Fro90}, $\reg(S/I(G))=1$ if and only if $G$ is co-chordal. Also, by a well-known theorem of Terai \cite{Terai99}, we have $\pd (J(G))=\reg(S/I(G))$ for any graph $G$. Therefore, the above lemma implies that if a cover ideal of a graph is Scarf, then the graph must be co-chordal. The above lemma thus allows us to convert the algebraic property of cover ideal being Scarf into a purely graph-theoretic property. We therefore assume that $G$ is co-chordal for the rest of the section. 

We now state a main theorem of this section, fully characterizing Scarf cover ideals using the existence of some of the edges of its Scarf complex. For an ideal $I$, we denote by $\mu(I)$, the minimum number of generators of $I$.

\begin{theorem}\label{thm:Scarf-cover-ideals}
    Let $G$ be a graph. Then the following are equivalent.
    \begin{enumerate}
        \item $J(G)$ is Scarf.
        \item $G$ is co-chordal and the Scarf complex of $J(G)$ is connected.
        \item $G$ is co-chordal and there are exactly $\mu(J(G))-1$ pairs of minimal vertex covers $(V_1,V_2)$ such that  the only minimal vertex covers of $G$ within $V_1\cup V_2$ are $V_1$ and $V_2$ themselves.
        \item $G$ is co-chordal and there are at least $\mu(J(G))-1$ pairs of minimal vertex covers $(V_1,V_2)$ such that  the only minimal vertex covers of $G$ within $V_1\cup V_2$ are $V_1$ and $V_2$ themselves.
    \end{enumerate}
\end{theorem}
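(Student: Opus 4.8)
The plan is to prove the cyclic chain of implications $(1)\Rightarrow(2)\Rightarrow(3)\Rightarrow(4)\Rightarrow(1)$, translating throughout between the algebraic language (minimal generators of $J(G)$, labels $\lcm$) and the combinatorial language (minimal vertex covers, their unions). The dictionary is standard: the minimal generators of $J(G)$ correspond bijectively to the minimal vertex covers of $G$, and for two generators $m_i,m_j$ with associated covers $V_i,V_j$ we have $\supp(\lcm(m_i,m_j))=V_i\cup V_j$. Hence an edge $\{m_1,m_2\}$ of the Scarf complex exactly records a unique $\lcm$-label on $V_1\cup V_2$, which by Scarf-uniqueness means that no third minimal generator divides $\lcm(m_1,m_2)$; combinatorially, the only minimal vertex covers contained in $V_1\cup V_2$ are $V_1$ and $V_2$ themselves. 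This identification is the backbone of all four implications.

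For $(1)\Rightarrow(2)$: if $J(G)$ is Scarf, then its Scarf complex is its minimal free resolution; in particular the Scarf complex supports a resolution of $S/J(G)$, so by Lemma~\ref{lem:BPS} it is acyclic, hence connected. The co-chordality was already established before the statement of the theorem (via Fröberg and Terai, using Lemma~\ref{lem:scarf-dim-1}), so I may simply invoke it. For $(2)\Rightarrow(3)$: by Lemma~\ref{lem:scarf-dim-1} the Scarf complex is a graph (a $1$-dimensional complex) on $\mu(J(G))$ vertices; a connected graph on $\mu(J(G))$ vertices that is \emph{acyclic as a simplicial complex} has exactly $\mu(J(G))-1$ edges. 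The key point here is that the Scarf complex, when it is connected, is automatically a tree: this is where I would use the recorded fact at the end of the Preliminaries that a connected pure $1$-dimensional complex is shellable, and that acyclicity of a graph is equivalent to being a tree. Concretely, the Scarf complex being a subcomplex of the minimal resolution forces its homology (equivalently its reduced Euler characteristic constraints via Betti numbers) to be tree-like; I expect the cleanest route is to observe that connectedness plus the Scarf structure yields no $1$-cycles, so the edge count is exactly $\mu(J(G))-1$, and then translate each Scarf edge into the stated pair of minimal vertex covers.

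The implications $(3)\Rightarrow(4)$ is trivial (``exactly'' implies ``at least''), so the real content lies in $(4)\Rightarrow(1)$. Here I assume co-chordality and that there are at least $\mu(J(G))-1$ such pairs $(V_1,V_2)$; each such pair is a distinct Scarf edge by the dictionary above, so the Scarf complex contains at least $\mu(J(G))-1$ edges on $\mu(J(G))$ vertices. I then want to conclude that $J(G)$ is Scarf. The strategy is to use Lemma~\ref{lem:BPS}: I must show that for every monomial $w$ the restricted complex $(\text{Scarf})_{\le w}$ is empty or acyclic. Since co-chordality gives $\pd(J(G))=\reg(S/I(G))=1$, the minimal free resolution of $S/J(G)$ has length $2$, i.e.\ projective dimension $1$ for $J(G)$, so the minimal resolution is entirely determined by its $0$th and $1$st syzygies; a linear-algebra/Euler-characteristic count of Betti numbers then shows $\beta_1(J(G))=\mu(J(G))-1$. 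The Scarf complex already furnishes at least this many $1$-faces and is a subcomplex of the minimal resolution, so the Scarf $1$-faces account for \emph{all} first syzygies, forcing the Scarf complex to equal the minimal resolution.

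\textbf{Main obstacle.} The crux is $(4)\Rightarrow(1)$: I must rule out the possibility that the $\mu(J(G))-1$ guaranteed Scarf edges fail to connect the complex (e.g.\ form a disconnected graph with an independent cycle), which would make the Scarf complex strictly smaller than the minimal resolution. The decisive input is that $\pd(J(G))=1$ bounds the total first Betti number by $\mu(J(G))-1$, so the Scarf complex cannot have \emph{more} than $\mu(J(G))-1$ edges, and combined with the hypothesis of \emph{at least} that many it has exactly $\mu(J(G))-1$ edges; since a graph on $\mu(J(G))$ vertices with exactly $\mu(J(G))-1$ edges that arises as a subcomplex of an acyclic minimal resolution must itself be acyclic (a tree), connectedness follows and Lemma~\ref{lem:BPS} applies. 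Making this Betti-number count airtight — in particular justifying $\beta_1(J(G))=\mu(J(G))-1$ purely from $\pd(J(G))=1$ and the fact that the Scarf complex embeds in the minimal resolution — is the step I expect to require the most care.
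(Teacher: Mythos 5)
Your proposal takes essentially the same route as the paper's proof: the same dictionary identifying Scarf edges of $J(G)$ with pairs of minimal vertex covers $(V_1,V_2)$ whose union contains no third minimal cover, co-chordality via Lemma~\ref{lem:scarf-dim-1} together with the Fr\"oberg--Terai argument, and the decisive count $\beta_1(J(G))=\mu(J(G))-1$ (from $\pd(J(G))=1$ and the fact that an ideal has rank one; the paper cites \cite[Corollary 1.4.6]{BH98}), which combined with the hypothesis of at least $\mu(J(G))-1$ Scarf edges and the fact that the Scarf complex is a subcomplex of the minimal resolution forces the two to coincide.

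Two non-load-bearing wrinkles are worth flagging. First, in $(2)\Rightarrow(3)$ your appeal to shellability proves nothing: a cycle is shellable but not acyclic, so ``connected pure $1$-dimensional implies shellable'' cannot show the connected Scarf complex is a tree; the correct reason is the edge bound (number of Scarf edges) $\leq \beta_1(J(G)) = \mu(J(G))-1$ that you state elsewhere, which together with connectedness pins the edge count at exactly $\mu(J(G))-1$. Second, the detour through Lemma~\ref{lem:BPS} and the worry about disconnected-graph-with-a-cycle configurations in $(4)\Rightarrow(1)$ is unnecessary: once the rank count shows the Scarf complex equals the minimal free resolution, $J(G)$ is Scarf \emph{by definition}, with no need to verify acyclicity of restrictions --- this is precisely how the paper concludes.
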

\begin{proof}
    Set $\mu(J(G))=n$. For any set $W$ of vertices of $G$, let $m_W$ denote the monomial $\prod_{u\in W}u$. We first show that a pair of minimal vertex covers $(V_1,V_2)$ satisfies the property that the only minimal vertex covers of $G$ within $V_1\cup V_2$ are $V_1$ and $V_2$ themselves if and only if $\{m_{V_1},m_{V_2}\}$ is Scarf. Indeed, $\{m_{V_1},m_{V_2}\}$ is Scarf if and only if there exists no minimal vertex cover $V\notin \{V_1,V_2\}$ such that $m_V\mid \lcm(m_{V_1},m_{V_2})$, which is equivalent to the above property, as desired.
    
    To prove that $(1)\implies (2)$, assume that $J(G)$ is Scarf. Then $\pd(J(G))=1$ by the preceding result. Since the Scarf complex is a subcomplex of the minimal resolution, it is also of dimension 1, at most. By Lemma~\ref{lem:BPS}, this Scarf complex is a tree, which is (2). 
    
    $(2)\implies (3)$ follows similarly since a tree on $n$ vertices has exactly $n-1$ edges.

    It is clear that $(3)\implies (4)$. Finally, we show $(4)\implies (1)$. Assume that $G$ is co-chordal and there are at least $\mu(J(G))-1$ pairs of minimal vertex covers $(V_1,V_2)$ such that  the only minimal vertex covers of $G$ within $V_1\cup V_2$ are $V_1$ and $V_2$ themselves. Since $G$ is co-chordal, $\pd(J(G)) = 1$ by Terai's and Fr\"oberg's theorems. Thus the minimal resolution of $J(G)$ is of the form
    \[
    Q^{b} \to Q^a.
    \]
    It is well-known that $a=\mu(J(G)) = n$. By \cite[Corollary 1.4.6]{BH98}, $b-a+1=0$, or $b=n-1$. Thus the minimal resolution of $J(G)$ is 
    \[
    Q^{n-1} \to Q^n.
    \]
    By the second assumption, the Scarf complex of $J(G)$ contains the following complex
    \[
    Q^{n-1} \to Q^n
    \]
    and therefore is at least as big as the minimal resolution. On the other hand, the minimal resolution always contains the Scarf complex. Therefore the two coincide, and in particular $J(G)$ is Scarf, as desired.
\end{proof} 

Unlike ordinary and symbolic powers of edge ideals, there is a large class of graphs whose cover ideal is Scarf. In Theorem~\ref{thm:Scarf-cover-ideals} we gave a combinatorial description of when $J(G)$ is Scarf. In the rest of this section, we will describe Scarfness of $J(G)$ more explicitly when $G$ has some special properties. We remark that $G$ needs to be co-chordal for $J(G)$ to be Scarf. In particular, $G$ does not contain $C_k$ as an induced subgraph for any $k\geq 5$. Thus an induced cycle of $G$, if exists, must be either a triangle or a $4$-cycle. We will investigate the two extremes: when $G$ is chordal (i.e., all induced cycles of $G$, if any, are triangles), or $G$ is a bipartite graph (i.e., all induced cycles of $G$, if exist, are $4$-cycles).

\begin{theorem}\label{thm:Scarf-cover-ideal-chordal}
    Let $G$ be a chordal graph. Then $J(G)$ is Scarf if and only if $V(G)$ can be partitioned into a clique $A$ and an independent set $B$ such that
    \begin{enumerate}
        \item any vertex in $A$ is adjacent to a vertex in $B$;
        \item no two sets in the collection $\{ N(x) \cap B \colon x\in A \}$ contain one another. 
    \end{enumerate}
\end{theorem}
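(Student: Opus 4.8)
The plan is to use the combinatorial criterion from Theorem~\ref{thm:Scarf-cover-ideals}, which says that for a co-chordal graph $G$, the cover ideal $J(G)$ is Scarf if and only if its Scarf complex is connected, equivalently if and only if there are at least $\mu(J(G))-1$ \emph{Scarf pairs} of minimal vertex covers, i.e., pairs $(V_1,V_2)$ such that the only minimal vertex covers contained in $V_1\cup V_2$ are $V_1$ and $V_2$ themselves. Since a chordal graph is trivially co-chordal when it has the stated structure (and we will need to check co-chordality anyway), the whole problem reduces to translating the clique/independent-set partition condition into the existence of enough Scarf pairs. The key observation is that when $V(G) = A \sqcup B$ with $A$ a clique and $B$ independent, the minimal vertex covers of $G$ have a very rigid form: every minimal vertex cover must contain all but at most one vertex of the clique $A$ (since every edge inside $A$ must be covered), and the choice of which vertex $x\in A$ is omitted determines the cover as $(A\setminus\{x\}) \cup (N(x)\cap B)$, provided this is actually minimal.

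First I would set up the partition direction. Assuming $V(G) = A\sqcup B$ satisfies (1) and (2), I would show that the minimal vertex covers of $G$ are exactly the sets $W_x := (A\setminus \{x\}) \cup (N(x)\cap B)$ for $x\in A$. Covering all clique-edges forces a cover to contain at least $|A|-1$ vertices of $A$; condition (1) ensures $N(x)\cap B\neq\varnothing$ so that omitting $x$ genuinely forces including $N(x)\cap B$, and condition (2) (no containment among the sets $N(x)\cap B$) is precisely what guarantees these $W_x$ are pairwise incomparable and hence all minimal, giving $\mu(J(G)) = |A|$. I would then verify co-chordality: since $A$ is a clique and $B$ is independent, any induced cycle uses at most two vertices of $B$ nonadjacently, which together with chordality rules out induced $C_k$ for $k\geq 5$ and forces the long-induced-cycle obstruction away — more directly, chordal graphs are co-chordal exactly when they contain no induced $2K_2$, and the clique-plus-independent-set structure prevents an induced pair of disjoint edges. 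The heart of the argument is then to exhibit $|A|-1$ Scarf pairs: I would order $A = \{a_1,\dots,a_k\}$ and show that each consecutive pair $(W_{a_i}, W_{a_{i+1}})$, or more robustly the pairs forming a spanning tree on the covers, satisfies the Scarf condition, using that $W_{a_i}\cup W_{a_{i+1}} = A \cup (N(a_i)\cap B) \cup (N(a_{i+1})\cap B)$ contains no third cover $W_{a_j}$ because $W_{a_j}$ would require $N(a_j)\cap B \subseteq N(a_i)\cap B \cup N(a_{i+1})\cap B$ while also omitting $a_j$, and incomparability (2) blocks this.

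For the converse I would assume $J(G)$ is Scarf and reconstruct the partition. By Lemma~\ref{lem:scarf-dim-1} and Fr\"oberg's/Terai's theorems $G$ is co-chordal, and being chordal it is both; the standard structural consequence is that $G$ has no induced $2K_2$, so its complement is chordal and triangle-free hence a forest — this should pin down that the non-neighbors relation is tree-like and let me define $A$ as a maximal clique and $B=V(G)\setminus A$. I would argue $B$ must be independent (any edge in $B$ together with the clique structure would produce a forbidden induced subgraph or violate $\pd(J(G))=1$), that condition (1) holds because an isolated-in-$B$ vertex or a clique vertex with no $B$-neighbor would either be removable or would collapse two covers into one, and that condition (2) is forced because a containment $N(x)\cap B \subseteq N(y)\cap B$ would make $W_y \subseteq W_x$, contradicting minimality/distinctness of generators.

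The main obstacle I anticipate is the converse direction, specifically extracting the clique/independent-set partition from the mere hypotheses that $G$ is chordal and that $J(G)$ is Scarf. Producing enough Scarf pairs in the forward direction is a bookkeeping exercise with the explicit covers $W_x$, but going backwards requires showing that Scarfness of $J(G)$ forces the rather special ``split-graph-like'' structure $A\sqcup B$ in the first place; the likely mechanism is that connectivity of the Scarf complex together with $\mu(J(G)) = |A|$-type counting and the tree structure of $\overline{G}$ leaves no room for any configuration other than a clique plus an independent set, but making this rigorous — especially ruling out edges inside $B$ and verifying both combinatorial conditions simultaneously — is where the real care is needed.
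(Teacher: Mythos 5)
Your high-level strategy (reduce to Theorem~\ref{thm:Scarf-cover-ideals}, describe the minimal vertex covers coming from the split partition, count Scarf pairs, and use the split-graph structure of chordal co-chordal graphs for the converse) is the same as the paper's, but there is a fatal error at the center of it: your list of minimal vertex covers is incomplete. Besides the covers $W_x=(A\setminus\{x\})\cup(N(x)\cap B)$, the clique $A$ itself is a minimal vertex cover --- it covers every edge because $B$ is independent, and by condition (1) deleting any $x\in A$ leaves the edges from $x$ to $N(x)\cap B$ uncovered. Hence $\mu(J(G))=|A|+1$, not $|A|$. This breaks the heart of your forward direction: for any $i\neq j$ we have $A\subseteq W_{a_i}\cup W_{a_j}$ (the first set contains $A\setminus\{a_i\}$ and the second contains $a_i$), so the third minimal cover $A$ lies inside the union and \emph{no} pair $(W_{a_i},W_{a_j})$ is ever Scarf --- these are exactly the pairs the paper explicitly rules out. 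The true Scarf pairs are $(A,W_x)$ for $x\in A$: the Scarf complex is a star centered at $A$, and $\{A,W_x\}$ is Scarf if and only if $N(x')\cap B\not\subseteq N(x)\cap B$ for all $x'\neq x$, which is precisely where condition (2) enters. Your proposed spanning path through the $W$'s would produce zero Scarf edges. (Separately, your blocking argument is also unsound: condition (2) forbids containments between two sets $N(a_i)\cap B$, but does not forbid the union containment $N(a_j)\cap B\subseteq (N(a_i)\cap B)\cup(N(a_{i+1})\cap B)$.)

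The converse has further problems. Chordal plus co-chordal does not make $\overline{G}$ triangle-free: a triangle in $\overline{G}$ is just an independent set of size $3$ in $G$, so for instance $G=K_{1,3}$ is chordal, co-chordal, and has Scarf cover ideal, yet $\overline{G}$ contains a triangle; thus ``$\overline{G}$ is a forest'' is false and cannot pin down the structure. Moreover, choosing an \emph{arbitrary} maximal clique as $A$ need not leave $B$ independent: in the paw graph (triangle $abc$ with a pendant vertex $d$ attached to $a$), the maximal clique $A=\{a,d\}$ leaves the edge $bc$ inside $B$. The paper instead invokes the Foldes--Hammer theorem (chordal $\cap$ co-chordal $=$ split graph) to get \emph{some} partition $A\sqcup B$, and then enforces condition (1) by moving any $x\in A$ with $N(x)\cap B=\varnothing$ into $B$, which keeps $B$ independent precisely because $x$ has no neighbors there. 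Finally, your mechanism for forcing (2) is wrong: a containment $N(x)\cap B\subseteq N(y)\cap B$ does not give $W_y\subseteq W_x$ (the $W$'s are always pairwise incomparable, since $x\in W_y\setminus W_x$); what it actually gives is $W_x\subseteq A\cup(N(y)\cap B)$, which destroys the Scarf edge $\{A,W_y\}$ and drops the number of Scarf pairs below $\mu(J(G))-1$, contradicting Theorem~\ref{thm:Scarf-cover-ideals}. So while the skeleton of your plan matches the paper, the identification of the minimal covers, of the Scarf pairs, and of the structural input for the converse all need to be corrected before the argument goes through.
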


\begin{proof}
    First, we will show that if $G$ is chordal and co-chordal, then $V(G)$ can be partitioned into a clique $A$ and an independent set $B$ where condition (1) holds. Indeed, by a theorem of Foldes and Hammer \cite{FH77}, $V(G)$ can be partitioned into a clique $A$ and an independent set $B$. Now suppose that (1) does not hold, i.e., there exists a vertex in $A$ that is not adjacent to any vertex in $B$. Then we replace $A$ with $A\setminus \{x\}$ and $B$ with $B\cup \{x\}$ where $x$ is any vertex in $A$. We note that under this change, $A$ is still a clique and $B$ is still an independent set. In other words, such a partition always exists.

    Back to the main problem, we first assume that $J(G)$ is Scarf. Then $G$ is co-chordal, and since $G$ is already chordal,  there exists a partition of $V(G)$ into $A$ and $B$ that satisfies condition (1). Set $A=\{x_1,\dots, x_n\}$. Then by condition (1), it is straightforward that
    \[
    J(G)= (x_1\cdots x_n, \quad \widehat{x_1} \cdots x_n(\prod_{y\in N(x_1)\cap B} y),\quad \dots, \quad x_1\cdots \widehat{x_n} (\prod_{y\in N(x_n)\cap B} y)).
    \]
    To make the notations easy, set $m_i= x_1\cdots \widehat{x_i}\cdots x_n (\prod_{y\in N(x_i)\cap B} y)$. Then, \[J(G)=(x_1\cdots x_n, m_1,\dots , m_n).\]
    We will count the number of possible Scarf edges of $J(G)$. Since $x_1\cdots x_n \mid \lcm(m_i,m_j)$ for any $i\neq j$, the edge $\{m_i,m_j\}$ is not Scarf. Thus there are at most $n= \mu(J(G))-1$ possible Scarf edges for the ideal $J(G)$. By Theorem~\ref{thm:Scarf-cover-ideals}, $J(G)$ is Scarf if and only if these $n$ edges are indeed Scarf. In other words, we must have
    \[
    m_i \nmid \lcm(x_1\cdots x_n, m_j)
    \]
    for any $i\neq j$. By writing down the monomials explicitly, this is the same as
    \[
    \prod_{y\in N(x_i)\cap B} y \nmid \prod_{y\in N(x_j)\cap B} y
    \]
    for any $i\neq j$. This is equivalent to condition (2), as desired.

    Conversely, assume that $V(G)$ can be partitioned into a clique $A$ and an independent set $B$ that satisfy the two conditions $(1)$ and $(2)$. Then it is clear that $J(G)$ is of the form above and again, by Theorem~\ref{thm:Scarf-cover-ideals}, is Scarf.
\end{proof}

Next, we will fully characterize bipartite graphs whose cover ideal is Scarf. A co-chordal bipartite graph is also known as a Ferrers graph (see, e.g., \cite[Theorem 4.2]{CN08}). We recall the definition in \cite{CN08}: a bipartite graph $G$ on the two disjoint sets of vertices $X=\{x_1,\dots, x_n\}$ and $Y=\{y_1,\dots, y_m\}$ of $G$ is called a \emph{Ferrers graph} if up to relabelling, we have
    \[
    N(x_1)\subseteq N(x_2) \subseteq \cdots \subseteq N(x_n)=Y.
    \] 
Since neighbors of $x_i$ can only be in $Y$, if $G$ is a Ferrers graph, we also have
\[
X= N(y_1)\supseteq N(y_2) \supseteq \cdots \supseteq N(y_m).
\]
We will explicitly describe all the minimal monomial generators of $J(G)$ for a Ferrers graph $G$.

\begin{lemma}\label{lem:Ferrers-MVC-1}
    Let $G$ be a Ferrers graph on the two sets of vertices $X=\{x_1,\dots, x_n\}$ and $Y=\{y_1,\dots, y_m\}$ where
    \[
    N(x_1)\subseteq N(x_2) \subseteq \cdots \subseteq N(x_n)=Y.
    \]
    Then the collection of minimal elements of 
    \[
    \Omega\coloneqq \{ N(y_1) = X,\quad \{y_1\} \cup N(y_2),\quad \{y_1,y_2\} \cup N(y_3), \quad \dots,\quad \{y_1,\dots, y_m\} =Y \}
    \]
    is exactly the collection of all minimal vertex covers of $G$.
\end{lemma}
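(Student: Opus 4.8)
The plan is to prove the statement in three steps: (i) every set in $\Omega$ is a vertex cover of $G$; (ii) every minimal vertex cover of $G$ lies in $\Omega$; and (iii) deduce from (i) and (ii) that the inclusion-minimal elements of $\Omega$ coincide with the minimal vertex covers. Throughout I would use the dual Ferrers chain $X=N(y_1)\supseteq N(y_2)\supseteq\cdots\supseteq N(y_m)$ recorded just above the lemma, as this is the only structural input needed. For bookkeeping I would write $W_k\coloneqq \{y_1,\dots,y_{k-1}\}\cup N(y_k)$ for $1\le k\le m+1$, with the conventions $\{y_1,\dots,y_0\}=\varnothing$ and $N(y_{m+1})=\varnothing$, so that $\Omega=\{W_1,\dots,W_{m+1}\}$ with $W_1=X$ and $W_{m+1}=Y$.

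For step (i), I would check that an arbitrary edge $\{x_i,y_j\}$ is covered by $W_k$: if $j\le k-1$ then $y_j\in W_k$, while if $j\ge k$ then $x_i\in N(y_j)\subseteq N(y_k)=W_k\cap X$. Either way the edge meets $W_k$, so each $W_k$ is a vertex cover; the only ingredient is the decreasing nesting of the $N(y_j)$.

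Step (ii) is the crux. Let $C$ be a minimal vertex cover with $C_X=C\cap X$ and $C_Y=C\cap Y$. The first and hardest task is to show $C_Y$ is an initial segment $\{y_1,\dots,y_{k-1}\}$. I would argue by exchange: if not, there are $j<j'$ with $y_j\notin C$ and $y_{j'}\in C$, and by minimality $y_{j'}$ is not redundant, so some edge $\{x_i,y_{j'}\}$ has $x_i\notin C$; then $x_i\in N(y_{j'})\subseteq N(y_j)$, so $\{x_i,y_j\}$ is an edge covered by neither endpoint, contradicting that $C$ is a cover. Once $k$ is fixed with $C_Y=\{y_1,\dots,y_{k-1}\}$, I would identify $C_X$: covering the edges at $y_j$ for $j\ge k$ forces $C_X\supseteq\bigcup_{j\ge k}N(y_j)=N(y_k)$, and conversely any $x_i\in C_X\setminus N(y_k)$ satisfies $N(x_i)\subseteq\{y_1,\dots,y_{k-1}\}=C_Y$ (if some $y_j\in N(x_i)$ had $j\ge k$, then $x_i\in N(y_j)\subseteq N(y_k)$, a contradiction), hence is redundant, against minimality. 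Therefore $C_X=N(y_k)$ and $C=W_k\in\Omega$. The conventions absorb the extreme cases $C=X$ ($k=1$) and $C=Y$ ($k=m+1$), the latter being a genuine minimal cover since $G$ has no isolated vertices.

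Finally, step (iii) is formal and I would dispatch it quickly using (i) and (ii). If $W$ is an inclusion-minimal element of $\Omega$, it is a cover by (i), and any cover strictly contained in it would contain a minimal cover, which lies in $\Omega$ by (ii), contradicting minimality within $\Omega$; thus $W$ is a minimal cover. Conversely, a minimal cover lies in $\Omega$ by (ii) and cannot properly contain another element of $\Omega$, since all such elements are covers by (i); thus it is minimal in $\Omega$. I expect the exchange argument establishing that $C_Y$ is an initial segment to be the only genuinely delicate point, after which the ``initial segment together with $N(y_k)$'' shape of a minimal cover is completely forced and the remainder is routine.
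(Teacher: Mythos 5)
Your proof is correct, but its core step takes a genuinely different route from the paper's. The paper reduces the lemma to showing that every minimal vertex cover $C$ merely \emph{contains} some member of $\Omega$ (this suffices because the members of $\Omega$ are themselves vertex covers --- a fact the paper leaves implicit --- and minimal elements of $\Omega$ cannot contain one another), and it proves this containment by a short induction: taking $t$ maximal with $y_1,\dots,y_t\in C$, either $t=m$ and $Y\subseteq C$, or $y_{t+1}\notin C$ forces $N(y_{t+1})\subseteq C$, whence $\{y_1,\dots,y_t\}\cup N(y_{t+1})\subseteq C$. Notably, that step uses only the covering property of $C$: neither minimality of $C$ nor the Ferrers nesting enters there (the nesting is needed only for the implicit claim that members of $\Omega$ are covers). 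You instead pin down each minimal cover \emph{exactly}: your exchange argument (nesting plus non-redundancy of vertices in a minimal cover) shows $C\cap Y$ is an initial segment $\{y_1,\dots,y_{k-1}\}$, and a second appeal to minimality gives $C\cap X=N(y_k)$, so $C=W_k$ on the nose. Your route is longer and invokes minimality twice where the paper needs it not at all, but it buys a sharper intermediate statement --- an explicit description of every minimal vertex cover, close in spirit to what the paper extracts later in Lemma~\ref{lem:Ferrers-MVC-2} --- and it makes explicit the two pieces of bookkeeping (that members of $\Omega$ are covers, and the formal passage from your steps (i) and (ii) to the stated equality of collections) that the paper compresses into the single phrase ``since the minimal elements of $\Omega$ do not contain one another by design.'' Both arguments are sound.
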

\begin{proof}
    It suffices to show that any minimal vertex cover $C$ of $G$ contains one of these sets, since the minimal elements of $\Omega$ do not contain one another by design. This can be done inductively. Indeed, if $y_1\notin C$, then $N(y_1)\subseteq C$ since $C$ is a vertex cover, as desired. Suppose by induction that $y_1,\dots y_t\in C$, and $y_{t+1}\notin C$. Then $N(y_{t+1})\subseteq C$ since $C$ is a vertex cover. Thus $\{y_1,\dots, y_t\}\cup N(y_{t+1})\subseteq C$, as desired. This holds for all $t\leq m-1$. The last case is when $y_1,\dots, y_m\in C$, or equivalently, $Y\subseteq C$, as desired. This concludes the proof.
\end{proof}

\begin{lemma}\label{lem:Ferrers-MVC-2}
    Let $G$ be a Ferrers graph on the two sets of vertices $X=\{x_1,\dots, x_n\}$ and $Y=\{y_1,\dots, y_m\}$ where
    \[
    N(x_1)\subseteq N(x_2) \subseteq \cdots \subseteq N(x_n)=Y.
    \]
    Let $1< k_1 < \cdots < k_l\leq m$ be indices such that
    \begin{multline*}
        N(y_1)= \cdots = N(y_{k_1-1})  \supsetneq N(y_{k_1}) = \cdots = N(y_{k_2-1})  \supsetneq\\
        N(y_{k_2})  \cdots  N(y_{k_l-1}) \supsetneq N(y_{k_l}) =\cdots = N(y_m),
    \end{multline*}
    i.e., these are exactly the indices where we have strict inclusions. Then
    \[
    \mathcal{M}\coloneqq \{ N(y_1) = X, \quad \{y_1,\dots, y_{k_1-1}\} \cup N(y_{k_1}),\quad \dots, \quad \{y_1,\dots, y_{k_l-1}\} \cup N(y_{k_l}),\quad Y  \}
    \]
    is exactly the collection of all minimal vertex covers of $G$.
\end{lemma}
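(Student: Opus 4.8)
The plan is to build directly on Lemma~\ref{lem:Ferrers-MVC-1}, which identifies the minimal vertex covers of $G$ with the inclusion-minimal elements of $\Omega$. Writing $A_t\coloneqq \{y_1,\dots,y_{t-1}\}\cup N(y_t)$ for $1\leq t\leq m$, the collection $\Omega$ consists of $A_1=X,\,A_2,\dots,A_m$ together with $Y$. So the entire task reduces to deciding which members of $\Omega$ are minimal, and I would carry this out by computing all inclusions among them, keeping the bipartite structure in view throughout: each $N(y_t)\subseteq X$ is nonempty (no isolated vertices) and $X\cap Y=\varnothing$.

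First I would record the comparisons among the $A_t$. For $s<t$, the prefix $\{y_1,\dots,y_{s-1}\}$ sits inside $\{y_1,\dots,y_{t-1}\}\subseteq A_t$, so $A_s\subseteq A_t$ holds if and only if $N(y_s)\subseteq A_t$; since $N(y_s)\subseteq X$ while $A_t\cap X=N(y_t)$, this is equivalent to $N(y_s)\subseteq N(y_t)$, hence (as $N(y_s)\supseteq N(y_t)$ for $s<t$) to $N(y_s)=N(y_t)$. In that case the inclusion is strict because $y_s\in A_t\setminus A_s$. Conversely, for $s>t$ one always has $y_t\in A_s\setminus A_t$, so $A_s\not\subseteq A_t$. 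Finally, $Y$ is incomparable to every $A_t$: the nonempty set $N(y_t)\subseteq X$ prevents $A_t\subseteq Y$, while $y_t\notin A_t$ prevents $Y\subseteq A_t$. In particular $Y$ is always a minimal element of $\Omega$, and it is genuinely distinct from every $A_t$.

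Combining these comparisons, $A_t$ fails to be minimal exactly when some $s<t$ has $N(y_s)=N(y_t)$, that is, exactly when $t$ is not the first index at which the chain $N(y_1)\supseteq\cdots\supseteq N(y_m)$ attains its current value. By the definition of the $k_i$ as the indices of strict drops, these first indices are precisely $1,k_1,\dots,k_l$. Hence the minimal $A_t$ are exactly $A_1=X,\,A_{k_1},\dots,A_{k_l}$, and together with $Y$ they constitute exactly $\mathcal{M}$. To close the argument I would note that these are pairwise incomparable: the block-starts have strictly nested (hence pairwise unequal) neighborhoods, so by the first paragraph none contains another, and $Y$ was already separated out.

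The argument is essentially bookkeeping, so the only real obstacle is to keep the disjointness $X\cap Y=\varnothing$ at the forefront of every comparison: it is exactly this that decouples the $y$-prefixes from the $X$-valued neighborhoods and collapses the relation ``$A_s\subseteq A_t$'' (for $s<t$) to the clean condition ``$N(y_s)=N(y_t)$''. The one point demanding a little care is ensuring $Y$ is treated as a separate element of $\Omega$ rather than being mistaken for some $A_t$, which again rests on $N(y_m)\neq\varnothing$.
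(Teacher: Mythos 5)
Your proof is correct and takes essentially the same route as the paper's: both reduce the statement via Lemma~\ref{lem:Ferrers-MVC-1} to identifying the minimal elements of $\Omega$, and both rest on the same containment facts (for $s<t$, $\{y_1,\dots,y_{s-1}\}\cup N(y_s)\subseteq\{y_1,\dots,y_{t-1}\}\cup N(y_t)$ exactly when $N(y_s)=N(y_t)$, and $Y$ is incomparable to the rest). Yours is simply a more explicit rendering, working out the full inclusion order on $\Omega$ where the paper compresses the incomparability checks into ``clear from the definition of our indices.''
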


\begin{proof}
    By Lemma~\ref{lem:Ferrers-MVC-1}, it suffices to show that these are exactly the minimal elements of $\Omega$, where $\Omega$ is defined as in Lemma~\ref{lem:Ferrers-MVC-1}. Indeed, if $N(y_j)=N(y_{j+1})$, then $\{y_1,\dots y_{j}\}\cup N(y_{j+1})$ is by definition not a minimal element of $\Omega$. Thus the minimal elements of $\Omega$ is indeed a subset of~$\mathcal{M}$.

    Conversely, we will show that any element in $\mathcal{M}$ is minimal. Indeed, both $X$ and $Y$  straightforwardly do not contain, or are contained in, any of the rest, or each other. Thus it suffices to show that elements in 
    \[
     \{ \{y_1,\dots, y_{k_1-1}\} \cup N(y_{k_1}),\quad  \dots, \quad \{y_1,\dots, y_{k_l-1}\} \cup N(y_{k_l})  \}
    \]
    do not contain each other. This is clear from the definition of our indices.
\end{proof}

We are now ready to fully characterize bipartite graphs whose cover ideal is Scarf. 

\begin{theorem}\label{thm:Scarf-cover-ideals-bipartite}
    Let $G$ be a bipartite graph. Then the following statements are equivalent.
    \begin{enumerate}
        \item $J(G)$ is Scarf.
        \item $G$ is co-chordal.
        \item $G$ is a Ferrers graph.
    \end{enumerate}
\end{theorem}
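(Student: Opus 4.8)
The plan is to establish the three-way equivalence by proving the two cheap links $(1)\Rightarrow(2)$ and $(2)\Leftrightarrow(3)$ and then concentrating all the work on the single substantive implication $(3)\Rightarrow(1)$. The implication $(1)\Rightarrow(2)$ is immediate from the discussion following Lemma~\ref{lem:scarf-dim-1}, which shows that any graph with a Scarf cover ideal is necessarily co-chordal. The equivalence $(2)\Leftrightarrow(3)$ is already recorded as a known fact for bipartite graphs: a bipartite graph is co-chordal if and only if it is a Ferrers graph, by \cite[Theorem~4.2]{CN08}. Hence the theorem reduces entirely to showing that the cover ideal of a Ferrers graph is Scarf.

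For $(3)\Rightarrow(1)$ I would feed the explicit enumeration of minimal vertex covers from Lemma~\ref{lem:Ferrers-MVC-2} into Theorem~\ref{thm:Scarf-cover-ideals}. List the minimal vertex covers as $M_0 = X,\,M_1,\,\dots,\,M_l,\,M_{l+1}=Y$, where $M_j = \{y_1,\dots,y_{k_j-1}\}\cup N(y_{k_j})$ for $1\le j\le l$; to streamline the endpoints, adopt the conventions $k_0 = 1$ and $k_{l+1}=m+1$ with $N(y_{m+1})\coloneqq\varnothing$, so that $M_0$ and $M_{l+1}$ also take this uniform form. Then $\mu(J(G)) = l+2$, so by part $(4)$ of Theorem~\ref{thm:Scarf-cover-ideals} it suffices to produce $l+1$ distinct pairs of minimal vertex covers $(V_1,V_2)$ whose union contains no minimal vertex cover besides $V_1$ and $V_2$. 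The natural candidates are the $l+1$ consecutive pairs $(M_j,M_{j+1})$ for $0\le j\le l$; equivalently, one can phrase the goal as showing that the Scarf complex is the path $M_0 - M_1 - \cdots - M_{l+1}$ and then invoke part $(2)$.

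The heart of the argument is verifying that each consecutive pair is good, and here the nesting $N(y_1)\supseteq\cdots\supseteq N(y_m)$ intrinsic to a Ferrers graph does all the work. For $M_j$ and $M_{j+1}$ we have $k_j < k_{j+1}$ and $N(y_{k_j})\supseteq N(y_{k_{j+1}})$, which yields the clean description $M_j\cup M_{j+1} = \{y_1,\dots,y_{k_{j+1}-1}\}\cup N(y_{k_j})$. Now suppose some cover $M_c = \{y_1,\dots,y_{k_c-1}\}\cup N(y_{k_c})$ is contained in this union. Comparing the $Y$-parts forces $k_c\le k_{j+1}$, while comparing the $X$-parts gives $N(y_{k_c})\subseteq N(y_{k_j})$, hence $k_c\ge k_j$ by the nesting. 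Since the $k$'s are exactly the indices at which strict inclusions occur, the inequalities $k_j\le k_c\le k_{j+1}$ pin $k_c$ down to $\{k_j,k_{j+1}\}$, that is $M_c\in\{M_j,M_{j+1}\}$. This gives $l+1 = \mu(J(G))-1$ good pairs and lands us exactly in the hypothesis of Theorem~\ref{thm:Scarf-cover-ideals}(4).

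I expect the main obstacle to be bookkeeping rather than anything conceptual: the two extremal covers $X = N(y_1)$ and $Y = \{y_1,\dots,y_m\}$ must be fit into the uniform template $\{y_1,\dots,y_{k_c-1}\}\cup N(y_{k_c})$ so that the comparison of $X$-parts and $Y$-parts stays airtight at the ends of the chain, where $N(y_{k_c})$ is either all of $X$ or effectively empty. The conventions $k_0=1$ and $k_{l+1}=m+1$ above are chosen precisely to absorb these degenerate cases, after which the single computation in the previous paragraph applies uniformly and the proof is complete.
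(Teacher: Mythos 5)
Your proposal is correct and takes essentially the same route as the paper: the paper's own proof is exactly the one-line combination of Lemma~\ref{lem:Ferrers-MVC-2} with Theorem~\ref{thm:Scarf-cover-ideals}, with $(1)\Rightarrow(2)$ coming from the Terai--Fr\"oberg discussion and $(2)\Leftrightarrow(3)$ from \cite{CN08}, just as you set it up. Your check that the consecutive pairs $(M_j,M_{j+1})$ in the chain of minimal vertex covers are exactly $\mu(J(G))-1$ good pairs is the detail the paper leaves implicit, and you carry it out correctly (including the endpoint conventions and the use of strict nesting of the $N(y_{k_j})$'s).
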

\begin{proof}
    This follows directly from Lemma \ref{lem:Ferrers-MVC-2} and Theorem \ref{thm:Scarf-cover-ideals}.
\end{proof}
We end the paper with a related natural problem:
\begin{problem}
    Given a positive integer $n\geq 2$, classify all graphs $G$ such that $J(G)^n$ is Scarf.
\end{problem}

\bibliographystyle{amsplain}
\bibliography{refs}

\end{document}